\newcounter{nombre}
\newtheorem{defi}{Definition}
\newtheorem{defi-prop}[defi]{Definition-Proposition}
\newtheorem{lemm}[defi]{Lemma}
\newtheorem{prop}[defi]{Proposition}
\newtheorem{theo}[defi]{Theorem}
\newtheorem{coro}[defi]{Corollary}
\newtheorem{conj}[defi]{Conjecture}
\newcommand{\kks}[2][k]{g^{(#1)}_{#2}}
\newcommand{\ks}[1]{s^{(k)}_{#1}}
\newcommand{\Pk}{\mathcal{P}_{k}}
\newcommand{\Cn}{\mathcal{C}_{k+1}}
\newcommand{\tSn}{\ti S_{k+1}/S_{k+1}}
\newcommand{\Lk}{\Lambda^{(k)}}
\newcommand{\la}{\lambda}
\newcommand{\La}{\Lambda}
\newcommand{\ka}{\kappa}
\newcommand{\ga}{\gamma}
\newcommand{\de}{\delta}
\newcommand{\DE}[1]{\de\left[#1\right]}
\newcommand{\Z}{\mathbb{Z}}
\newcommand{\al}[1]{\alpha_{P}(#1)}
\newcommand{\Rl}{R_{k+1-l}}
\newcommand{\Rbl}{R'_{\bl}}
\newcommand{\lra}{\longrightarrow}
\newcommand{\Lra}{\Longrightarrow}
\newcommand{\lmt}{\longmapsto}
\newcommand{\bdd}{\mathfrak{p}}
\newcommand{\core}{\mathfrak{c}}
\newcommand{\StoC}{\mathfrak{s}}
\newcommand{\sm}{\setminus}
\newcommand{\kconj}[1]{{#1}^{\omega_k}}
\newcommand{\ti}{\tilde}
\newcommand{\hook}[2]{\mathrm{hook}_{#1}(#2)}
\newcommand{\wcover}{\prec\hspace{-1.5mm}\cdot\hspace{1mm}}
\newcommand{\ksize}[1]{|#1|_{k+1}}
\newcommand{\Res}{\mathrm{Res}}
\newcommand{\res}{\mathrm{res}}
\newcommand{\syou}[1]{\scalebox{0.45}{\yng(#1)}}
\newcommand{\kl}[1][k]{\kks[#1]{\la}}
\newcommand{\krt}[1][k]{\kks[#1]{R_t}}
\newcommand{\kRt}{\krt}
\newcommand{\krtl}[1][k]{\kks[#1]{R_t\cup\la}}
\newcommand{\kRtl}{\krtl}
\newcommand{\RRm}{R_{t_1}\cup\dots\cup R_{t_m}}
\newcommand{\RRma}{R_{t_1}^{a_1}\cup\dots\cup R_{t_m}^{a_m}}
\newcommand{\DS}{\displaystyle}
\renewcommand{\emptyset}{\varnothing}
\newcommand{\bla}{{\bar\la}}
\newcommand{\bl}{{\bar{l}}}
\newcommand{\NP}{$(\mathsf{NP})$}
\newcommand{\Nla}{$\mathsf{(N\la)}$}
\newcommand{\minindex}{\mathop{\mathrm{minindex}}}
\newcommand{\maxti}{\max_i (t_i)}
\newcommand{\sk}[2]{
	\begin{tikzpicture}[scale=0.18]
		\foreach \li [count=\i] in {#2}{
			\foreach \j in {1,...,\li}{
				\fill[gray] (\j,\i) rectangle +(-1,-1);
			}
		}
		\foreach \li [count=\i] in {#1}{
			\foreach \j in {1,...,\li}{
				\draw (\j,\i) rectangle +(-1,-1);
			}
		}
	\end{tikzpicture}
}
\title{Factorization formulas of $K$-$k$-Schur functions I}
\author{Motoki TAKIGIKU}
\thanks{takigiku@ms.u-tokyo.ac.jp}
\address{Graduate School of Mathematical Sciences, the University of Tokyo, Japan}
\date{\today}
\begin{document}

\maketitle

\begin{abstract}
	We give some new formulas about factorizations of $K$-$k$-Schur functions $\kl$,
analogous to
the $k$-rectangle factorization formula
$\ks{R_t\cup\la}=\ks{R_t}\ks{\la}$
of $k$-Schur functions,
where $\la$ is any $k$-bounded partition and $R_t$
denotes the partition $(t^{k+1-t})$ called \textit{$k$-rectangle}.
Although a formula of the same form does not hold for $K$-$k$-Schur functions,
we can prove that $\krt$ divides $\krtl$, and in fact more generally that
$\kks{P}$ divides $\kks{P\cup\la}$ for any multiple $k$-rectangles $P=\RRma$
and any $k$-bounded partition $\la$.
We give the factorization formula of such $\kks{P}$ and
the explicit formulas of $\kks{P\cup\la}/\kks{P}$ in some cases, 
including the case where $\la$ is a partition with a single part as the easiest example.
\end{abstract}

\tableofcontents

\section{Introduction}

Let $k$ be a positive integer.
\textit{$K$-$k$-Schur functions} $\kks{\la}$
are inhomogeneous symmetric functions parametrized by
$k$-bounded partitions $\la$, 
namely by the weakly decreasing strictly positive integer sequences
$\la=(\la_1,\dots,\la_l)$, $l\in\Z_{\ge 0}$, whose terms are all bounded by $k$.
They are $K$-theoretic analogues of another family of symmetric functions called \textit{$k$-Schur functions}, which are homogeneous and also parametrized by $k$-bounded partitions.

Historically, 
$k$-Schur functions were first introduced by Lascoux, Lapointe and Morse \cite{MR1950481},
and subsequent studies led to several (conjectually equivalent) characterizations of $\ks{\la}$
such as 
the Pieri-like formula due to Lapointe and Morse \cite{MR2331242},
and Lam proved that $k$-Schur functions correspond to the Schubert basis of homology of the affine Grassmannian \cite{Lam08}.
Moreover it was shown by Lam and Shimozono that
$k$-Schur functions play a central role in the explicit description of the Peterson isomorphism
between quantum cohomology of the Grassmannian and homology of the affine Grassmannian
up to suitable localizations
\cite{MR2923177}.

These developments have analogues in $K$-theory.
Lam, Schilling and Shimozono \cite{MR2660675} characterized the $K$-theoretic 
$k$-Schur functions as the Schubert basis of the $K$-homology of the affine
Grassmannian, and Morse \cite{Morse12} investigated them from a conbinatorial
viewpoint, giving various properties including the Pieri-like
formulas using affine set-valued strips (the form using cyclically
decreasing words are also given in \cite{MR2660675}).

In this paper 
we start from this combinatorial characterization (see Definition \ref{KkSchur_def})
and show certain new factorization formulas of $K$-$k$-Schur functions.

Among the $k$-bounded partitions, 
those of the form $(t^{k+1-t})=(\underbrace{t,\dots,t}_{k+1-t})=:R_t$, $1\le t \le k$,
called {\it $k$-rectangle}, play a special role.
In particular, 
if a $k$-bounded partition has the form $R_t\cup \la$,
where the symbol $\cup$ denotes the operation of concatenating the two sequences and reordering the terms in the weakly decreasing order,
then the corresponding $k$-Schur function has the following factorization property \cite[Theorem 40]{MR2331242}:
\begin{equation}\label{intro:zero}
	\ks{R_t\cup\la} = \ks{R_t} \ks{\la}.
\end{equation}
Note that, under the bijection between the set of $k$-bounded partitions and the set of affine Grassmannian elements in the affine symmetric group,
the correspondent of the $k$-rectangle $R_i$ is congruent, 
in the extended affine Weyl group, 
to the translation $t_{-\varpi_i^\vee}$ 
by the negative of a fundamental coweight,
modulo left multiplication by the length zero elements.

It is suggested in \cite[Remark 7.4]{MR2660675} that 
the $K$-$k$-Schur functions should also possess similar properties,
including the divisibility of $\kks{R_t\cup\la}$ by $\kks{R_t}$. 
The present work is an attempt to materialize this suggestion.

We do show in Proposition \ref{P_factor} that 
$\kks{R_t}$ divides $\kks{R_t\cup\la}$ 
in the ring $\Lk=\mathbb{Z}[h_1,\dots,h_k]$,
where $h_i$ denotes the complete homogeneous symmetric functions of degree $i$,
of which the $K$-$k$-Schur functions form a basis.
However, unlike the case of $k$-Schur functions,
the quotient $\kks{R_t\cup\la}/\kks{R_t}$ is not a single term $\kks{\la}$
but, in general, a linear combination of $K$-$k$-Schur functions with leading term $\kks{\la}$,
namely in which $\kks{\la}$ is the only highest degree term.
Even the simplest case where $\la$ consists of a single part $(r)$,
$1\le r \le k$, displays this phenomenon:
we show in Theorem \ref{R_t_r} that
\begin{equation}\label{intro:one}
\kks{R_t\cup(r)} = 
\begin{cases}
	\kks{R_t}\cdot \kks{(r)} & ({\rm if}\ t<r), \\
	\kks{R_t}\cdot \left(\kks{(r)} + \kks{(r-1)} + \cdots + \kks{\emptyset}\right) & ({\rm if}\ t\ge r)
\end{cases}
\end{equation}
(actually we have $\kks{(s)} = h_s$ for $1 \le s \le k$, and $\kks{\emptyset}=h_0=1$).
So we may ask:

\vspace{2mm}
\noindent{\bf Question 1.}
{\it
	Which $\kks{\mu}$, besides $\kl$, appear in the quotient $\krtl/\krt$?
	With what cofficients?
}

\vspace{2mm}
A $k$-bounded partition can always be written in the form
$R_{t_1}\cup\dots\cup R_{t_m} \cup \la$ with 
$\la$ not having so many repetitions of any part as to form a $k$-rectangle.
In such an expression we temporarily call $\la$ the remainder, although this term is only used in the Introduction.
Proceeding in the direction of Question 1,
one ultimate goal may be to give a factorization formula in
terms of the $k$-rectangles and the remainder.
In the case of $k$-Schur functions, the straightforward factorization in (\ref{intro:zero}) above leads
to the formula $\ks{\RRm\cup\la}=\ks{R_{t_1}}\dots\ks{R_{t_m}}\ks{\la}$.
On the contrary, with $K$-$k$-Schur functions, the simplest case having a multiple $k$-rectangle,
to be shown in 
the author's following paper \cite{Takigiku},
gives
\begin{equation}\label{intro:two}
	\kks{R_t\cup R_t} = \krt \sum_{\la\subset R_t} \kl.
\end{equation}
Hence we cannot expect $\kks{R_t\cup R_t}$ to be divisible by $\krt$ twice.
Instead, upon organizing the part consisting of $k$-rectangles in the form 
$\RRma$ with $t_1 < \dots < t_m$ and $a_i \ge 1$ ($1\le i \le m$),
with $R_t^a = \underbrace{R_t\cup\dots\cup R_t}_{a}$, actually we show in Proposition \ref{P_factor} that

\centerline{$\kks{\RRma\cup\la}$ is divisible by $\kks{\RRma}$,}
which actually holds whether or not $\la$ is the remainder.
Then we can subdivide our goal as follows:

\vspace{2mm}
\noindent{\bf Question $1'$.}
{\it
	Which $\kks{\mu}$, besides $\kl$, appear in the quotient $\kks{P\cup \la}/\kks{P}$
	where $P=\RRma$, and with what coefficients?
}

\vspace{2mm}
\noindent{\bf Question $2$.}
{\it
	How can $\kks{\RRma}$ be factorized?
}

\vspace{2mm}
In this (and author's following) paper, we give a reasonably complete answer to Question 2,
and partial answers to Question $1'$.
For Question 2, we first show in Theorem \ref{dist_krec_factor}
that multiple $k$-rectangles of different sizes entirely split,
namely that we have $\kks{\RRma}=\kks{R_{t_1}^{a_1}}\dots\kks{R_{t_m}^{a_m}}$.
Then in the following paper \cite{Takigiku},
we show that for each $1\le t \le k$ and $a>1$,
we have a nice factorization 
$\displaystyle\kks{R_t^a}=\krt \left(\sum_{\la\subset R_t} \kl\right)^{a-1}$,
generalizing the formula (\ref{intro:two}).
Thus, we have
\[
	\kks{\RRma} = \kks{R_{t_1}} \left(\sum_{\la^{(1)}\subset R_{t_1}} \kks{\la^{(1)}}\right)^{a_1-1}
	\dots
	\kks{R_{t_m}} \left(\sum_{\la^{(m)}\subset R_{t_m}} \kks{\la^{(m)}}\right)^{a_m-1}.
\]

For Question $1'$, unfortunately we cannot give a complete answer yet.
Still we obtain some nice explicit formulas, including the case (\ref{intro:one}).

We first show an auxiliary result that, being given
$P=\RRma$ and putting $Q=\RRm$ without multiplicities,
we have $\kks{P\cup\la}/\kks{P} = \kks{Q\cup\la}/\kks{Q}$ for any $\la$.
Thus we can reduce Question $1'$ to the case where the $k$-rectangles are of all different sizes.

We then derive explicit formulas in some limited cases where,
writing $\la=(\la_1,\dots,\la_l)$ and $\bar\la=(\la_1,\dots,\la_{l-1})$,
the parts of $\la$ except for $\la_l$ are all larger than the widths of the $k$-rectangles, 
and $\bar\la$ is contained in a $k$-rectangle.

An easiest case is where $\la=(r)$ consists of a single part, 
which generalizes the case (\ref{intro:one}).
Namely we show that
if $P=\RRma$ with $t_1<\dots<t_m$ and $a_1,\dots,a_m>0$ and $0\le r \le k$,
then $\kks{P\cup(r)}$ decomposes as $\kks{P}\cdot \sum_{s=0}^{r} \binom{\al{r}+s-1}{s} \kks{(r-s)}$,
where $\al{u} = \#\{i\mid 1\le i \le m,\ t_i\ge u\}$.
Considering the case $m=1$ and $a_1=1$, we obtain the formula (\ref{intro:one}).

Generalizing this case, 
we show in Theorem \ref{P_mu_r} that 
if $P$ and $\al{u}$ are the same as above and
$\la=(\la_1,\dots,\la_l)$ satisfies $\la_{l-1}>t_m$ and $\bar\la=(\la_1,\dots,\la_{l-1})$ is contained in a $k$-rectangle,
then $\kks{P\cup\la}$ decomposes as $\kks{P}\cdot \sum_{s=0}^{\la_l} \binom{\al{\la_l}+s-1}{s} \kks{\bar\la\cup(\la_l-s)}$.
In particular,
if $t_n < \la_l$, the summation on the right-hand side consists of a single term $\kl$.
\[
\begin{tikzpicture}[scale=0.12]
	\draw (0,0) -| (13,1) -| (11,3) -| (0,0);
\draw (10,1.5) to [out=45,in=180] (13,3) node [right] {$\bar\la$};

\draw (0,3) rectangle (9,6);
\draw (8,4.5) to [out=45,in=180] (11,6.5) node [right] {$R_{t_m}$};
\draw (0,8) rectangle (7,12);
\draw (6,10) to [out=45,in=180] (9,12) node [right] {$R_{t_{p+1}}$};
\draw (0,13) rectangle (5,17);
\draw (4,15) to [out=45,in=180] (7,17) node [right] {$R_{t_p}$};
\draw (0,20) rectangle (3,25);
\draw (2,22.5) to [out=45,in=180] (5,24.5) node [right] {$R_{t_1}$};
\draw [pattern=north east lines] (0,12) rectangle (6,13);
\draw (0,12) to [out=-20,in=-160] node[inner sep=0pt] (la) {} (6,12);
\draw [<-] (la) to [out=-135,in=0] (-2,10) node [left] {$\la_l$};

\draw (0,6) -- (0,8);
\draw [loosely dotted, thick] (4.5,6.2) -- (3.5,7.8);
\draw (0,17) -- (0,20);
\draw [loosely dotted, thick] (2.5,17.2) -- (1.5,19.8);

\node [right,text width=5cm] at (23,12.5) {
In this figure $p=m-\al{\la_l}$ and $a_i=1$ for all $i$.
};

\end{tikzpicture}
\]

It is worth noting that,
in all cases we have seen, 
$\kks{P\cup\la}\big/\kks{P}$ is a linear combination 
of $K$-$k$-Schur functions 
with {\it positive coefficients}.
Moreover, 
if $P=R_t$,
it seems that
each coefficient is $0$ or $1$ and
the set of $\mu$ such that the coefficient of $\kks{\mu}$ in $\kks{P\cup\la}\big/\kks{P}$ is $1$
is an \textit{interval} (with respect to the strong order. See Conjecture \ref{conj:interval}).
Anyway, it should be interesting to study the geometric meaning of these results and conjectures.

This paper is organized as follows.
In Section \ref{sec:prel},
we review some basic notations and facts about
combinatorial backgrounds of $K$-$k$-Schur functions.
In Section \ref{sec:general},
we show some auxiliary results
which provide a basis for our work.
In Section \ref{sec:one_row},
we give explicit factorization formulas in an easiest case where the remainder consists of a single part.
In Section \ref{sec:dist},
we generalize the result of the previous section and
give a ``straightforward factorization'' formula for 
a multiple $k$-rectangles of different sizes.
In Section \ref{sec:discuss},
we state some observations and conjectures.

\noindent{\bf Acknowledgement. }
The author would like to express his gratitude to
T.\ Ikeda
for suggesting the problem to the
author and helping him with many fruitful discussions.
He is grateful to
H.\ Hosaka and I.\ Terada
for many valuable comments and pointing out
mistakes and typos in the draft version of this paper.
He is also grateful to the committee of
the 29th international conference on
Formal Power Series and Algebraic Combinatorics (FPSAC)
for many valuable comments for the extended abstract version of this paper.
This work was supported by
the Program for Leading Graduate
Schools, MEXT, Japan.
The contents of this paper is the first half of the author's master-thesis \cite{MasterThesis}.

\section{Preliminaries}\label{sec:prel}

In this section we review some 
 requisite combinatorial backgrounds.
%
For detailed definitions,
see for instance \cite[Chapter 2]{MR3379711} or \cite[Chapter I]{MR1354144}.

\subsection{Partitions}\label{prel_partition}

Let $\mathcal{P}$ denote the set of partitions.
A partition $\la=(\la_1 \ge \la_2 \ge \dots)\in\mathcal{P}$ is identified with
its {\it Young diagram} (or {\it shape}), for which we use the French notation here.
quadrant\footnote{We use the French notation} of Cartesian plane so that there are $\la_i$ boxes arranged in left justified way in the $i$-th row from the bottom.
\vspace{-5mm}
\begin{center}
\[
\begin{tikzpicture}[scale=0.25]
	\draw [->] (-0.5,0) -- (6,0);
	\draw [->] (0,-0.5) -- (0,3);
	\draw (0,0) rectangle (1,1);
	\draw (0,1) rectangle (1,2);
	\draw (1,0) rectangle (2,1);
	\draw (1,1) rectangle (2,2);
	\draw (2,0) rectangle (3,1);
	\draw (3,0) rectangle (4,1);
	\node [below] at(3,-1) {the Young diagram of $(4,2)$};
\end{tikzpicture}
\]
\end{center}

\vspace{-1mm}
We denote the \textit{size} of a partition $\la$ by $|\la|$, the \textit{length} by $l(\la)$, and the \textit{conjugate} by $\la'$.
For partitions $\la$, $\mu$ we say $\la\subset\mu$ if $\la_i\le \mu_i$ for all $i$.
%
The {\it dominance order} $\unlhd$ on $\mathcal{P}$ is defined by saying that $\la \unlhd \mu$ if $|\la|=|\mu|$ and $\sum_{i=1}^{r}\la_i\le\sum_{i=1}^{r}\mu_i$ for all $r\ge 1$.
%
%
Sometimes we abbreviate \textit{horizontal strip} (resp.\ \textit{vertical strip}) (of size $r$) to ($r$-)h.s.\ (resp.\ ($r$-)v.s.).
%
For a partition $\la$ and a cell $c=(i,j)$ in $\la$, 
we denote the \textit{hook length} of $c$ in $\la$ by $\hook{c}{\la}=\la_i+\la'_j-i-j+1$.

For a partition $\la$, a {\it removable corner} of $\la$ (or {\it $\la$-removable corner}) is a cell $(i,j)\in\la$ with $(i,j+1),(i+1,j)\notin\la$.
$(i,j)\in(\Z_{>0})^2\sm\la$ is said to be
an {\it addable corner} of $\la$ (or {\it $\la$-addable corner})
if $(i,j-1),(i-1,j)\in\la$
with the understanding that $(0,j),(j,0)\in\la$.
In order to avoid making equations too wide,
we may denote removable corner (resp.\ addable corner) briefly by 
rem.\ cor.\ (resp.\ add.\ cor.).
A cell $(i,j)\in \la$ is called {\it extremal} if $(i+1,j+1)\notin\la$.

For partitions $\la=(\la_1,\ldots,\la_{l(\la)})$, $\mu=(\mu_1,\ldots,\mu_{l(\mu)})$, 
we write $\la \oplus \mu = (\la_1+\mu_1,\la_2+\mu_1,\ldots,\la_{l(\la)}+\mu_1,\mu_1,\ldots,\mu_{l(\mu)})$.
For partitions $\la^{(1)},\ldots,\la^{(n)}$, we define $\la^{(1)}\oplus\cdots\oplus \la^{(n)} = (\la^{(1)}\oplus\cdots\oplus \la^{(n-1)})\oplus \la^{(n)}$, recursively.

\vspace{-2mm}
\[
\begin{tikzpicture}[scale=0.16]
\draw (0,15) -- (1,15) -- (1,14) -- (3,14) -- (3,13) -- (4,13) -- (4,12) -- (0,12) -- cycle;
\draw (2,13) to [out=45,in=180] (6,14.5) node[right] {$\la^{(n)}$};
\draw (4,12) -- (6,12) -- (6,10) -- (8,10) -- (8,8) -- (4,8) -- cycle;
\draw (6,9) to [out=45,in=180] (9,10.5) node[right] {$\la^{(n-1)}$};
\draw (11,5) -- (13,5) -- (13,4) -- (16,4) -- (16,2) -- (17,2) -- (17,0) -- (11,0) -- cycle;
\draw (15,1.5) to [out=45,in=180] (19,3) node[right] {$\la^{(1)}$};
\draw[loosely dotted,very thick] (8,8) -- (11,5);
\draw (0,12) -- (0,0) -- (11,0);
\end{tikzpicture}
\]
\begin{center}
the shape of $\la^{(1)}\oplus\dots\oplus\la^{(n)}$
\end{center}


\subsection{Bounded partitions, cores, affine Grassmannian elements, and $k$-rectangles $R_t$}\label{prel_pca}

A partition $\lambda$ is called {\it $k$-bounded} if $\lambda_1 \le k$.
Let $\Pk$ be the set of all $k$-bounded partitions.
%
An {\it $r$-core} (or simply a \textit{core} if no confusion can arise) is a partition none of whose cells have a hook length equal to $r$.
We denote by $\mathcal{C}_r$ the set of all $r$-core partitions.
When we consider a partition as a core, the notion of size differs from the usual one:
the {\it length} (or {\it size}) of an $r$-core $\ka$ is the number of cells in $\ka$ whose hook length is smaller than $r$, and denoted by $|\ka|_{r}$.

The {\it affine symmetric group} $\ti S_{k+1}$ is given by 
generators $\{ s_0,s_1,\dots,s_{k}\}$
and relations
$s_i^2=1$,
$s_i s_{i+1} s_i = s_{i+1} s_i s_{i+1}$,
$s_i s_j = s_j s_i$ for $i-j \not\equiv 0,1,k \mod (k+1)$,
with all indices are considered mod $(k+1)$.
Note that the symmetric group $S_{k+1}$ generated by $\{s_1,\dots,s_{k}\}$ is a subgroup of $\ti S_{k+1}$.
We identify the left cosets of $\ti S_{k+1}/S_{k+1}$ with their minimal length representatives, which we call {\it affine Grassmannian elements}.
Namely, the set of affine Grassmannian elements is
$\{ w\in \ti S_{k+1} \mid l(w s_i)>l(w)\ (\forall i\neq 0)\}$.

\vspace{2mm}
{\it Hereafter we fix a positive integer $k$.}

For a cell $c=(i,j)$, the {\it content} of $c$ is $j-i$ and
the {\it residue} of $c$ is $\mathrm{res}(c)=j-i \mod (k+1) \in \mathbb{Z}/(k+1)$.
For a set $X$ of cells, we write
$\mathrm{Res}(X) = \{\,\res(c) \mid c\in X\,\}$.
%
We will write a $\la$-removable corner of residue $i$ simply a $\la$-removable $i$-corner.
For simplicity of notation, we may use an integer to represent a residue, omitting ``mod $(k+1)$''.

We denote by $R_t$ the partition 
$(t^{k+1-t})=(t,t,\dots,t) \in \Pk$ for $1\le t \le k$,
which is called a {\it $k$-rectangle}.
Naturally a $k$-rectangle is a $(k+1)$-core.

\vspace{2mm}
Now we recall the
bijection between
the $k$-bounded partitions in $\Pk$, the $(k+1)$-cores in $\Cn$, and the affine Grassmannian elements in $\tSn$:
\[\xymatrix{
		\Pk \ar@<0.5ex>[rr]^{\core} \ar[rd]_{\substack{\text{by taking}\\\text{``word''}}} & & \Cn \ar@<0.5ex>[ll]^{\bdd} \\
     & \tSn \ar[ru]_{\StoC} & 
}
\]

\noindent\underline{\it The maps $\bdd$ and $\core$:}

The map
	$\bdd \colon \Cn \lra \Pk ; \ka \mapsto \la$
is defined by
$\la_i=\#\{j\mid (i,j)\in\ka,\ \hook{(i,j)}{\ka}\le k\}$.

The map
	$\core \colon \Pk \lra \Cn ; \la \mapsto \ka $
is defined by the following procedure:
given a $k$-bounded partition $\la$ then work from the smallest part to the largest.
For each row, calculate the hook lengths of all its cells.
If there is a cell with hook length greater than $k$, slide this row to the right until all its cells have hook length not greater than $k$.
In the end this process produces a skew shape $\mu/\nu$, where in fact $\mu$ is a $(k+1)$-core.
Then let $\ka$ be this $\mu$.

Then in fact $\bdd$ and $\core$ are bijective and $\bdd=\core^{-1}$.
See \cite[Theorem 7]{MR2167475} for the proof.
The next lemma gives 
a more explicit description for $\core$,
which follows from the argument given just before \cite[Example 1.23]{MR3379711}:
\begin{lemm}\label{core_j}
For $\la \in \Pk$ and $j\geq 1$,
$ \core(\la)_j = \core(\la)_{j+k+1-\la_j}+\la_j$. 
\end{lemm}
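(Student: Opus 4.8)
The plan is to unwind the definition of $\core$ and track what happens in the row-by-row sliding procedure, focusing on a single row $j$ and the cell in that row immediately above it in the core. Recall that $\core(\la)$ is built from $\la$ by processing rows from smallest to largest and sliding each row rightward until every cell in it has hook length $\le k$; when the process terminates we obtain a skew shape whose outer shape is the $(k+1)$-core $\ka=\core(\la)$. The first observation I would record is that, once rows $j+1,j+2,\dots$ have been placed (equivalently, once $\core(\la)_{j+1},\core(\la)_{j+2},\dots$ are determined), the position of row $j$ is forced: row $j$ has exactly $\la_j$ cells, and sliding it right increases every hook length in that row by $1$, so we slide exactly until the leftmost cell of row $j$ has hook length $\le k$, and in fact we want the largest hook length in row $j$ to be as large as possible subject to being $\le k$. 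Since hook lengths in a row are strictly decreasing from left to right, the binding constraint is the leftmost cell.

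The key computation is then: with $\ka=\core(\la)$, the leftmost cell of row $j$ is $(j, \ka_j - \la_j + 1)$ (because row $j$ of $\ka$ ends at column $\ka_j$ and has $\la_j$ $k$-bounded cells sitting at its right end — here one uses that in the skew shape $\ka/\nu$ produced by the procedure, row $j$ consists of exactly the cells contributing to $\la_j$, i.e.\ those of hook length $\le k$). Its hook length in $\ka$ is $\ka_j + \ka'_{\ka_j - \la_j + 1} - j - (\ka_j - \la_j + 1) + 1 = \la_j + \ka'_{\ka_j-\la_j+1} - j$. I would argue that this leftmost cell has hook length exactly $k$: it is $\le k$ by construction, and if it were $\le k-1$ we could slide row $j$ one more step to the left — but that would mean that in an earlier stage the row was slid one step too far, contradicting the minimality built into the "slide until all hooks are $\le k$" rule (here I would use that sliding row $j$ left by one decreases its leftmost hook to $k-1<k$, so there was no reason to slide that far; a clean way to phrase this is that $\core$ produces the \emph{leftmost} valid placement). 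Hence $\la_j + \ka'_{\ka_j - \la_j + 1} - j = k$, i.e.\ the number of rows of $\ka$ of length $\ge \ka_j - \la_j + 1$ equals $k+1-\la_j+j$. Equivalently, row $j+k+1-\la_j$ is the last row whose length is $\ge \ka_j - \la_j + 1$, which (since $\ka$ is a core and these are the only relevant rows) forces $\ka_{j+k+1-\la_j} = \ka_j - \la_j$, i.e.\ $\ka_j = \ka_{j+k+1-\la_j} + \la_j$. Rewriting $\ka = \core(\la)$ gives the claim.

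The main obstacle I anticipate is making rigorous the step that the leftmost cell of row $j$ has hook length \emph{exactly} $k$ and that the cell directly above the end of row $j$ (i.e.\ the cell $(j,\ka_j-\la_j+1)$ viewed in row $j+1$ if present) behaves as claimed — in other words, correctly identifying which column of $\ka$ the left edge of the $k$-bounded part of row $j$ sits in, and showing there is no "gap" in the skew shape that would break the indexing. This requires a careful look at the inductive structure of the sliding procedure (smaller rows first) and the interaction between consecutive rows; the excerpt points to the argument just before \cite[Example 1.23]{MR3379711}, so I would adapt that reasoning, which presumably establishes precisely that each $k$-bounded row of $\core(\la)$ is slid so that its left cell attains hook length $k$. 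Once that normalization is in hand, the identity $\core(\la)_j = \core(\la)_{j+k+1-\la_j} + \la_j$ is a short bookkeeping step as above, and it also makes sense for all $j\ge 1$ under the usual convention $\core(\la)_i = 0$ for $i > l(\core(\la))$ — one should just double-check the boundary case where $j + k + 1 - \la_j$ exceeds the length of $\core(\la)$, which corresponds to $\la_j$ being a part that still has full "room" below it.
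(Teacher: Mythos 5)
Your reduction of the problem to a statement about the leftmost cell of the $k$-bounded segment of row $j$ is reasonable, but the key claim you rest everything on --- that this leftmost cell $(j,\core(\la)_j-\la_j+1)$ has hook length \emph{exactly} $k$, equivalently that $\la_j+\core(\la)'_{\core(\la)_j-\la_j+1}-j=k$ --- is false, and not only in the boundary case you flag. Take $k=3$ and $\la=(2,2,2)$: the sliding procedure gives $\core(\la)=(4,2,2)$, whose first row has hook lengths $6,5,2,1$, so the leftmost bounded cell of row $1$ is $(1,3)$ with hook length $2\neq 3$, even though row $1$ was genuinely slid and $j+k+1-\la_j=3\le l(\core(\la))$. (The trivial case $\la=(1)$, $k=2$ already breaks the claim for unslid rows.) Your justification --- ``if the hook were $\le k-1$ we could slide one step back left'' --- fails because sliding a row right by one column changes the leftmost hook length by $\#\{i>j:\core(\la)_i=c\}$, which can exceed $1$ when several higher rows end at the same column; so the stopping value can jump from $\ge k+1$ straight down past $k$. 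Consequently your intermediate identity $\core(\la)'_{\core(\la)_j-\la_j+1}=k+1-\la_j+j$ is also false in the example above ($1\neq 3$), and the derivation collapses even though the final statement happens to hold.

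The correct mechanism is not about the column where the bounded segment starts but about the \emph{row index} at which the hook length $k+1$ is skipped. The standard description of the hook lengths of row $j$ of a partition $\ka$ is
\[
\{\hook{(j,c)}{\ka}\mid 1\le c\le \ka_j\}=\{1,\dots,\ka_j+N-j\}\setminus\{\ka_j-\ka_m+m-j\mid j<m\le N\}
\]
for $N\ge l(\ka)$, the excluded values being strictly increasing in $m$. Since $\ka=\core(\la)$ is a $(k+1)$-core, $k+1$ must be excluded, so $k+1=\ka_j-\ka_{m_0}+m_0-j$ for a unique $m_0>j$; and since $\la_j$ counts the hook lengths $\le k$ in row $j$, one gets $\la_j=k-(m_0-j-1)$, i.e.\ $m_0=j+k+1-\la_j$, whence $\ka_j-\ka_{j+k+1-\la_j}=\la_j$. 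This is essentially the argument from \cite{MR3379711} that the paper cites (the paper itself gives no proof beyond that citation); if you want to keep your sliding-procedure viewpoint you would have to replace the ``hook exactly $k$'' normalization by this core/skipped-hook argument, since no normalization of the leftmost column alone can be correct.
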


Note that if $\la$ is contained in a $k$-rectangle then $\la\in\Pk$ and $\la\in\Cn$, and besides $\bdd(\la)=\la=\core(\la)$.

\vspace{2mm}
\noindent\underline{\it The map $\StoC$ and the inverse:}
For $\ka\in\Cn$ and $i=0,1,\dots,k$,
we define $s_i\cdot \ka$ as follows:
\begin{itemize}
	\item if there is a $\ka$-addable $i$-corner, 
		then let $s_i\cdot \ka$ be 
		$\ka$ with all $\ka$-addable $i$-corners added,
	\item if there is a $\ka$-removable $i$-corner, 
		then let $s_i\cdot \ka$ be 
		$\ka$ with all $\ka$-removable $i$-corners removed,
	\item otherwise, let $s_i\cdot \ka$ be $\ka$.
\end{itemize}

In fact first and second case never occur simultaneously and
$s_i\cdot \ka\in\Cn$ and
then we have a well-defined $\ti S_{k+1}$-action on $\Cn$ and
it induces a bijection
$$\StoC : \tSn \longrightarrow \Cn ; w \mapsto w\cdot \emptyset.$$

The inverse map is given by
\[
\Cn\lra\Pk\lra \tSn ; \ka \mapsto \bdd(\ka) \mapsto w_{\bdd(\ka)},
\]
where $w_{\la}$ is the affine permutation
$s_{i_1} s_{i_2}\dots s_{i_l}$, where $(i_1,i_2,\dots,i_l)$ is the sequence
obtained by reading the residues of the cells in $\la$, from the shortest row to the largest, and within each row from right to left.
See \cite[Corollary 48]{MR2167475} for the proof.

\subsection{Weak order and weak strips}\label{prel_wo}

In this subsection we review the weak order on $\Pk\simeq\Cn\simeq\tSn$.

For a $k$-bounded partition $\la$, its {\it $k$-conjugate} $\kconj{\la}$ is also a $k$-bounded partition given by $\kconj{\la}=\bdd(\core(\la)')$.

\begin{defi-prop}\label{weakorder}
The {\it weak order} $\prec$ on $\tSn$ is defined by the following covering relation:
\begin{equation}
	w \wcover v :\iff \text{$\exists i$ such that $s_i w=v$, $l(w)+1=l(v)$.} \label{wo_Sn}
\end{equation}

It is transferred to 
$\Pk$ and $\Cn$
by the bijection described above
 as follows:
\begin{align}
	&\text{on $\Pk$:}&	\la \wcover \mu &\iff \la\subset\mu,\ \kconj{\la}\subset\kconj{\mu},\ |\la|+1=|\mu|. \label{wo_Pk} \\
 &\text{on $\Cn$:}& \tau \wcover \ka &\iff \text{$\exists i$ such that $s_i \tau=\ka$, $|\tau|_{k+1}+1=|\ka|_{k+1}$.} \label{wo_Cn}
\end{align}
\end{defi-prop}
\begin{proof}
	$(\ref{wo_Sn}) \iff (\ref{wo_Cn})$: see \cite{MR1851953}.
	$(\ref{wo_Pk}) \iff (\ref{wo_Cn})$: see \cite[Corollary 25]{MR2167475}.
\end{proof}

\begin{defi-prop}\label{weakstrip}
	For $(k+1)$-cores $\tau\subset\ka\in\Cn$,
	$\ka/\tau$ is called a {\it weak strip} of size $r$
	(or a {\it weak $r$-strip})
	if the following equivalent conditions hold:
\begin{enumerate}
\renewcommand{\labelenumi}{(\arabic{enumi})}
\item $\ka/\tau$ is horizontal strip and 
	$\tau \wcover \exists \tau^{(1)} \wcover \dots \wcover \exists \tau^{(r)} = \ka$.
\item $\ka/\tau$ is horizontal strip and 
	$\ksize{\ka}=\ksize{\tau}+r$ and
	$\#\mathrm{Res}(\ka/\tau)=r$.
\item $\bdd(\ka)/\bdd(\tau)$ is a horizontal strip and 
	$\bdd(\ka')/\bdd(\tau')$ is a vertical strip and
	$\ksize{\ka} = \ksize{\tau}+r$.
\item $\ka = s_{i_1}\dots s_{i_r} \tau$ 
	for some cyclically decreasing element $s_{i_1}\dots s_{i_r}$.\\
	(Here, an affine permutation $w=s_{i_1}\dots s_{i_r}$ 
	(a reduced expression) is called
	{\it cyclically decreasing} if 
	$i_1,\dots,i_r$ are distinct and
	$j$ never precedes $j+1$ (taken modulo $k+1$)
	in the sequence $i_1 i_2 \dots i_r$.
	This definition is in fact independent of which reduced expression we choose.
	)
\end{enumerate}
\end{defi-prop}
\begin{proof}
$(1)\Lra(3)$: see \cite[Theorem 58]{MR2167475}.
$(3)\Lra(1)$: see \cite[Proposition 54, Theorem 56]{MR2167475}.
$(3)\Lra(2)$: see \cite[Theorem 56]{MR2167475}.

$(4)\Lra(1)$, $(1)\Lra(4)$: 
see Appendix \ref{sec:apdx_ws}.

$(2)\Lra(1)$: omitted since (2) is not used in this paper.
\end{proof}

\subsection{Symmetric functions}\label{prel_sym}

Let $\La=\Z[h_1,h_2,\dots]$ be the ring of symmetric functions,
generated by the 
\textit{complete symmetric functions}
$ h_r = \sum_{i_1\le i_2\le\dots\le i_r} x_{i_1}\dots x_{i_r}$.
%
For a partition $\la$
we set 
$h_\la = h_{\la_1}h_{\la_2}\dots h_{\la_{l(\la)}}$.
Then
$\{h_\la\}_{\la\in \mathcal{P}}$
forms a $\Z$-basis of $\La$.
%
The \textit{Schur functions} $\{s_\la\}_{\la\in\mathcal{P}}$ are the family of symmetric functions satisfying the \textit{Pieri rule}:
\[ h_r s_\la = \sum_{\text{$\mu/\la$:horizontal $r$-strip}} s_\mu.\]

Note that
$h_r s_{\la} = s_{\la\cup(r)} + \sum_{\mu\rhd\la\cup(r)} a_{\mu}s_{\mu}$
for some $a_\mu$.
Using this repeatedly, we can write
$h_\la = s_\la + \sum_{\la\lhd\mu} K_{\mu\la} s_{\mu}$
for some coefficients $K_{\mu\la}$.
%
Thus Schur functions $\{s_\la\}_{\la\in\mathcal{P}}$ form a basis of $\La$
since the transformation matrix between $\{s_{\la}\}_{\la}$ and $\{h_{\mu}\}_{\mu}$ is unitriangular.

\subsection{$k$-Schur functions}\label{prel_kS}

We recall a characterization of $k$-Schur functions given in \cite{MR2331242},
since it is a model for and has a relationship with $K$-$k$-Schur functions.

\begin{defi}[$k$-Schur function via ``weak Pieri rule'']
$k$-Schur functions
$\{\ks{\la}\}_{\la\in\Pk}$ are the family of symmetric functions
such that 
\begin{align*}
	\ks{\emptyset}&=1, \\
	h_r \ks{\la} &= \sum_{\mu} \ks{\mu} \quad \text{for $r\le k$ and $\mu\in\Pk$,}
\end{align*}
summed over $\mu\in\Pk$ such that $\core(\mu)/\core(\la)$ is a weak strip of size $r$.
\end{defi}

According to the fact that
if $\core(\nu)/\core(\eta)$ is a weak strip then $\nu/\eta$ is a horizontal strip,
we can write 
$h_\la = \ks{\la} + \sum_{\la\lhd\mu\in\Pk} K^{(k)}_{\mu\la} \ks{\mu}$
for $\la\in\Pk$
by the same argument as the case of Schur functions,
which ensures the well-definedness of $\ks{\la}$
and shows that $\{\ks{\la}\}_{\la\in\Pk}$ forms a basis of $\Lk=\Z[h_1,\dots,h_k]\subset\La$.
In addition $\ks{\la}$ is homogeneous of degree $|\la|$.

Note that $\ks{(r)}=h_r$ for $1\le r \le k$ since $\core(\la)/\emptyset$ is a weak $r$-strip if and only if $\la=(r)$.
In \cite[Property 39]{MR2331242} it is proved that if $\la_1+l(\la)\le k+1$ (in other words $\la\subset R_t$ for some $t$) then $\ks{\la}=s_{\la}$.

It is proved in \cite[Theorem 40]{MR2331242} that
\begin{prop}[$k$-rectangle property]\label{prop:kS_fac}
For $1\le t \le k$ and $\la\in\Pk$, we have
$
	\ks{R_t\cup\la} = \ks{R_t} \ks{\la} (=s_{R_t} \ks{\la}).
$
\end{prop}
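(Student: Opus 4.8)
The plan is to prove the equivalent identity $\ks{R_t\cup\la}=\ks{R_t}\,\ks{\la}$ by induction on $|\la|$; the parenthetical then follows from $\ks{R_t}=s_{R_t}$, which holds by \cite[Property 39]{MR2331242} since $R_t\subset R_t$. Write $\bla=(\la_2,\dots,\la_{l(\la)})$, so that $\la=(\la_1)\cup\bla$. By the weak Pieri rule, together with the reasoning recalled in Subsection~\ref{prel_kS}, one has $h_{\la_1}\ks{\bla}=\ks{\la}+\sum_{\mu}\ks{\mu}$, the sum over those $\mu\in\Pk$, $\mu\neq\la$, with $\core(\mu)/\core(\bla)$ a weak $\la_1$-strip; each such $\mu$ has $|\mu|=|\la|$ and $\mu\rhd\la$, because a weak strip is a horizontal strip and $\la$ is the $\unlhd$-minimum among the partitions obtained from $\bla$ by adjoining a horizontal $\la_1$-strip. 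Solving for $\ks{\la}$, multiplying by $\ks{R_t}$, and applying the inductive hypothesis to $\bla$ (of smaller size) and to each such $\mu$ (of the same size but strictly $\unlhd$-larger --- an inner Noetherian induction on dominance), we obtain
\[
	\ks{R_t}\,\ks{\la}=h_{\la_1}\,\ks{R_t\cup\bla}-\sum_{\mu}\ks{R_t\cup\mu},
\]
with the sum over the same $\mu$ as above; the base case $\la=\emptyset$ is trivial.

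Everything therefore reduces to the statement I will call \emph{rectangle-stability of weak strips}: for every $\sigma\in\Pk$ and every $r\leq k$, the assignment $\nu\mapsto R_t\cup\nu$ is a bijection from the set of $\nu\in\Pk$ with $\core(\nu)/\core(\sigma)$ a weak $r$-strip onto the set of $\rho\in\Pk$ with $\core(\rho)/\core(R_t\cup\sigma)$ a weak $r$-strip. Granting this with $\sigma=\bla$ and $r=\la_1$, the weak Pieri rule gives $h_{\la_1}\ks{R_t\cup\bla}=\sum_{\mu}\ks{R_t\cup\mu}$, the sum now over exactly the index set of the expansion of $h_{\la_1}\ks{\bla}$ (with $\la$ included); substituting this into the display above makes the two sums cancel, leaving $\ks{R_t}\ks{\la}=\ks{R_t\cup\la}$ and closing the induction.

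To prove rectangle-stability I would use Definition-Proposition~\ref{weakstrip}(3). Since $\bdd(\core(\mu))=\mu$ yields $\ksize{\core(\mu)}=|\mu|$ for every $\mu\in\Pk$, a weak $r$-strip $\core(\mu)/\core(\sigma)$ is precisely the data that $\mu/\sigma$ is a horizontal strip, $\kconj{\mu}/\kconj{\sigma}$ is a vertical strip, and $|\mu|=|\sigma|+r$. The size condition transfers at once, as $|R_t\cup\mu|=|R_t|+|\mu|$. For the other two the key input is the identity $\kconj{(R_t\cup\mu)}=R_{k+1-t}\cup\kconj{\mu}$ for all $\mu\in\Pk$ (note $R_t'=R_{k+1-t}$), which one verifies from $\kconj{\mu}=\bdd(\core(\mu)')$ and the row recursion of Lemma~\ref{core_j}; granted it, rectangle-stability reduces to the elementary facts that $\mu/\sigma$ is a horizontal strip iff $(R_t\cup\mu)/(R_t\cup\sigma)$ is one and that $\kconj{\mu}/\kconj{\sigma}$ is a vertical strip iff $(R_{k+1-t}\cup\kconj{\mu})/(R_{k+1-t}\cup\kconj{\sigma})$ is one, both of which can be read off the merge operation $\cup$ column by column. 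Injectivity of $\nu\mapsto R_t\cup\nu$ is immediate, since $\nu$ is recovered by deleting $k+1-t$ parts equal to $t$.

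The real obstacle is the \emph{surjectivity} half of rectangle-stability, i.e.\ that every weak $r$-strip over $\core(R_t\cup\sigma)$ is already of the form $\core(R_t\cup\nu)$ --- equivalently, that any $\rho\in\Pk$ carrying such a strip has at least $k+1-t$ parts equal to $t$. The horizontal-strip condition alone is too weak: it only forbids lengthening more than one row of size $t$ past $t$, so when $\sigma$ has no part equal to $t$ it leaves open a spurious $\rho$ with merely $k-t$ parts equal to $t$, and one must invoke the residue (equivalently, vertical-strip-of-$k$-conjugates) constraints to rule it out. One route is to verify, via Lemma~\ref{core_j}, that for such a spurious $\rho$ the residue set of $\core(\rho)/\core(R_t\cup\sigma)$ has size $>r$, contradicting Definition-Proposition~\ref{weakstrip}(2). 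A more conceptual route passes to $\ti S_{k+1}$ and uses the structure behind the Introduction's remark --- that $\core(\sigma)$ and $\core(R_t\cup\sigma)$ differ, up to length-zero elements of the extended affine Weyl group, by the translation $t_{-\varpi_t^\vee}$, with lengths adding because $\ksize{R_t}=|R_t|$: a weak $r$-strip corresponds by Definition-Proposition~\ref{weakstrip}(4) to left multiplication by a cyclically decreasing element of length $r\leq k$, and the point is that such a short cyclically decreasing element commutes past $t_{-\varpi_t^\vee}$ (up to the residue relabelling from the length-zero factor) into another cyclically decreasing element, forcing the strip to respect the embedded rectangle. Making this commutation precise, and checking that it genuinely uses $r\leq k$, is the technical crux.
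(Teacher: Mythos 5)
Your overall reduction is sound, and it is essentially the route the paper takes: the paper does not reprove this proposition but cites \cite[Theorem 40]{MR2331242}, and the strategy it recalls at the opening of Section \ref{sec:general} (a linear map $\Theta\colon\ks{\mu}\mapsto\ks{R_t\cup\mu}$ shown to commute with multiplication by $h_r$, hence equal to multiplication by $\Theta(1)=\ks{R_t}$) is just a repackaging of your induction on $(|\la|,\unlhd)$. Both arguments stand or fall on the same single input, your ``rectangle-stability of weak strips'', which in the paper is Proposition \ref{ws_Rt} together with Corollary \ref{weaks_rect_P} (themselves cited to \cite[Theorem 20]{MR2079931} and to the proof of \cite[Theorem 40]{MR2331242}). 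Your triangularity bookkeeping, the transfer of the horizontal-strip condition column by column, and the use of Definition-Proposition \ref{weakstrip}(3) are all correct.

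The genuine gap is exactly where you flag it: the surjectivity half of rectangle-stability, i.e.\ that any $\rho\in\Pk$ with $\core(\rho)/\core(R_t\cup\sigma)$ a weak $r$-strip must have the form $R_t\cup\nu$. This is not a technicality to be deferred --- it is the entire nontrivial content of the proposition (in the paper's language it is the ``$\Longrightarrow$'' direction of Corollary \ref{weaks_rect_P}, which rests on Corollary \ref{ws_rect_P}, i.e.\ \cite[Theorem 20]{MR2079931}: $\la\cup P\preceq\mu$ forces $\mu=\nu\cup P$). You correctly observe that the horizontal-strip condition alone cannot rule out a spurious $\rho$, and you propose two strategies (a residue count via Definition-Proposition \ref{weakstrip}(2), or a commutation argument with $t_{-\varpi_t^\vee}$ in the extended affine Weyl group), but you carry out neither and explicitly label the commutation ``the technical crux''. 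A secondary, more routine gap is the identity $\kconj{(R_t\cup\mu)}=R_{k+1-t}\cup\kconj{\mu}$, which you assert ``one verifies from Lemma \ref{core_j}'' without doing so; this is comparable in substance to Lemma \ref{core_tila_i} and needs an actual argument. As a proof modulo the paper's Proposition \ref{ws_Rt} and Corollary \ref{weaks_rect_P} your write-up closes; as a self-contained proof it does not.
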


\subsection{$K$-$k$-Schur functions $\kl$}\label{prel_KkS}
In \cite{Morse12} a combinatorial characterization of $K$-$k$-Schur functions is given
via an analogue of the Pieri rule, using some kind of strips called \textit{affine set-valued strips}.

\vspace{2mm}
For a partition $\la$, $(i,j)\in(\Z_{>0})^2$ is called {\it $\la$-blocked} if $(i+1,j)\in \la$.

\begin{defi}[affine set-valued strip]\label{asvs}
	For $r\le k$,
	$(\ga/\beta,\rho)$ is called an {\it affine set-valued strip} of size $r$
	(or an {\it affine set-valued $r$-strip})
	if
	$\rho$ is a partition and
	$\beta\subset\ga$ are cores both containing $\rho$ such that
	\begin{enumerate}
			\renewcommand{\labelenumi}{(\arabic{enumi})}
	\item $\ga/\beta$ is a weak $(r-m)$-strip where we put $m=\#\mathrm{Res}(\beta/\rho)$,
	\item $\beta/\rho$ is a subset of $\beta$-removable corners,
	\item $\ga/\rho$ is a horizontal strip,
	\item For all $i\in\Res(\beta/\rho)$, 
		all $\beta$-removable $i$-corners which are not $\ga$-blocked are in $\beta/\rho$.
	\end{enumerate}
\end{defi}

In this paper we employ the following characterization \cite[Theorem 48]{Morse12} of the $K$-$k$-Schur function as its definition.
\begin{defi}[$K$-$k$-Schur function via an ``affine set-valued'' Pieri rule]\label{KkSchur_def}
$K$-$k$-Schur functions $\{\kks{\la}\}_{\la\in\Pk}$ are the family of symmetric functions such that 
$\kks{\emptyset}=1$ and 
for $\la \in \Pk$ and $0 \le r \le k$,
\[
h_r \cdot \kks{\la} =
\sum_{(\mu,\rho)} (-1)^{|\la|+r-|\mu|} \kks{\mu},
	\]
summed over $(\mu,\rho)$ such that $(\core(\mu)/\core(\la),\rho)$ is an affine set-valued strip of size $r$.
\end{defi}

Notice that, 
given a weak strip $\ga/\beta$, taking a $\rho$ such that $(\ga/\beta,\rho)$ becomes an affine set-valued strip is equivalent to choosing a subset of the set of residues $i\in \mathbb{Z}/(k+1)$ where
there is at least one $\ga$-nonblocked $\beta$-removable $i$-corner.

Now we introduce a notation for the convinience:
\begin{defi}
	For partitions $\la,\mu$, 
	we denote by $r_{\la\mu}$ the number of distinct residues of $\la$-nonblocked $\mu$-removable corners.
\end{defi}

Then for a fixed weak $(r-m)$-strip $\ga/\beta$, the number of $\rho$ such that $(\ga/\beta,\rho)$ is an affine set-valued $r$-strip is equal to
$\displaystyle\binom{r_{\ga\beta}}{m}$.
Notice that $\ga/\beta$ with all $\ga$-nonblocked $\beta$-removable corners added is a horizontal strip.
Therefore we can rewrite Definition \ref{KkSchur_def}:

\begin{prop}\label{Pieri}
For $\la \in \Pk$ and $0 \le r \le k$,
\begin{equation}\label{eq:Pieri}
h_r \cdot \kks{\la} = \sum_{s=0}^{r} (-1)^{r-s} 
\sum_{\substack{\mu \\ \core(\mu)/\core(\la):\text{weak $s$-strip}}} \binom{r_{\core(\mu)\core(\la)}}{r-s} \kks{\mu}. 
\end{equation}
\end{prop}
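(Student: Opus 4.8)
The plan is to obtain this purely by re-bracketing the sum in Definition~\ref{KkSchur_def}: I would group the pairs $(\mu,\rho)$ occurring there first by $\mu$ (equivalently, by the core $\ga=\core(\mu)$), and then, for each $\mu$, sign and count the admissible choices of $\rho$. Throughout fix $\la\in\Pk$, $0\le r\le k$, and write $\beta=\core(\la)$.

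First I would reconcile the signs. Suppose $(\ga/\beta,\rho)$ is an affine set-valued $r$-strip and put $m=\#\Res(\beta/\rho)$; by condition~(1) of Definition~\ref{asvs} the shape $\ga/\beta$ is a weak $(r-m)$-strip, so $\ksize{\ga}=\ksize{\beta}+(r-m)$ by Definition-Proposition~\ref{weakstrip}. Since the definition of $\bdd$ shows $|\nu|=\ksize{\core(\nu)}$ for every $\nu\in\Pk$ (both numbers count the cells of $\core(\nu)$ of hook length $\le k$), this reads $|\mu|=|\la|+r-m$, hence $|\la|+r-|\mu|=m$. Writing $s=r-m$ for the size of the weak strip $\core(\mu)/\core(\la)$, the sign $(-1)^{|\la|+r-|\mu|}$ in Definition~\ref{KkSchur_def} is therefore $(-1)^{m}=(-1)^{r-s}$, and only $\mu$ with $0\le s\le r$ can contribute.

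Next I would count, for a fixed $\mu$ such that $\ga/\beta=\core(\mu)/\core(\la)$ is a weak $s$-strip, the $\rho$ making $(\ga/\beta,\rho)$ an affine set-valued $r$-strip. Let $S$ be the set of residues $i$ admitting at least one $\ga$-nonblocked $\beta$-removable $i$-corner, so that $|S|=r_{\core(\mu)\core(\la)}$ by definition. The claim is that the admissible $\rho$ are precisely the partitions obtained from $\beta$ by deleting, for some subset $T\subseteq S$, \emph{all} $\ga$-nonblocked $\beta$-removable $i$-corners for every $i\in T$, and that such a $\rho$ satisfies $\#\Res(\beta/\rho)=|T|$. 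For a $\rho$ of this form, conditions~(2) and~(4) of Definition~\ref{asvs} hold by construction, condition~(3) holds because $\ga/\rho$ lies inside the horizontal strip gotten by adjoining to $\ga/\beta$ all $\ga$-nonblocked $\beta$-removable corners, and condition~(1) forces $|T|=m=r-s$. Conversely, for an arbitrary admissible $\rho$ one checks that $\beta/\rho$ contains no $\ga$-blocked cell---otherwise some column of the horizontal strip $\ga/\rho$ would contain two cells---so by condition~(4) the set $\beta/\rho$ is exactly the union over $i\in T:=\Res(\beta/\rho)$ of all $\ga$-nonblocked $\beta$-removable $i$-corners; thus $\rho$ arises from $T$ as above. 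Since $T$ is recovered from $\rho$ as $\Res(\beta/\rho)$, the correspondence $T\leftrightarrow\rho$ is a bijection, so the number of admissible $\rho$ equals $\binom{|S|}{r-s}=\binom{r_{\core(\mu)\core(\la)}}{r-s}$. Summing over $s$ and over the relevant $\mu$ then rewrites Definition~\ref{KkSchur_def} as \eqref{eq:Pieri}.

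The step I expect to be the main obstacle is the bijection above, specifically the point that an admissible $\rho$ is always \emph{saturated}, i.e.\ obtained by deleting \emph{all} nonblocked removable corners of the chosen residues. This is exactly where conditions~(3) and~(4) of Definition~\ref{asvs} are genuinely used, together with the elementary observation that no two removable corners of $\beta$ share a row or a column. The sign computation and the final counting are routine once this is established.
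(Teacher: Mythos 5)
Your proposal is correct and follows essentially the same route as the paper: the paper derives Proposition \ref{Pieri} directly from Definition \ref{KkSchur_def} by observing that the admissible $\rho$ for a fixed weak strip $\ga/\beta$ correspond to subsets of the residues carrying a $\ga$-nonblocked $\beta$-removable corner, giving the count $\binom{r_{\ga\beta}}{r-s}$ and the sign $(-1)^{r-s}$. Your write-up merely supplies the details (the saturation of $\rho$ via conditions (3) and (4), and the sign bookkeeping) that the paper leaves as a ``Notice that.''
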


		We can prove similarly that
		$\kks{\la}$ is uniquely determined by (\ref{eq:Pieri}):
		for $\la\in\Pk$ and $1\le r\le k$, we have
		$h_r \kks{\la} = \kks{\la\cup(r)} + \sum_{\mu} a_\mu \kks{\mu}$
		with $\mu\in\Pk$ satisfying $|\mu|<|\la\cup(r)|$ or $\mu \rhd \la\cup(r)$.
		Thus, for $\la\in\Pk$ we can write 
		$h_\la = \kks{\la} + \sum_{\mu} \mathcal{K}^{(k)}_{\mu\la} \kks{\mu}$,
		summed over $\mu\in\Pk$ satifying $|\mu|<|\la|$ or $\mu \rhd \la$.
		Hence $\kks{\la}$ is well-defined and $\{\kks{\la}\}_{\la\in\Pk}$ forms a basis of $\Lk$.

Note that
		$\kks{(r)}=h_r$ for $1\le r \le k$ since if $(\core(\mu)/\emptyset,\rho)$ is an affine set-valued $r$-strip then $(\mu,\rho)=((r),\emptyset)$.
		Moreover,
		though $\kks{\la}$ is an inhomogeneous symmetric function in general,
		from the form of (\ref{eq:Pieri}) we can deduce that the degree of $\kks{\la}$ is $|\la|$
		and its homogeneous part of highest degree is equal to $\ks{\la}$ by using induction.

\subsection{Some properties of bounded partitions and cores}\label{prel_prop}

In this section we review some properties which show that
the $k$-rectangles $R_t=(t^{k+1-t})$ are important
and thus it can be expected that there are some good properties of $\kks{\la}$'s where $\la$ can be written in the form $\la=R_t\cup\mu$.

Recall the weak order $\prec$ of Definition \ref{weakorder}.

\begin{coro}\label{ws_rect_P}
	For $\mu,\la \in \Pk$ and 
	$P=R_{t_1}^{a_1}\cup\cdots\cup R_{t_m}^{a_m}$ 
	$($$1\le t_1< \cdots < t_m \le k$ and $a_1,\dots,a_m\in\Z_{>0}$$)$, 
\[
	\la\cup P \preceq \mu \iff 
	\exists \nu \in \Pk,
	\begin{cases}
		\mu = \nu \cup P, \\
		\la \preceq \nu.
	\end{cases}
\]
\end{coro}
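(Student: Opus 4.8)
The plan is to derive the statement from the covering relation~(\ref{wo_Pk}) on $\Pk$, reducing first to the case where $P$ is a single $k$-rectangle. Two auxiliary facts will be used. The first is the elementary \emph{cancellation rule}: for any partitions $\alpha,\beta$ and any partition $R$, one has $\alpha\cup R\subseteq\beta\cup R\iff\alpha\subseteq\beta$; indeed, conjugating turns $\cup$ into componentwise addition, $(\gamma\cup R)'_j=\gamma'_j+R'_j$ for all $j$, so $\alpha\cup R\subseteq\beta\cup R\iff(\alpha\cup R)'\subseteq(\beta\cup R)'\iff\alpha'\subseteq\beta'\iff\alpha\subseteq\beta$. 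The second is that $k$-conjugation is compatible with adjoining a $k$-rectangle: $\kconj{(\sigma\cup R_t)}=\kconj{\sigma}\cup\kconj{R_t}=\kconj{\sigma}\cup R_{k+1-t}$ for all $\sigma\in\Pk$, where $\kconj{R_t}=\bdd(\core(R_t)')=\bdd(R_t')=\bdd(R_{k+1-t})=R_{k+1-t}$ since $R_t,R_{k+1-t}$ are contained in $k$-rectangles and $R_t'=R_{k+1-t}$; this is standard and can be cited from \cite{MR2167475}.

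\emph{Reduction to a single rectangle.} Writing $P=R_{s_1}\cup\dots\cup R_{s_N}$ as a genuine union of $N=a_1+\dots+a_m$ copies of $k$-rectangles, I would induct on $N$, the case $N=0$ being trivial. For $N\ge 1$, split off one factor as $P=P'\cup R_s$ and chain the equivalences: $\la\cup P\preceq\mu$ iff (single-rectangle case applied to $\la\cup P'$ and the rectangle $R_s$) $\mu=\nu'\cup R_s$ with $\la\cup P'\preceq\nu'$, iff (induction hypothesis) $\nu'=\nu\cup P'$ with $\la\preceq\nu$, iff $\mu=\nu\cup P'\cup R_s=\nu\cup P$ with $\la\preceq\nu$.

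\emph{The single-rectangle case.} It remains to show, for $\la,\mu\in\Pk$ and $1\le t\le k$, that $\la\cup R_t\preceq\mu$ iff $\mu=\nu\cup R_t$ and $\la\preceq\nu$ for some $\nu\in\Pk$. Besides the two facts above I would invoke the characterization of the $k$-bounded partitions lying weakly above $R_t$ --- namely, if $R_t\subseteq\mu$ and $R_{k+1-t}\subseteq\kconj{\mu}$, then $\mu=R_t\cup\nu$ for some $\nu\in\Pk$ --- which I again take from \cite{MR2167475}. For the ``if'' direction, pick a saturated chain $\la=\eta^{(0)}\wcover\dots\wcover\eta^{(l)}=\nu$; adjoining $R_t$ throughout yields the chain $\la\cup R_t=\eta^{(0)}\cup R_t\wcover\dots\wcover\eta^{(l)}\cup R_t=\mu$, since each step satisfies~(\ref{wo_Pk}): the size grows by one, $\eta^{(i)}\cup R_t\subseteq\eta^{(i+1)}\cup R_t$ by cancellation, and $\kconj{(\eta^{(i)}\cup R_t)}=\kconj{\eta^{(i)}}\cup R_{k+1-t}\subseteq\kconj{\eta^{(i+1)}}\cup R_{k+1-t}=\kconj{(\eta^{(i+1)}\cup R_t)}$ again by cancellation. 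For the ``only if'' direction it suffices to prove the single-step assertion that $\xi\cup R_t\wcover\zeta$ forces $\zeta=\zeta_0\cup R_t$ with $\xi\wcover\zeta_0$, and then to iterate it along a saturated chain from $\la\cup R_t$ to $\mu$ (the sizes strictly increase, so one obtains a chain $\la\wcover\dots$ whose top is the required $\nu$, with the degenerate case giving $\nu=\la$). For the single step: $\xi\cup R_t\wcover\zeta$ gives $R_t\subseteq\xi\cup R_t\subseteq\zeta$ and $R_{k+1-t}\subseteq\kconj{\xi}\cup R_{k+1-t}=\kconj{(\xi\cup R_t)}\subseteq\kconj{\zeta}$, so by the cited characterization $\zeta=\zeta_0\cup R_t$ for some $\zeta_0\in\Pk$; then $|\zeta_0|=|\xi|+1$, while $\xi\subseteq\zeta_0$ and $\kconj{\xi}\subseteq\kconj{\zeta_0}$ follow from $\xi\cup R_t\subseteq\zeta_0\cup R_t$ and $\kconj{\xi}\cup R_{k+1-t}\subseteq\kconj{\zeta_0}\cup R_{k+1-t}$ by cancellation, whence $\xi\wcover\zeta_0$ by~(\ref{wo_Pk}).

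The only genuinely non-formal input is the cited description of $\{\mu\in\Pk:R_t\preceq\mu\}$ as $\{\nu\cup R_t:\nu\in\Pk\}$, equivalently the implication ``$R_t\subseteq\mu$ and $R_{k+1-t}\subseteq\kconj{\mu}$ $\Rightarrow$ $\mu=R_t\cup\nu$''; this is the step I expect to be the main obstacle, since it requires controlling $\kconj{\mu}$ when $R_t$ sits inside $\mu$ in a ``blocked'' position, i.e.\ the precise interaction of $k$-rectangles with the $\bdd/\core$ bijection, and I would quote it from \cite{MR2167475} rather than reprove it. Everything else --- the cancellation rule, the $k$-conjugate of $\sigma\cup R_t$, and the two inductions --- is routine manipulation with Definition~\ref{weakorder}.
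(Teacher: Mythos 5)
Your proposal is correct, and its skeleton coincides with the paper's: the paper's entire proof is ``cite \cite[Theorem~20]{MR2079931} for the case $P=R_{t_1}$, then iterate,'' and your reduction to a single rectangle by peeling off one $R_s$ at a time is exactly that iteration. The difference is that you then open up the single-rectangle case instead of citing it wholesale. Your derivation there is sound: the cancellation rule, the identity $\kconj{(\sigma\cup R_t)}=\kconj{\sigma}\cup R_{k+1-t}$, and the two chain arguments (adjoining $R_t$ to a saturated chain for ``if'', and the single-cover analysis $\xi\cup R_t\wcover\zeta\Rightarrow\zeta=\zeta_0\cup R_t$ with $\xi\wcover\zeta_0$ for ``only if'') all check out against the covering relation (\ref{wo_Pk}). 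Two caveats worth recording. First, you have correctly identified that all the weight sits on the double-containment characterization ``$R_t\subseteq\mu$ and $R_{k+1-t}\subseteq\kconj{\mu}$ $\Rightarrow$ $\mu=R_t\cup\nu$''; but this statement (together with the $k$-conjugation identity) is essentially the content of \cite[Theorem~20]{MR2079931} itself rather than an independent, more primitive fact — so your argument relocates the citation rather than eliminating it, and the natural reference for both of your black boxes is \cite{MR2079931} (where they appear en route to Theorem~20) rather than \cite{MR2167475}. Second, what your version buys is a cleanly isolated statement of exactly which combinatorial input about $k$-rectangles is needed, which is arguably more informative than the paper's bare citation; what it costs is nothing, since the logic is unchanged. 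No gap.
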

\begin{proof}
	See \cite[Theorem 20]{MR2079931} for
	the case where $m=1$ and $a_1=1$ (i.e.\ $P=R_{t_1}$).
	The general case follows by using this case repeatedly.
\end{proof}

\begin{prop}\label{ws_Rt}
For $\nu,\la \in \Pk$ and 
	$P=R_{t_1}^{a_1}\cup\cdots\cup R_{t_m}^{a_m}$ 
	$($$1\le t_1< \cdots < t_m \le k$ and $a_1,\dots,a_m\in\Z_{>0}$$)$, 
\[
	\text{$\core(\nu)/\core(\la)$ is a weak strip}
	\iff 
	\text{$\core(\nu\cup P) / \core(\la\cup P)$ is a weak strip}
\]
\end{prop}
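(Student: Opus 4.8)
The plan is to reduce the statement, via characterization (3) of Definition-Proposition~\ref{weakstrip}, to two transfer assertions about horizontal and vertical strips. Characterization (3) says that, for cores $\core(\zeta)\subset\core(\eta)$, the skew shape $\core(\eta)/\core(\zeta)$ is a weak $r$-strip exactly when (i) $\eta/\zeta$ is a horizontal strip, (ii) $\kconj{\eta}/\kconj{\zeta}$ is a vertical strip, and (iii) $|\eta|=|\zeta|+r$. Since $|\nu\cup P|-|\la\cup P|=|\nu|-|\la|$, condition (iii) is insensitive to $\cup P$, so it suffices to prove that (i) holds for the pair $(\nu,\la)$ if and only if it holds for $(\nu\cup P,\la\cup P)$, and likewise for (ii).

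For (i) I will use the elementary facts that conjugation turns $\cup$ into pointwise addition of conjugates, $(\alpha\cup\beta)'=\alpha'+\beta'$; that $\alpha\subset\beta\iff\alpha\cup P\subset\beta\cup P$; and that a skew shape $\eta/\zeta$ is a horizontal strip precisely when $\zeta\subset\eta$ and $\eta'_c-\zeta'_c\le1$ for every column index $c$. Combining these, $(\nu\cup P)'_c-(\la\cup P)'_c=\nu'_c-\la'_c$ for all $c$, so the inequalities expressing ``$(\nu\cup P)/(\la\cup P)$ is a horizontal strip'' coincide with those expressing ``$\nu/\la$ is a horizontal strip''; this settles (i) in both directions.

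For (ii) I will first record the compatibility of $k$-conjugation with adjunction of $k$-rectangles, namely $\kconj{(\mu\cup R_t)}=\kconj{\mu}\cup R_{k+1-t}$, so that $\kconj{(\mu\cup P)}=\kconj{\mu}\cup\wti{P}$ with $\wti{P}=R_{k+1-t_1}^{a_1}\cup\dots\cup R_{k+1-t_m}^{a_m}$, again a union of $k$-rectangles. Passing to conjugates, ``$\kconj{\nu}/\kconj{\la}$ is a vertical strip'' reads $\kconj{\la}\subset\kconj{\nu}$ and $(\kconj{\nu})'_{j+1}\le(\kconj{\la})'_j$ for all $j$; since $(\wti{P})'$ is a partition, $(\wti{P})'_{j+1}\le(\wti{P})'_j$, and adding $(\wti{P})'$ to both sides preserves these inequalities. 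This yields the implication ``(ii) for $(\nu,\la)$ $\Rightarrow$ (ii) for $(\nu\cup P,\la\cup P)$'', hence, together with the treatment of (i) and (iii), the implication ``$\Rightarrow$'' of the proposition. For the converse I cannot simply reverse this, since a vertical strip need not remain one after deleting a common $\cup$-summand; here I will additionally invoke the already-transferred condition (i) (so that $\nu/\la$ is a horizontal strip) and the fact---obtained from Corollary~\ref{ws_rect_P}, from the gradedness of the weak order on $\Pk$ by $|\cdot|$, and from the cancellativity of $\cup P$---that a weak strip $\core(\nu\cup P)/\core(\la\cup P)$ comes with a weak-order chain, forcing $\la\cup P\preceq\nu\cup P$, hence $\la\preceq\nu$ and in particular $\kconj{\la}\subset\kconj{\nu}$. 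I expect that $\la\preceq\nu$ and ``$\nu/\la$ a horizontal strip'', combined with ``$(\kconj{\nu}\cup\wti{P})/(\kconj{\la}\cup\wti{P})$ a vertical strip'', pin down the cells of $\kconj{\nu}/\kconj{\la}$ finely enough to force it to be a vertical strip.

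I expect the main obstacle to be precisely this last step. Unlike (i), (iii), and the easy half of (ii), recovering ``$\kconj{\nu}/\kconj{\la}$ is a vertical strip'' from its image after $\cup\wti{P}$ fails for arbitrary partitions, so one must genuinely use the horizontal-strip condition and the weak-order comparability together, and making this precise is where the real work lies. An alternative, possibly cleaner, organization is to use characterization (1): its ``saturated chain'' half transfers at once because $\la\preceq\nu\iff\la\cup P\preceq\nu\cup P$ (by Corollary~\ref{ws_rect_P} together with cancellativity) and the weak order is graded by $|\cdot|$, so the whole statement reduces to showing that, whenever $\la\preceq\nu$, the core skew shape $\core(\nu)/\core(\la)$ is a horizontal strip if and only if $\core(\nu\cup P)/\core(\la\cup P)$ is---which one can then attack through characterization (3), hence through conditions (i) and (ii) above, or directly from the recursive description of $\core$ in Lemma~\ref{core_j}.
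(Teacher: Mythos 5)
Your reduction via characterization (3) of Definition--Proposition \ref{weakstrip} is sound in outline, and the pieces you actually complete are correct: condition (iii) is insensitive to $\cup P$; the horizontal-strip condition (i) transfers in both directions by the column-count computation $(\nu\cup P)'_c-(\la\cup P)'_c=\nu'_c-\la'_c$; and the forward half of the vertical-strip condition (ii) follows from $\kconj{(\mu\cup P)}=\kconj{\mu}\cup\wti{P}$ (a Lapointe--Morse fact that this paper nowhere establishes, so it needs a citation) together with the monotonicity of $(\wti{P})'$. The problem is that the converse of (ii) --- recovering that $\kconj{\nu}/\kconj{\la}$ is a vertical strip from the corresponding statement after adjoining $\wti{P}$ --- is exactly where the content of the proposition lives, and you leave it as an expectation rather than a proof. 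The failure of the naive converse is not hypothetical: for arbitrary partitions one can have $A\subset B$ with $(B\cup Q)/(A\cup Q)$ a vertical strip while $B/A$ is not (take $A=(1)$, $B=(3)$, $Q=(2)$: then $(3,2)/(2,1)$ is a vertical strip but $(3)/(1)$ has two cells in one row). So one must genuinely combine the horizontal-strip condition on $\nu/\la$, the comparability $\la\preceq\nu$ extracted from Corollary \ref{ws_rect_P}, and the special staircase shape of $\wti{P}$; none of that analysis is carried out. Your alternative organization via characterization (1) correctly isolates the same residual claim (that ``$\core(\nu)/\core(\la)$ is a horizontal strip of cores'' transfers under $\cup P$ given $\la\preceq\nu$), but again does not prove it, so the proposal as written establishes only one implication.

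For comparison, the paper does not reprove this statement from scratch either: its proof consists of quoting the case $P=R_{t_1}$ from the proof of \cite[Theorem 40]{MR2331242} and iterating over the rectangles of $P$. If you want a self-contained argument, the missing step is most naturally attacked not through $k$-conjugates but through the explicit row-by-row description of $\core(\mu\cup R_t)$ in terms of $\core(\mu)$ given by Lemma \ref{core_tila_i} (as is done in the proof of Lemma \ref{p=p} for a related transfer statement): that description lets you match the rows of $\core(\nu\cup R_t)/\core(\la\cup R_t)$ with those of $\core(\nu)/\core(\la)$ and compare the two horizontal-strip conditions directly, after which the general $P$ follows by iteration exactly as in your (and the paper's) setup.
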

\begin{proof}
	The case where $m=1$ and $a_1=1$ (i.e.\ $P=R_{t_1}$) is
	proved in the proof of \cite[Theorem 40]{MR2331242}.	
	The general case follows by using this case repeatedly.
\end{proof}

\begin{coro}\label{weaks_rect_P}
	For $\eta,\la \in \Pk$ and 
	$P=R_{t_1}^{a_1}\cup\cdots\cup R_{t_m}^{a_m}$ 
	$($$1\le t_1< \cdots < t_m \le k$ and $a_1,\dots,a_m\in\Z_{>0}$$)$, 
\[
	\text{$\core(\mu)/\core(\la\cup P)$ is a weak strip} \iff 
	\exists \nu \in \Pk,
	\begin{cases}
		\mu = \nu \cup P, \\
		\text{$\core(\nu)/\core(\la)$ is a weak strip}.
	\end{cases}
\]
\end{coro}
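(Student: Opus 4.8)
The plan is to deduce this corollary purely formally from Proposition \ref{ws_Rt} and Corollary \ref{ws_rect_P}, so that no new combinatorics is needed.

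For the ``$\Longleftarrow$'' direction I would simply take the $\nu$ furnished by the hypothesis, apply Proposition \ref{ws_Rt} to conclude that $\core(\nu\cup P)/\core(\la\cup P)$ is a weak strip, and then substitute $\mu=\nu\cup P$ to get exactly the left-hand side.

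For the ``$\Longrightarrow$'' direction, the first step is to turn the weak-strip hypothesis into an inequality in the weak order. By Definition-Proposition \ref{weakstrip}(1), a weak strip $\core(\mu)/\core(\la\cup P)$ comes equipped with a saturated chain $\core(\la\cup P)\prec\cdots\prec\core(\mu)$, so $\core(\la\cup P)\preceq\core(\mu)$; transporting this along the order isomorphism $\Pk\simeq\Cn$ of Definition-Proposition \ref{weakorder} gives $\la\cup P\preceq\mu$ (note $\mu\in\Pk$ is implicit in the hypothesis, as $\core(\mu)$ is written). Then Corollary \ref{ws_rect_P} produces a $\nu\in\Pk$ with $\mu=\nu\cup P$ and $\la\preceq\nu$. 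It remains to upgrade ``$\la\preceq\nu$'' to ``$\core(\nu)/\core(\la)$ is a weak strip'': for this I would rewrite the given weak strip as $\core(\nu\cup P)/\core(\la\cup P)$ (using $\mu=\nu\cup P$) and invoke the ``only if'' part of Proposition \ref{ws_Rt}.

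The only point requiring a little care — and the closest thing to an obstacle here — is bookkeeping of the directions of the implications, in particular making sure one extracts the genuine inequality $\la\cup P\preceq\mu$, rather than mere comparability, before applying Corollary \ref{ws_rect_P}, and that Proposition \ref{ws_Rt} is applied in the correct direction at the end. Both are immediate once the definitions are unwound, so I expect no real difficulty.
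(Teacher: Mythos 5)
Your proposal is correct and follows essentially the same route as the paper: for ``$\Longrightarrow$'' it extracts $\la\cup P\preceq\mu$ from the weak-strip chain, applies Corollary \ref{ws_rect_P} to write $\mu=\nu\cup P$, and then uses Proposition \ref{ws_Rt} to get the weak strip $\core(\nu)/\core(\la)$; for ``$\Longleftarrow$'' it applies Proposition \ref{ws_Rt} directly. No issues.
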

\begin{proof}
	$\Lra$:
	We have $\la\cup P\preceq\mu$ by the definition of weak strips.
	Thus we can write $\mu=\exists\nu\cup P$ 
	by Proposition \ref{ws_rect_P}.
	Then we have that $\core(\nu)/\core(\la)$ is a weak strip by Proposition \ref{ws_Rt}.

	$\Longrightarrow$:
	By Proposition \ref{ws_Rt}.
\end{proof}

\begin{lemm}\label{core_tila_i}
Let $\la\in\Pk$, $1\le t\le k$,
and let $r\in\Z_{\ge 0}$ such that
$\la_r \ge t \ge \la_{r+1}$, where we regard $\la_0=\infty$.
Put $\tilde\la = \la \cup R_t$.
Then
\[
\core(\ti\la)_i = 
\begin{cases}
	\core(\la)_i+t & \ (\text{if $i\le r+(k+1-t)$}), \\
	\core(\la)_{i-(k+1-t)} & \ (\text{if $i\ge (r+1)+(k+1-t)$}).
\end{cases}
\]
\end{lemm}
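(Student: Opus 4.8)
The plan is to treat Lemma~\ref{core_j} as a recursive \emph{characterization} of $\core(\mu)$ for $\mu\in\Pk$ and to verify that the claimed formula produces a sequence obeying that recursion. First I would record the parts of $\ti\la=\la\cup R_t$ explicitly: writing $d=k+1-t$ for the number of rows of $R_t$, the hypothesis $\la_r\ge t\ge\la_{r+1}$ forces
\[
\ti\la_i=
\begin{cases}
\la_i & (1\le i\le r),\\
t & (r<i\le r+d),\\
\la_{i-d} & (i>r+d).
\end{cases}
\]
Let $c$ be the sequence defined by the right-hand side of the asserted equality, i.e.\ $c_i=\core(\la)_i+t$ for $i\le r+d$ and $c_i=\core(\la)_{i-d}$ for $i\ge r+d+1$; the goal is to show $c=\core(\ti\la)$.

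The key observation is that Lemma~\ref{core_j} pins down $\core(\mu)$ uniquely among integer sequences that vanish for large index: for $j>l(\mu)$ one has $\mu_j=0$, so the relation gives $c_j=c_{j+k+1}$ and hence $c_j=0$; and then, reading $j=l(\mu),l(\mu)-1,\dots,1$, the relation $c_j=c_{j+(k+1-\mu_j)}+\mu_j$ determines $c_j$ from an already-computed entry because $1\le k+1-\mu_j\le k$. In particular this gives $\core(\la)_j=0$ for $j>l(\la)$, so the sequence $c$ above vanishes once $i-d>l(\la)$ (note $r\le l(\la)$ since $\la_r\ge t\ge1$). Hence it suffices to check that $c$ satisfies $c_i=c_{i+k+1-\ti\la_i}+\ti\la_i$ for every $i\ge1$, and then conclude $c=\core(\ti\la)$ by uniqueness.

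For that verification I would split according to the three ranges of $i$ above. If $1\le i\le r$ then $\ti\la_i=\la_i\ge t$, so $k+1-\la_i\le d$ and the shifted index satisfies $i+(k+1-\la_i)\le r+d$; thus $c_{i+k+1-\la_i}=\core(\la)_{i+k+1-\la_i}+t$, and the desired identity becomes exactly Lemma~\ref{core_j} for $\la$ at row $i$. If $r<i\le r+d$ then $\ti\la_i=t$, $k+1-\ti\la_i=d$, and $i+d>r+d$, so $c_{i+d}=\core(\la)_i$ while $c_i=\core(\la)_i+t$, and the identity holds trivially. If $i>r+d$, set $j=i-d>r$; then $\ti\la_i=\la_j\le t$, so $k+1-\la_j\ge d$ and $i+(k+1-\la_j)=j+d+(k+1-\la_j)\ge j+2d\ge r+d+1$, whence $c_{i+k+1-\la_j}=\core(\la)_{j+k+1-\la_j}$, and the identity reduces once more to Lemma~\ref{core_j} for $\la$ at row $j$.

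The main obstacle is precisely this index bookkeeping: in each case one must be sure that the shifted index $i+(k+1-\ti\la_i)$ lands in the branch of the piecewise formula that makes the reduction to Lemma~\ref{core_j} go through, and this is where the two halves $\la_i\ge t$ (for $i\le r$) and $\la_i\le t$ (for $i>r$) of the hypothesis, together with $d\ge1$, are used. A minor point worth noting is that the argument makes no use of which admissible $r$ is chosen when $\la$ has several parts equal to $t$: any such $r$ yields the same core, which is automatic since the only ingredient invoked is Lemma~\ref{core_j} for $\la$ itself.
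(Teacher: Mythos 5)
Your proof is correct. It reaches the same identity by a noticeably different logical route, even though the index bookkeeping in the three ranges is essentially the same computation the paper performs. The paper first establishes the branch $i\ge (r+1)+(k+1-t)$ directly from the procedural definition of $\core$: the sliding algorithm processes rows from the smallest part downward, so $\core(\mu)_j$ depends only on $(\mu_j,\mu_{j+1},\dots)$, and the tails of $\ti\la$ and $\la$ coincide up to the shift by $k+1-t$. It then sweeps downward through $i=r+(k+1-t),\dots,1$, applying Lemma \ref{core_j} at each step to express $\core(\ti\la)_i$ in terms of an entry already computed. You instead upgrade Lemma \ref{core_j} (together with vanishing at large index) to a uniqueness principle and verify the candidate sequence wholesale against the recursion. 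What your route buys is that the only property of $\core$ ever invoked is Lemma \ref{core_j} itself --- you never appeal to the sliding construction --- at the modest cost of having to justify the uniqueness claim, which you do correctly: the relation pushes the index strictly upward since $1\le k+1-\mu_j\le k$ for $j\le l(\mu)$, and forces the tail to vanish via $c_j=c_{j+k+1}$. Both arguments ultimately hinge on the same two inequalities, $k+1-\la_i\le k+1-t$ for $i\le r$ and $k+1-\la_j\ge k+1-t$ for $j>r$, to land the shifted index in the correct branch of the piecewise formula, and your observation that the conclusion is independent of the choice of admissible $r$ matches the remark following the lemma in the paper.
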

\begin{proof}
The latter
case
is obvious since
$\ti\la_{i+(k+1-t)}=\la_{i}$ for $i\ge r+1$.

For $i = r+(k+1-t),\ldots,r+1$,
\begin{align*}
	\core(\ti\la)_i&=\core(\ti\la)_{i+k+1-\ti\la_i} + \ti\la_i \qquad \text{(by Lemma \ref{core_j})}\\
				   &=\core(\ti\la)_{i+k+1-t}+t \qquad \text{(since $\ti\la_i=t$)} \\
				   &=\core(\la)_{i} + t. \qquad \text{(by the latter case)}
\end{align*}
Then for $i=r,r-1,\ldots,1$, by descending induction on $i$,
\begin{align*}
	\core(\ti\la)_i&= \core(\ti\la)_{i+k+1-\ti\la_i}+\ti\la_i & \text{(by Lemma \ref{core_j})}\\
				   &= \core(\ti\la)_{\scriptsize{\underbrace{i+k+1-\la_i}_{\le r+k+1-t}}} + \la_i &\text{(since $i\le r$)} \\
				   &= \core(\la)_{i+k+1-\la_i} + t + \la_i &\text{(induction hypothesis)} \\
				   &= \core(\la)_i + t. &\text{(by Lemma \ref{core_j})}
\end{align*}
\end{proof}

\noindent{\it Remark.}
There are more than one candidates for $r$
if $\la$ has a part equal to $t$,
thus in such situations both equalities of the above lemma may hold for some $i$.

\section{Possibility of factoring out $\kks{\RRma}$ and some other general results}\label{sec:general}
Recall how to prove the formula $\ks{R_t \cup \la}=\ks{R_t}\ks{\la}$ in \cite{MR2331242}:
first consider a linear map $\Theta$ extending $\ks{\la}\mapsto\ks{R_t\cup\la}$ for all $\la\in\Pk$.
Then from the weak Pieri rule it was shown that it commutes with the multiplication by $h_r$,
and thus that $\Theta$ coincides with the multiplication by $\ks{R_t}$.
In the case of $K$-$k$-Schur functions, a similar map $\Theta$ does not commute with the multiplication of $h_r$ since the Pieri rule is different in lower terms.
However, it holds that $\kks{R_t}$ divides $\kks{R_t \cup \la}$.
We prove it in a slightly more general form.

The following notation is often referred later:
\begin{itemize}
	\item[\NP]\label{nota:NP}
		Let $1\le t_1,\dots,t_m \le k$ be distinct integers
		and $a_i\in\Z_{>0}\ (1\le i\le m)$,
		where $m\in\Z_{>0}$.
		Then we put
		\begin{align*}
			P&=\RRma,	 \\
			\al{u} &= \#\{t_v\mid 1\le v \le m,\ t_v\ge u\} \quad\text{for each $u\in\Z_{>0}$}.
		\end{align*}
\end{itemize}

\begin{prop}\label{P_factor} 
Let $P$ be as in the above \NP.
Then, for $\lambda=(\lambda_1,\cdots,\lambda_l) \in \Pk$, 
we have $\kks{P} | \kks{\lambda \cup P}$ in the ring $\Lk$.
\end{prop}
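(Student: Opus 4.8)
The plan is to follow the skeleton of the proof of the $k$-rectangle property for $k$-Schur functions (Proposition~\ref{prop:kS_fac}) recalled at the start of this section — pass to the linear operator $\Theta\colon\kks{\mu}\mapsto\kks{\mu\cup P}$ — but to replace the crucial step ``$\Theta$ commutes with multiplication by $h_r$, hence $\Theta$ is multiplication by $\kks{P}$'', which fails here because the Pieri rule~(\ref{eq:Pieri}) carries extra lower-order terms, by a direct induction. (Knowing only that $\sum_\mu\Z\kks{\mu\cup P}$ is an $\Lk$-submodule of $\Lk$ containing $\kks{P}$ would give the containment $\kks{P}\Lk\subseteq\sum_\mu\Z\kks{\mu\cup P}$, which is the wrong direction for divisibility.) Concretely I would prove, for every $\la\in\Pk$, the assertion $S(\la)$: ``$\kks{\la\cup P}$ lies in the principal ideal $\kks{P}\Lk$'', by induction along the well-founded order on $\Pk$ that compares partitions first by size $|\la|$ (increasing) and, among partitions of a fixed size, by the dominance order $\unrhd$ read downward (so dominance-larger partitions are treated first). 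The base case $\la=\emptyset$ is immediate since $\kks{\emptyset\cup P}=\kks{P}$.

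For the inductive step, write $\la=(\la_1,\dots,\la_l)$ with $l\ge 1$, put $r=\la_l$ (so $1\le r\le k$) and $\bla=(\la_1,\dots,\la_{l-1})$, so that $(\bla\cup P)\cup(r)=\la\cup P$. Applying the Pieri rule~(\ref{eq:Pieri}) to $h_r\cdot\kks{\bla\cup P}$ and using Corollary~\ref{weaks_rect_P} — which says that the cores $\core(\mu')$ with $\core(\mu')/\core(\bla\cup P)$ a weak strip are precisely those of the form $\core(\nu\cup P)$ with $\core(\nu)/\core(\bla)$ a weak strip — shows that \emph{every} $K$-$k$-Schur function occurring on the right-hand side has the shape $\kks{\nu\cup P}$. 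Combining this with the normalization recalled just after Proposition~\ref{Pieri} (applied to $\sigma=\bla\cup P$: $h_r\kks{\sigma}=\kks{\sigma\cup(r)}+\sum_\xi a_\xi\kks{\xi}$ with every $\xi$ satisfying $|\xi|<|\sigma\cup(r)|$ or $\xi\rhd\sigma\cup(r)$) gives
\[
\kks{\la\cup P}=h_r\cdot\kks{\bla\cup P}-\sum_{\nu}a_{\nu\cup P}\,\kks{\nu\cup P},
\]
where each $\nu\cup P$ that appears satisfies $|\nu|<|\la|$ or $\nu\cup P\rhd\la\cup P$. Since $|\bla|<|\la|$, the inductive hypothesis $S(\bla)$ gives $\kks{\bla\cup P}\in\kks{P}\Lk$, hence $h_r\cdot\kks{\bla\cup P}\in\kks{P}\Lk$ because this is an ideal; so it remains only to invoke $S(\nu)$ for each $\nu$ in the sum, i.e.\ to check that each such $\nu$ precedes $\la$ in the chosen order.

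The one point that genuinely needs an argument is therefore the implication: if $|\nu|=|\la|$ and $\nu\cup P\rhd\la\cup P$, then $\nu\rhd\la$. This is a formal property of the operation $\cup$: conjugation turns $\cup$ into componentwise addition, $(\alpha\cup\gamma)'=\alpha'+\gamma'$, and since dominance is reversed by conjugation and preserved under componentwise addition of a fixed partition, one gets, for partitions of equal size, $\nu\cup P\rhd\la\cup P\iff(\la\cup P)'\rhd(\nu\cup P)'\iff\la'+P'\rhd\nu'+P'\iff\la'\rhd\nu'\iff\nu\rhd\la$. The terms with $|\nu|<|\la|$ are of course handled directly by the size part of the induction, so the inductive step — and hence the proposition — follows. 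I expect this dominance-compatibility observation, together with setting up the two-level well-founded induction carefully, to be the only real subtlety; everything else is routine bookkeeping with Corollaries~\ref{ws_rect_P} and~\ref{weaks_rect_P} and Proposition~\ref{ws_Rt}.
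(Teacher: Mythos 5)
Your proposal is correct and follows essentially the same route as the paper's own proof: induction on the order comparing size and then dominance, peeling off the last part $\la_l$ and multiplying $\kks{\bla\cup P}$ by $h_{\la_l}=\kks{(\la_l)}$, with Corollary~\ref{weaks_rect_P} guaranteeing that every term in the resulting Pieri expansion has the form $\kks{\nu\cup P}$. The only difference is that you explicitly verify the translation $\nu\cup P\rhd\la\cup P\iff\nu\rhd\la$ via conjugation, a point the paper's proof asserts without comment.
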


\noindent\textit{Remark.} 
Note that $\la$ may still have the form $\la=R_t\cup\mu$.
Hereafter we will not repeat the same remark in similar statements.

\begin{proof}
we prove it by induction on $\lambda$,
with respect to
the order $\leq$ defined by
$\mu\leq\la \iff |\mu|<|\la|$ or ($|\mu|=|\la|$ and $\mu\rhd\la$).
The statement is obvious when $\lambda = \emptyset$.

Assume $\la \neq \emptyset$ and
put $\hat{\lambda} = (\lambda_1,\cdots,\lambda_{l-1})$. Then
$$\kks{P\cup\hat\lambda} \cdot \kks{(\lambda_l)} = 
\kks{P\cup\lambda} + \sum_{\mu} a_{\lambda\mu}\kks{P\cup \mu}$$
for some coefficients $a_{\lambda\mu}$,
since adding a weak strip to $P\cup \hat\la$ yields a $k$-bounded partition
in the form of $P\cup \mu$ for some $\mu\in\Pk$,
by Proposition \ref{ws_rect_P}.
Here $\mu$ in the summation runs under the condition
$|\mu| < |\lambda|$ or $\mu \rhd \lambda$.
By induction hypothesis $\kks{P\cup\hat\lambda}$ and $\kks{P\cup\mu}$ are
divisible by $\kks{P}$ if $|\mu|<|\lambda|$ or $\mu \rhd \lambda$.
This completes the proof.
\end{proof}

Since the homogeneous part of highest degree of $\kks{\la}$ is equal to $\ks{\la}$ for any $\la$,
it follows from Propositions \ref{prop:kS_fac} and \ref{P_factor} that
\begin{coro}
	Let $P$ be as in \NP.
	Then, for any $\la\in\Pk$,
	we can write
	$$
	\kks{P \cup \la} 
	= \kks{P} \left(\kks{\la} + \sum_{\mu} a_{\la\mu}\kks{\mu}\right),
	$$
	summing over $\mu\in\Pk$ such that $|\mu|<|\la|$,
	for some coefficients $a_{\la\mu}$ (depending on $P$).
\end{coro}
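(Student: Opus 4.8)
The plan is to combine the divisibility from Proposition \ref{P_factor} with a comparison of top homogeneous components, exploiting the fact that the highest-degree part of $\kks{\nu}$ is $\ks{\nu}$ for every $\nu\in\Pk$. First I would apply Proposition \ref{P_factor} to write $\kks{P\cup\la}=\kks{P}\cdot f$ for some $f\in\Lk$; since $\Lk=\Z[h_1,\dots,h_k]$ is an integral domain and $\kks{P}\neq 0$, this $f$ is uniquely determined, and it suffices to show $f=\kks{\la}+\sum_{|\mu|<|\la|}a_{\la\mu}\kks{\mu}$ for suitable $a_{\la\mu}$.

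Next I would pin down the top homogeneous component of $f$. Writing each $\kks{\nu}=\ks{\nu}+(\text{strictly lower degree})$, the top components of $\kks{P}$ and $\kks{P\cup\la}$ are $\ks{P}$ and $\ks{P\cup\la}$. Applying Proposition \ref{prop:kS_fac} repeatedly to the $k$-rectangles $R_{t_1},\dots,R_{t_m}$ (with multiplicities $a_1,\dots,a_m$) gives $\ks{P\cup\la}=\ks{P}\ks{\la}$ and in particular $\ks{P}=\ks{R_{t_1}}^{a_1}\cdots\ks{R_{t_m}}^{a_m}\neq 0$. If $f=f_d+(\text{lower})$ with $f_d\ne 0$ its top component of degree $d$, then comparing top components in $\kks{P\cup\la}=\kks{P}\,f$ yields $\ks{P}\ks{\la}=\ks{P}\,f_d$, so $f_d=\ks{\la}$ and $d=|\la|$, using that $\Lk$ has no zero divisors.

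Finally I would expand $f$ in the $K$-$k$-Schur basis, $f=\sum_\mu c_\mu\kks{\mu}$, and set $d'=\max\{|\mu|\mid c_\mu\neq 0\}$. The top component of $f$ then equals $\sum_{|\mu|=d'}c_\mu\ks{\mu}$, which by the previous paragraph must equal $\ks{\la}$; since $\{\ks{\mu}\}_{\mu\in\Pk}$ is a basis of $\Lk$, this forces $d'=|\la|$, $c_\la=1$, and $c_\mu=0$ for every other $\mu$ with $|\mu|\ge|\la|$. Hence $f=\kks{\la}+\sum_{|\mu|<|\la|}c_\mu\kks{\mu}$, and putting $a_{\la\mu}=c_\mu$ finishes the argument. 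The statement is essentially a formal corollary of the two cited propositions, so there is no deep obstacle; the only point needing care is the degree bookkeeping in the middle step — in particular the cancellation of $\ks{P}$, which is legitimate precisely because $\Lk$ is an integral domain — and the routine iteration of Proposition \ref{prop:kS_fac} to obtain $\ks{P\cup\la}=\ks{P}\ks{\la}$.
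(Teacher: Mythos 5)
Your proposal is correct and follows essentially the same route as the paper, which simply notes that the corollary follows from Proposition \ref{P_factor} together with the fact that the top homogeneous component of $\kks{\nu}$ is $\ks{\nu}$ and the $k$-Schur factorization of Proposition \ref{prop:kS_fac}. You have merely made explicit the degree bookkeeping that the paper leaves to the reader, and all of those steps check out.
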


\vspace{2mm}
Now we are interested in finding a explicit description of $\kks{P\cup\la}\big/\kks{P}$.
Let us consider the case $P=R_t$ for simplicity.


As noted above, 
a linear map $\Theta$ extending $\kl \mapsto \kRtl$ ($\forall\la\in\Pk$) does not
coincide with the multiplication of $\kRt$
because it does not commute with the multiplication by $h_r$ in the first place.

However,
in the remaining part of this section, we can prove that the restriction of $\Theta$ to the subspace spanned by $\{\kks{R_t\cup\mu}\}_{\mu\in\Pk}$ (in fact this is the principal ideal generated by $\krt$)
commutes with the multiplication by $h_r$,
and thus it coincides with the multiplication of $\Theta(\krt)\big/\krt = \kks{R_t\cup R_t}\big/\krt$ on that ideal (Proposition \ref{Rt_twice}).
Thus it is of interest to describe the value of
$\kks{R_t\cup R_t}\big/\krt$,
which is shown to be $\sum_{\nu\subset R_t}\kks{\nu}$
in the author's following paper \cite{Takigiku}.

\vspace{2mm}
Now let us begin with seeing how $\Theta$ and the multiplication by $h_r$ do \textit{not} commute.
Recall the $K$-$k$-Schur version of the Pieri rule (\ref{eq:Pieri})
\begin{equation*}
h_r \cdot \kks{\la} = \sum_{s=0}^{r} (-1)^{r-s} 
\sum_{\substack{\nu \\ \text{$\core(\nu)/\core(\la)$:weak $s$-strip}}} \binom{r_{\core(\nu)\core(\la)}}{r-s} \kks{\nu},
\end{equation*}
and compare with the formula obtained by
replacing $\la$ with $R_t \cup \la$:
\begin{equation}\label{eq:Pieri_Rt_la}
h_r \cdot \kks{R_t\cup\la} = \sum_{s=0}^{r} (-1)^{r-s} 
\sum_{\substack{\eta \\ \text{$\core(\eta)/\core(R_t\cup\la)$:weak $s$-strip}}} \binom{r_{\core(\eta)\core(R_t\cup\la)}}{r-s} \kks{\eta}.
\end{equation}
By Corollary \ref{weaks_rect_P},
the summation in (\ref{eq:Pieri_Rt_la}) is formed for all $\eta$
having the form $\eta=R_t\cup\nu$ such that $\core(\nu)/\core(\la)$ is a weak $s$-strip.
Hence the right-hand side of (\ref{eq:Pieri_Rt_la}) differs from
what is obtained by replacing each $\nu$ in the right-hand side of (\ref{eq:Pieri}) by $R_t\cup\nu$
according to the
difference between 
$r_{\core(\nu)\core(\la)}$ and 
$r_{\core(R_t\cup\nu)\core(R_t\cup\la)}$.

The next lemma says $r_{\core(R_t\cup\nu)\core(R_t\cup\la)}=r_{\core(\nu)\core(\la)}$ holds if $\la$ has a part equal to $t$.

\begin{lemm}\label{p=p}
	For $\nu,\la \in \Pk$ such that
	$\la$ has a part equal to $t$
	and $\core(\nu)/\core(\la)$ is a weak strip, we have
	$r_{\core(\nu),\core(\la)} = r_{\core(\nu\cup R_t),\core(\la\cup R_t)}$.
\end{lemm}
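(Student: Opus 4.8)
The plan is to use Lemma \ref{core_tila_i} to write $\core(\la\cup R_t)$ and $\core(\nu\cup R_t)$ explicitly in terms of $\core(\la)$ and $\core(\nu)$, and then to match up removable corners directly. First I would fix $r$ to be the \emph{largest} index with $\la_r=t$; this exists because $\la$ has a part equal to $t$, and then $\la_{r+1}<t$. Since $\core(\nu)/\core(\la)$ is a weak strip, condition (3) of Definition-Proposition \ref{weakstrip} gives that $\nu/\la$ is a horizontal strip, hence $\la\subseteq\nu$ and $\nu_{r+1}\le\la_r=t\le\nu_r$; so the \emph{same} $r$ is a legitimate choice of the index in Lemma \ref{core_tila_i} for both $\la$ and $\nu$. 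Writing $d=k+1-t$, Lemma \ref{core_tila_i} then gives
\[
\core(\la\cup R_t)_i=\begin{cases}\core(\la)_i+t & i\le r+d,\\ \core(\la)_{i-d} & i\ge r+d+1,\end{cases}
\qquad
\core(\nu\cup R_t)_i=\begin{cases}\core(\nu)_i+t & i\le r+d,\\ \core(\nu)_{i-d} & i\ge r+d+1,\end{cases}
\]
so each of $\core(\la\cup R_t)$, $\core(\nu\cup R_t)$ is obtained from $\core(\la)$, $\core(\nu)$ by adding $t$ to the first $r$ rows and inserting $d$ rows between rows $r$ and $r+1$. In addition, since $\la_r=t$, Lemma \ref{core_j} with $j=r$ gives the key identity $\core(\la)_r=\core(\la)_{r+d}+t$.

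Next I would set up a bijection between the removable corners of $\core(\la)$ and the removable corners of $\core(\la\cup R_t)$ lying in rows $\le r$ or in rows $\ge r+d+1$ (the ``old'' corners), sending the removable corner in row $i$ of $\core(\la)$ to the one in row $i$ of $\core(\la\cup R_t)$ if $i\le r$ and to the one in row $i+d$ if $i\ge r+1$. Using the displayed formulas (and that a partition has at most one removable corner per row, so corners may be indexed by their rows), it is a routine check that this map is well defined and bijective, that it shifts residues by $t$ modulo $k+1$, and --- by comparing $\core(\nu\cup R_t)_{i+1}$ with $\core(\la\cup R_t)_i$ --- that a removable corner is $\core(\nu)$-blocked precisely when its image is $\core(\nu\cup R_t)$-blocked. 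Consequently the number of distinct residues of $\core(\nu)$-nonblocked $\core(\la)$-removable corners equals the number of distinct residues of $\core(\nu\cup R_t)$-nonblocked ``old'' removable corners of $\core(\la\cup R_t)$.

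It remains to deal with the ``new'' removable corners of $\core(\la\cup R_t)$, i.e.\ those in rows $r+1,\dots,r+d$. For $1\le j\le d-1$ the corner in row $r+j$ exists exactly when $\core(\la)$ has a removable corner in row $r+j$, and one checks that it has the same residue as, and is $\core(\nu\cup R_t)$-blocked exactly when, the image of that corner under the above bijection (which lies in row $r+j+d$). For $j=d$ the corner in row $r+d$ exists exactly when $\core(\la)$ has a removable corner in row $r$ --- here the identity $\core(\la)_r=\core(\la)_{r+d}+t$ is used to rewrite the condition $\core(\la\cup R_t)_{r+d+1}<\core(\la\cup R_t)_{r+d}$ --- and it has the same residue as, and the same blocked/nonblocked status as, the image of that row-$r$ corner. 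Hence the new rows contribute no residue not already counted among the nonblocked old corners, and therefore $r_{\core(\nu\cup R_t),\core(\la\cup R_t)}=r_{\core(\nu),\core(\la)}$.

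The main obstacle is the $j=d$ case of the last step: in general the inserted block can create a genuinely new residue (and indeed the statement fails when $\la$ has no part equal to $t$, e.g.\ $k=3$, $t=2$, $\la=(1)$, $\nu=\la$, where $r$ jumps by one but the residue count jumps by one as well). So the whole point is the choice of $r$ with $\la_r=t$ and the resulting identity $\core(\la)_r=\core(\la)_{r+d}+t$, together with the observation that this same $r$ may be used for $\nu$; everything else is bookkeeping with the two displayed formulas.
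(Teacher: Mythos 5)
Your proposal is correct and follows essentially the same route as the paper's proof: choose $r$ maximal with $\la_r=t$, observe that the same $r$ works for $\nu$ since $\nu/\la$ is a horizontal strip, apply Lemma \ref{core_tila_i} to both, and match removable corners row by row so that residues shift by $t$ and the blocked/nonblocked status is preserved. The only cosmetic difference is bookkeeping: the paper handles the overlap at row $r+(k+1-t)$ by applying Lemma \ref{core_tila_i} a second time with index $r-1$, whereas you split into ``old'' and ``new'' corners and use the identity $\core(\la)_r=\core(\la)_{r+(k+1-t)}+t$ from Lemma \ref{core_j}; these are equivalent.
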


\begin{proof}
	We write
	$\tilde{\la} = \la \cup R_t$ and $\tilde{\nu} = \nu\cup R_t$.
	We take $r$ such that $\la_r = t > \la_{r+1}$\ (then $\nu_r \ge t = \la_r \ge \nu_{r+1}$ since $\nu/\la$ is a horizontal strip).
	Then we have
	\begin{align*}
		\tilde{\la}_r = \tilde{\la}_{r+1} &= \cdots = \tilde{\la}_{r+k+1-t} = t \\
		                \tilde{\nu}_{r+1} &= \cdots = \tilde{\nu}_{r+k+1-t} = t,
	\end{align*}
therefore by Lemma \ref{core_tila_i}
	\begin{align*}
		&\core(\ti\la)_i = \core(\la)_i + t   &\ \text{($i \le r+(k+1-t)$)} \\
		&\core(\ti\la)_{i} = \core(\la)_{i-(k+1-t)}  &\ \text{($i \ge r+(k+1-t)$)}
	\end{align*}
	(here we applied Lemma \ref{core_tila_i} to $\la$ and $r-1$ for the lower equation)
	and
	\begin{align*}
		&\core(\ti\nu)_i = \core(\nu)_i + t   &\ \text{($i \le r+(k+1-t)$)} \\
		&\core(\ti\nu)_{i} = \core(\nu)_{i-(k+1-t)}  &\ \text{($i > r+(k+1-t)$)}.
	\end{align*}
	\[
	\begin{tikzpicture}[scale=0.26]
\draw [red,very thick] (5,3) |- (8,2) |- (12,1) -- (12,0);
\draw [red,very thick] (5,6) |- (8,5) |- (12,4) -- (12,3);
\draw [red,very thick] (18,3) |- (21,2) |- (25,1) -- (25,0);

\draw (0,0) |- (3,4) |- (5,3) |- (8,2) |- (12,1) |- (18,0) |- (26,-1) -- (26,-2);
\draw [loosely dotted,thick] (0,0) -- (0,-2);
\draw [loosely dotted,thick] (26,-2) -- (26,-3);

\draw [pattern=north east lines] (0,4) rectangle (2,5);
\draw [pattern=north east lines] (5,2) rectangle (6,3);
\draw [pattern=north east lines] (8,1) rectangle (10,2);
\draw [pattern=north east lines] (12,0) rectangle (14,1);

\draw [pattern=dots] (-0.1,7) rectangle (2,8);
\draw [pattern=dots] (5,5) rectangle (6,6);
\draw [pattern=dots] (8,4) rectangle (10,5);
\draw [pattern=dots] (12,3) rectangle (14,4);

\draw [pattern=dots] (18,2) rectangle (19,3);
\draw [pattern=dots] (21,1) rectangle (23,2);
\draw [pattern=dots] (25,0) rectangle (27,1);

\draw [pattern=north east lines] (18,-1) rectangle (22,0);
\draw [pattern=dots] (31,-1) rectangle (35,0);

\draw [blue,decorate,decoration={zigzag,segment length=2mm,amplitude=.3mm}] (-0.1,-0.1) |- (3,7) |- (5,6) |- (8,5) |- (12,4) |- (18,3) |- (21,2) |- (25,1) |- (31,0) |- (39,-1) -- (39,-2);
\draw [blue,loosely dotted,thick] (0,0) -- (0,-2);
\draw [blue,loosely dotted,thick] (39,-2) -- (39,-3);

\draw [red,thick,->,decorate,decoration={snake,amplitude=.4mm}] (7,2) to (7,5);
\draw [<-,red] (7,3.5) to [out=10, in=260] (10,6) node[above] {\footnotesize{$k+1-t$}};
\draw [red,thick,->,decorate,decoration={snake,amplitude=.4mm}] (8,1.5) to node[above]{\footnotesize{$t$}} (21,1.5);

\draw [loosely dotted,thick] (0,0) -- (12,0);
\draw [loosely dotted,thick] (0,3) -- (3,3);
\draw [loosely dotted,thick] (6,3) -- (12,3);

\node [left] at (0,-0.5) {\footnotesize{$r$}};
\node [left] at (0,0.5) {\footnotesize{$r+1$}};
\node [left] at (0,2.5) {\footnotesize{$r+k+1-t$}};

\end{tikzpicture}
	\]	
	\[
	\left(
\text{Here }
\begin{cases}
\begin{tikzpicture}[scale=0.33]
	\draw (0,0)--(2,0);
\end{tikzpicture}
 & \text{: outline of $\core(\la)$} \\
\begin{tikzpicture}[scale=0.33]
	\draw[blue,decorate,decoration={zigzag,segment length=2mm,amplitude=.3mm}] (0,0)--(2,0);
\end{tikzpicture}
 & \text{: outline of $\core(\ti\la)$} \\
\begin{tikzpicture}[scale=0.33]
	\draw [pattern=north east lines] (0,0) rectangle (2,1);
\end{tikzpicture}
& \text{: $\core(\nu)/\core(\la)$} \\
\begin{tikzpicture}[scale=0.33]
	\draw [pattern=dots] (0,0) rectangle (2,1);
\end{tikzpicture}
& \text{: $\core(\ti\nu)/\core(\ti\la)$}
\end{cases}
\right)
	\]
	Then,
	\begin{enumerate}
		\item if $i < r+(k+1-t)$, \\ $(i,\core(\ti\la)_i)$ is a $\core(\ti\la)$-removable corner $\iff$ $(i,\core(\la)_i)$ is a $\core(\la)$-removable corner,
		\item if $i \ge r+(k+1-t)$, \\
			$(i,\core(\ti\la)_i)$ is a $\core(\ti\la)$-removable corner $\iff$ $(i-(k+1-t),\core(\la)_{i-(k+1-t)})$ is a $\core(\la)$-removable corner.
	\end{enumerate}
	Moreover,
	when $(i,\core(\ti\la)_i)$ is a $\core(\ti\la)$-removable corner (of residue $a$),
	we consider two cases:
	\begin{enumerate}
		\item if $i < r+(k+1-t)$. Then
	\begin{align*}
		\text{$(i,\core(\ti\la)_i)$ is $\core(\ti\nu)$-blocked} 
		&\iff \core(\ti\la)_i \le \core(\ti\nu)_{i+1} \\
		&\iff \core(\la)_i+t \le \core(\nu)_{i+1}+t \\
		&\iff \text{$(i,\core(\la)_i)$ is $\core(\nu)$-blocked},
	\end{align*}
	and the residue of $(i,\core(\la)_i)$ is $a-t$.
		\item if $i \ge r+(k+1-t)$. Then
	\begin{align*}
		\text{$(i,\core(\ti\la)_i)$ is $\core(\ti\nu)$-blocked} 
		&\iff \core(\ti\la)_i \le \core(\ti\nu)_{i+1} \\
		&\iff \core(\la)_{i-(k+1-t)} \le \core(\nu)_{i+1-(k+1-t)} \\
		&\iff \text{$(i-(k+1-t),\core(\la)_{i-(k+1-t)})$ is $\core(\nu)$-blocked},
	\end{align*}
	and the residue of $(i-(k+1-t),\core(\la)_{i-(k+1-t)})$ is $a-t$.
	\end{enumerate}

	Hence, for each $a \in \Z/(k+1)$, 
	there exists a non-$\core(\ti\nu)$-blocked $\core(\ti\la)$-removable $a$-corner
	if and only if
	there exists a non-$\core(\nu)$-blocked $\core(\la)$-removable $(a-t)$-corner.
	Therefore we have $r_{\core(\nu)\core(\la)} = r_{\core(\ti\nu)\core(\ti\la)}$.
\end{proof}

As a corollary of the proof of the above lemma, we have
\begin{coro}\label{theo:r_Rt}
	For any $\la,\nu\in\Pk$ and $1\le t\le k$ we have
$r_{\core(R_t\cup\nu)\core(R_t\cup\la)}=r_{\core(\nu)\core(\la)}$ or $r_{\core(\nu)\core(\la)}+1$.
\end{coro}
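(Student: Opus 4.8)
The plan is to revisit the core computation carried out in the proof of Lemma~\ref{p=p}, this time \emph{without} assuming that $\la$ has a part equal to $t$. Write $s=k+1-t$, $\ti\la=\la\cup R_t$ and $\ti\nu=\nu\cup R_t$, and let $r$ be the number of parts of $\la$ that are $\ge t$, so that $\la_r\ge t>\la_{r+1}$. By Lemma~\ref{core_tila_i} we have $\core(\ti\la)_i=\core(\la)_i+t$ for $i\le r+s$ and $\core(\ti\la)_i=\core(\la)_{i-s}$ for $i>r+s$, and likewise for $\nu$ (with its own such index $r'$). Exactly as in the proof of Lemma~\ref{p=p}, these formulas let one match removable corners and track blockedness: a $\core(\la)$-removable corner in a row $i\le r$ (resp.\ in a row $i\ge r+1$) gives a $\core(\ti\la)$-removable corner in row $i$ (resp.\ in row $i+s$) whose residue is larger by $t$, and which is $\core(\ti\nu)$-blocked exactly when the original one is $\core(\nu)$-blocked.

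If $\la$ has a part equal to $t$, then $\la_r=t$, so $\core(\la)_r=\core(\la)_{r+s}+t$ by Lemma~\ref{core_j}; the insertion is ``synchronized'', the matching above becomes a bijection between $\core(\nu)$-nonblocked $\core(\la)$-removable corners and $\core(\ti\nu)$-nonblocked $\core(\ti\la)$-removable corners, and one recovers the equality of Lemma~\ref{p=p}. If $\la$ has no part equal to $t$, then $\la_r>t$ and Lemma~\ref{core_j} gives the \emph{strict} inequality $\core(\la)_r>\core(\la)_{r+s}+t$; applying Lemma~\ref{core_j} to the row $r+1$ (where $\la_{r+1}<t$, hence $k+1-\la_{r+1}>s$) one checks that $\core(\ti\la)$ \emph{always} has a removable corner in row $r+s$, of some residue $a$, whereas the cell $(r+s,\core(\la)_{r+s})$ of $\core(\la)$ need not be removable. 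This lone corner is the only possible source of a residue occurring among the $\core(\ti\nu)$-nonblocked $\core(\ti\la)$-removable corners but not among the $\core(\nu)$-nonblocked $\core(\la)$-removable corners.

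Combining the two directions: every residue carried by a $\core(\nu)$-nonblocked $\core(\la)$-removable corner is still carried, shifted by $t$, by a $\core(\ti\nu)$-nonblocked $\core(\ti\la)$-removable corner, whence $r_{\core(\ti\nu)\core(\ti\la)}\ge r_{\core(\nu)\core(\la)}$; and the only residue that can appear on the new side without a counterpart is $a$. Therefore $r_{\core(\ti\nu)\core(\ti\la)}$ equals either $r_{\core(\nu)\core(\la)}$ (when $a$ is not new --- for instance when $(r+s,\core(\la)_{r+s})$ is itself a $\core(\nu)$-nonblocked removable corner of $\core(\la)$, or $a$ already occurs elsewhere) or $r_{\core(\nu)\core(\la)}+1$, which is the assertion of Corollary~\ref{theo:r_Rt}.

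The delicate point I expect is the bookkeeping in the band of rows affected by the insertion --- rows $r,\dots,r+s$ on the $\la$ side, and, when $r'\ne r$ (which does happen, even for a weak strip $\core(\nu)/\core(\la)$, when $\nu$ has parts $\ge t$ in rows where $\la$ does not), also the rows strictly between $r$ and $r'$. In this band one must verify that no residue is \emph{lost} (each shifted copy of a $\core(\la)$-removable corner is genuinely removable in $\core(\ti\la)$, and its blockedness is controlled by the correspondingly shifted rows of $\core(\ti\nu)$) and that the shifted copies of rows $r+1,\dots,r+s-1$ of $\core(\la)$ contribute only residues already present --- they merely duplicate the unshifted copies sitting $s$ rows higher --- so that the corner in row $r+s$ really is the sole genuinely new one. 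This is a finite, local verification of the same nature as the displayed computation in the proof of Lemma~\ref{p=p}.
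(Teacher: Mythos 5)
Your argument is the paper's own: rerun the row-by-row correspondence from the proof of Lemma \ref{p=p} with $r$ chosen so that $\la_r\ge t>\la_{r+1}$, observe that every row other than $r+(k+1-t)$ matches a $\core(\ti\nu)$-nonblocked $\core(\ti\la)$-removable corner of residue $a$ with a $\core(\nu)$-nonblocked $\core(\la)$-removable corner of residue $a-t$, and conclude that the single exceptional row can contribute at most one new residue, giving $r_{\core(\nu)\core(\la)}\le r_{\core(\ti\nu)\core(\ti\la)}\le r_{\core(\nu)\core(\la)}+1$. The bookkeeping you flag as delicate is left at the same level of detail in the paper (``do a same argument as the above lemma''), so your proposal is correct and essentially identical in approach.
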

\begin{proof}
	Take $r$ such that $\la_r\ge t>\la_{r+1}$ and do a same argument as the above lemma.

	Then we have that, if $i\neq r+(k+1-t)$, 
	there exists a $\core(\ti\nu)$-nonblocked $\core(\ti\la)$-removable $a$-corner in $i$-th row
	if and only if
	there exists a $\core(\nu)$-nonblocked $\core(\la)$-removable $(a-t)$-corner in $i'$-th row.
	(Here we put $i'=i$ if $i<r+(k+1-t)$ and $i'=i-(k+1-t)$ if $i>r+(k+1-t)$)

	Hence we have $r_{\core(\nu)\core(\la)} \le r_{\core(\ti\nu)\core(\ti\la)} \le r_{\core(\nu)\core(\la)} + 1$.
\end{proof}

\begin{prop}\label{Rt_twice}
For $\lambda \in \Pk$ and $1 \le t \le k$, we have
$\kks{\lambda\cup R_t \cup R_t} = \kks{\lambda \cup R_t} \cdot \displaystyle\frac{\kks{R_t\cup R_t}}{\kks{R_t}}$.
\end{prop}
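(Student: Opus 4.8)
The plan is to deduce the identity from the general principle that a $\Z$-linear endomorphism of the principal ideal $\krt\Lk\subset\Lk$ which commutes with multiplication by every $h_r$ must in fact be $\Lk$-linear, and hence is multiplication by a single element of $\Lk$. This is the $K$-theoretic analogue — carried out on the level of the ideal $\krt\Lk$ rather than the whole ring — of the argument for $\ks{R_t\cup\la}=\ks{R_t}\ks{\la}$ recalled at the beginning of this section.

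First I would set up the relevant map. Since $R_t\cup\la=R_t\cup\mu$ forces $\la=\mu$, the partitions $R_t\cup\la$ $(\la\in\Pk)$ are pairwise distinct, so $\{\kks{R_t\cup\la}\}_{\la\in\Pk}$ is a subfamily of the basis $\{\kks{\nu}\}_{\nu\in\Pk}$ of $\Lk$. By Proposition \ref{P_factor} and the corollary following it we may write $\kks{R_t\cup\la}=\krt\,g_\la$ with $g_\la=\kks{\la}+\sum_{|\mu|<|\la|}a_{\la\mu}\kks{\mu}$; the transition matrix from $\{\kks{\mu}\}$ to $\{g_\la\}$ is unitriangular with respect to the size filtration, so $\{g_\la\}_{\la\in\Pk}$ is a $\Z$-basis of $\Lk$ and therefore $\{\kks{R_t\cup\la}\}_{\la\in\Pk}$ is a $\Z$-basis of the ideal $\krt\Lk$. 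Hence there is a well-defined $\Z$-linear endomorphism $\Theta$ of $\krt\Lk$ given on this basis by $\Theta(\kks{R_t\cup\la})=\kks{R_t\cup R_t\cup\la}$, the target again lying in $\krt\Lk$ by Proposition \ref{P_factor} applied to the $k$-bounded partition $R_t\cup\la$.

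Next I would prove the key claim that $\Theta$ commutes with multiplication by $h_r$ for each $0\le r\le k$. As $\krt\Lk$ is an ideal, it suffices to verify $h_r\,\Theta(\kks{R_t\cup\la})=\Theta\bigl(h_r\,\kks{R_t\cup\la}\bigr)$. Expanding $h_r\,\kks{R_t\cup\la}$ by the Pieri rule (\ref{eq:Pieri}), Corollary \ref{weaks_rect_P} shows every term is of the form $\kks{R_t\cup\rho}$ with $\core(\rho)/\core(\la)$ a weak $s$-strip, so applying $\Theta$ yields $\sum_{s}(-1)^{r-s}\sum_{\rho}\binom{r_{\core(R_t\cup\rho)\core(R_t\cup\la)}}{r-s}\kks{R_t\cup R_t\cup\rho}$, summed over $s$ and over such $\rho$. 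On the other side, expanding $h_r\,\kks{R_t\cup R_t\cup\la}$ by Pieri and applying Corollary \ref{weaks_rect_P} twice — peeling off one copy of $R_t$ at a time from $R_t\cup R_t\cup\la=R_t\cup(R_t\cup\la)$ — yields $\sum_{s}(-1)^{r-s}\sum_{\rho}\binom{r_{\core(R_t\cup R_t\cup\rho)\core(R_t\cup R_t\cup\la)}}{r-s}\kks{R_t\cup R_t\cup\rho}$ over the same index set. So it remains only to match the two binomial coefficients, i.e.\ to show $r_{\core(R_t\cup\rho)\core(R_t\cup\la)}=r_{\core(R_t\cup R_t\cup\rho)\core(R_t\cup R_t\cup\la)}$ whenever $\core(\rho)/\core(\la)$ is a weak strip. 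This is exactly Lemma \ref{p=p} applied with the partition $R_t\cup\la$ in the role of ``$\la$'' there — which has a part equal to $t$, since $t\le k$ forces $k+1-t\ge1$ — and $R_t\cup\rho$ in the role of ``$\nu$'', legitimate because $\core(R_t\cup\rho)/\core(R_t\cup\la)$ is a weak strip by Proposition \ref{ws_Rt}.

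Finally, since $\Lk=\Z[h_1,\dots,h_k]$ and $\Theta$ is $\Z$-linear and commutes with each $h_r$, it is $\Lk$-linear; being a $\Lk$-linear endomorphism of the rank-one free $\Lk$-module $\krt\Lk$, it is multiplication by the unique $c\in\Lk$ with $\Theta(\krt)=c\,\krt$, and since $\Theta(\krt)=\Theta(\kks{R_t\cup\emptyset})=\kks{R_t\cup R_t}$ we get $c=\kks{R_t\cup R_t}/\krt$. Evaluating $\Theta$ at $\kks{\la\cup R_t}$ then gives $\kks{\la\cup R_t\cup R_t}=\kks{\la\cup R_t}\cdot\kks{R_t\cup R_t}/\krt$, which is the assertion. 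The only substantive step is the commutation with $h_r$, and within it the crux is the application of Lemma \ref{p=p}; the main obstacle to anticipate is recognizing that the hypothesis of that lemma — the presence of a part equal to $t$ — is automatically satisfied by $R_t\cup\la$, which is exactly why the identity holds once one copy of $R_t$ is already present but fails at the very first copy (cf.\ (\ref{intro:two})).
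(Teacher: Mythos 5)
Your proof is correct and follows essentially the same route as the paper: both define the map $\Theta$ on the ideal spanned by $\{\kks{R_t\cup\la}\}_{\la\in\Pk}$, verify commutation with multiplication by $h_r$ via the Pieri rule, Corollary \ref{weaks_rect_P} and Lemma \ref{p=p} (the crux in both being that $R_t\cup\la$ automatically has a part equal to $t$), and conclude by $\Lk$-linearity on a rank-one free module. The only cosmetic difference is that the paper defines $\Theta$ on all of $\Lk$ and restricts to the ideal, and proves the commutation in the slightly more general form $h_r\cdot\kks{\widetilde{\mu\cup(t)}}=\Theta(h_r\cdot\kks{\mu\cup(t)})$.
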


\begin{proof}
Write $\tilde\mu = \mu \cup R_t$ for $\mu \in \Pk$.

Define a linear map $\Theta : \Lk \lra \Lk$ by $\kks{\mu} \lmt \kks{\tilde\mu}$ for all $\mu\in\Pk$
and put $X = \mathrm{span}\{\kks{\tilde\la}\mid \lambda\in\Pk\}$.
Then $X$ is an ideal of $\Lk$ 
because $h_r\cdot \kks{\tilde\la}$ can be written as a linear combination of $\{\kks{\tilde\nu}\mid \nu\in \Pk\}$,
by (\ref{eq:Pieri}) and Proposition \ref{weaks_rect_P}.

Next we claim
$$ \Theta|_X \circ (h_r\cdot) = (h_r\cdot) \circ \Theta|_X\ \ :X\lra X$$
for $1\le r \le k$,
where $h_r\cdot$ denotes the multiplication by $h_r$.

\vspace{2mm}
{\it Proof of claim.}

It suffices to show $h_r\cdot \kks{\widetilde{\mu\cup R_t}} = \Theta(h_r\cdot \kks{\tilde\mu})$ for $\mu \in \Pk$.
More generally, we can show $h_r\cdot \kks{\widetilde{\mu\cup (t)}} = \Theta(h_r\cdot \kks{\mu \cup (t)})$ for $\mu \in \Pk$:

\begin{align*}
	h_r\cdot \kks{\widetilde{\mu\cup (t)}} 
	&= \sum_{s=0}^r (-1)^{r-s} 
	   \sum_{\substack{\eta \\ \core(\eta)/\core(\widetilde{\mu\cup(t)})\text{ is a weak $s$-strip}}} \binom{r_{\core(\eta),\core(\widetilde{\mu\cup(t)})}}{r-s} \kks{\eta} \\
	&= \sum_{s=0}^r (-1)^{r-s} 
	   \sum_{\substack{\nu \\ \core(\nu)/\core(\mu\cup(t))\text{ is a weak $s$-strip}}} \binom{r_{\core(\tilde\nu),\core(\widetilde{\mu\cup(t)})}}{r-s} \kks{\tilde\nu} \\
	&= \sum_{s=0}^r (-1)^{r-s}
	   \sum_{\substack{\nu \\ \core(\nu)/\core(\mu\cup(t))\text{ is a weak $s$-strip}}} \binom{r_{\core(\nu),\core(\mu\cup(t))}}{r-s} \kks{\tilde\nu} \\
	&= \Theta\bigg(\sum_{s=0}^r (-1)^{r-s} \sum_{\substack{\nu \\ \core(\nu)/\core(\mu\cup(t))\text{ is a weak $s$-strip}}} \binom{r_{\core(\nu),\core(\mu\cup(t))}}{r-s} \kks{\nu}\bigg) \\
	&= \Theta\bigg(h_r\cdot \kks{\mu\cup(t)}\bigg).
\end{align*}
Here the second equality uses Proposition \ref{weaks_rect_P}, and the third equality uses Lemma \ref{p=p}.
Hence the claim is proved.

\vspace{2mm}
Since $h_1,\ldots,h_k$ generate $\Lk$, the claim implies that $\Theta|_X$ is a $\Lk$-module homomorphism.
Hence for any $x \in X$, 
\[ x \cdot \Theta(\kks{R_t}) = \Theta(x\kks{R_t}) = \Theta(x) \cdot \kks{R_t}, \]
which implies
$\Theta(x) = x \cdot \displaystyle\frac{\kks{R_t\cup R_t}}{\kks{R_t}}$ for any $x \in X$.
Setting $x=\kks{R_t\cup\la}$ gives the proposition.
\end{proof}


\begin{theo}\label{Result_two}
Let $P=\RRma$ be as in \NP, and
put $Q=R_{t_1}\cup\cdots\cup R_{t_m}$.
Then, for $\la\in\Pk$ we have
$$ \frac{\kks{P\cup\la}}{\kks{P}}=\frac{\kks{Q\cup\la}}{\kks{Q}}.$$
\end{theo}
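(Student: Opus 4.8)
The plan is to prove the theorem by iterating the following generalization of Proposition~\ref{Rt_twice}: \emph{if $Q'$ is a multiple $k$-rectangle (an expression of the form $\RRma$) that contains at least one copy of some $R_t$, then}
\[
\kks{Q'\cup R_t\cup\la}=\kks{Q'\cup\la}\cdot\frac{\kks{Q'\cup R_t}}{\kks{Q'}}\qquad(\forall\,\la\in\Pk).
\]
Granting this, here is the reduction. Write $P=\RRma$ and $Q=\RRm$ as in the statement and put $N=\sum_{i=1}^{m}(a_i-1)$; I would induct on $N$. If $N=0$ then $P=Q$ and there is nothing to prove. If $N>0$, choose $j$ with $a_j\ge 2$ and set $P'=R_{t_1}^{a_1}\cup\dots\cup R_{t_j}^{a_j-1}\cup\dots\cup R_{t_m}^{a_m}$, so that $P=P'\cup R_{t_j}$ and $P'$ still contains a copy of $R_{t_j}$. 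The displayed identity applied with $Q'=P'$ and $t=t_j$ gives $\kks{P\cup\la}/\kks{P}=\kks{P'\cup\la}/\kks{P'}$, while the induction hypothesis applied to $P'$ (whose associated $Q$ is the same, with $N-1$ in place of $N$) gives $\kks{P'\cup\la}/\kks{P'}=\kks{Q\cup\la}/\kks{Q}$. Combining these proves the theorem.

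It therefore remains to prove the displayed identity, and for this I expect the proof of Proposition~\ref{Rt_twice} to carry over essentially word for word after replacing the base rectangle $R_t$ there by $Q'$. Concretely: let $\Theta\colon\Lk\to\Lk$ be the linear map with $\Theta(\kks{\mu})=\kks{\mu\cup R_t}$, and set $X=\kks{Q'}\Lk$. By Proposition~\ref{P_factor} and the unitriangularity in the corollary immediately following it, $X$ coincides with $\mathrm{span}\{\kks{\mu\cup Q'}\mid\mu\in\Pk\}$; by~(\ref{eq:Pieri}) and Corollary~\ref{weaks_rect_P}, $X$ is an ideal of $\Lk$, and $\Theta$ carries $X$ into $\kks{Q'\cup R_t}\Lk\subseteq X$. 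The crucial point is that $\Theta|_X$ commutes with multiplication by each $h_r$: for any $\mu\in\Pk$ the partition $\mu\cup Q'$ has a part equal to $t$ (because $Q'$ contains $R_t$), so Corollary~\ref{weaks_rect_P} and Lemma~\ref{p=p} give, exactly as in the computation in the proof of Proposition~\ref{Rt_twice} but with $\mu\cup(t)$ replaced by $\mu\cup Q'$, that $h_r\cdot\kks{(\mu\cup Q')\cup R_t}=\Theta(h_r\cdot\kks{\mu\cup Q'})$. Since $h_1,\dots,h_k$ generate $\Lk$, this forces $\Theta|_X$ to be a $\Lk$-module homomorphism; as $X$ is a free $\Lk$-module of rank one (the ring $\Lk$ being an integral domain and $\kks{Q'}\ne 0$), $\Theta|_X$ is multiplication by the scalar $\Theta(\kks{Q'})/\kks{Q'}=\kks{Q'\cup R_t}/\kks{Q'}$, which lies in $\Lk$ by Proposition~\ref{P_factor}. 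Evaluating this at $x=\kks{Q'\cup\la}\in X$ yields the displayed identity.

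I do not anticipate a serious obstacle here: all the substantive content is already contained in Lemma~\ref{p=p} and the proof of Proposition~\ref{Rt_twice}, and the argument is mostly a matter of observing that the latter uses the base $R_t$ only through the fact that $\mu\cup R_t$ has a part equal to $t$. The one point to keep in mind — and the reason the induction peels off a copy of $R_{t_j}$ only when $a_j\ge 2$, never all copies of a rectangle — is precisely that Lemma~\ref{p=p} requires the base partition to have a part equal to $t$; this is consistent with the fact that $\kks{R_t\cup R_t}$ is not $\kks{R_t}^{2}$. A minor point worth spelling out in the write-up is the identity $X=\mathrm{span}\{\kks{\mu\cup Q'}\mid\mu\in\Pk\}=\kks{Q'}\Lk$, which holds because, by the corollary after Proposition~\ref{P_factor}, the transition matrix between $\{\kks{\mu}\}_{\mu\in\Pk}$ and $\{\kks{Q'\cup\mu}/\kks{Q'}\}_{\mu\in\Pk}$ is unitriangular with respect to the order $\leq$ used in the proof of Proposition~\ref{P_factor}.
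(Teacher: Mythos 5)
Your proof is correct and follows essentially the same route as the paper: induction on $\sum_i(a_i-1)$, peeling off one surplus copy of a rectangle via a statement of the type of Proposition~\ref{Rt_twice}. The only difference is that you re-run the $\Theta$-module argument to establish your intermediate identity $\kks{Q'\cup R_t\cup\la}=\kks{Q'\cup\la}\,\kks{Q'\cup R_t}/\kks{Q'}$, whereas the paper gets the same cancellation for free by applying Proposition~\ref{Rt_twice} as stated twice (once with $\la$ replaced by $P'\cup\la$, once with $\la$ replaced by $P'$) and dividing; your extra work is sound but unnecessary.
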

\begin{proof}
	Induction on $\sum_i (a_i-1)$.
	If $\sum_i (a_i-1)=0$ then it is obvious since $P=Q$.
	
	Otherwise, we can assume $a_1>1$ without loss of generality.
	Write $P=R_{t_1}\cup R_{t_1} \cup P'$.
	By Proposition \ref{Rt_twice} we have
	\[
		\frac{\kks{P'\cup\la\cup R_{t_1}\cup R_{t_1}}}{\kks{P'\cup\la\cup R_{t_1}}}
		= \frac{\kks{R_{t_1}\cup R_{t_1}}}{\kks{R_{t_1}}}
		= \frac{\kks{P'\cup R_{t_1}\cup R_{t_1}}}{\kks{P'\cup R_{t_1}}},
	\]
	thus we conclude
	\[
		\frac{\kks{P'\cup\la\cup R_{t_1}\cup R_{t_1}}}{\kks{P'\cup R_{t_1}\cup R_{t_1}}}
		= \frac{\kks{P'\cup\la\cup R_{t_1}}}{\kks{P'\cup R_{t_1}}}
		= \frac{\kks{Q\cup\la}}{\kks{Q}}.
	\]
	Here we used induction hypothesis for the second equality.
\end{proof}

\section{A factorization of $\kks{\RRma\cup(r)}$}\label{sec:one_row}

In this section we will give an explicit formula for $\kks{\RRma\cup\la}\big/\kks{\RRma}$ when $\la=(r)$.

Roughly speaking, $K$-$k$-Schur functions can be calculated by ``solving'' the system of Pieri rule formulas (\ref{eq:Pieri}).
To solve such a system, it is important to understand concretely what weak strips $\core(\nu)/\core(\mu)$ are.

If $\mu$ is
a union of $k$-rectangles $P=\RRma$ the situation is simple: 
if $\core(\nu)/\core(P)$ is a weak strip then $\nu$ has the form $P\cup(s)$ for some $s$,
as we will see in the proof of the following proposition.
Thus the Pieri rule also has a simple explicit expression as follows:

\begin{prop}\label{Rt_prod_hr}
Let $P$ and $\al{u}$ $(u \in \Z_{>0})$ be as in \NP in Section \ref{sec:general}, before Proposition \ref{P_factor}. 
Then, for $1 \le r \le k$, we have
$$\kks{P}\cdot h_r = \sum_{s=0}^{r}(-1)^{r-s}\binom{\al{s+1}}{r-s}\kks{P\cup(s)}.$$
\end{prop}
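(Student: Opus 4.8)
The plan is to apply the Pieri rule (\ref{eq:Pieri}) with $\la$ replaced by $P=\RRma$, and to show that the only partitions $\nu$ for which $\core(\nu)/\core(P)$ is a weak $s$-strip are exactly $\nu=P\cup(s)$, and moreover that for this $\nu$ the invariant $r_{\core(P\cup(s))\,\core(P)}$ equals $\al{s+1}$. First I would fix $P$ and describe $\core(P)$ explicitly: since the $R_{t_v}$ are distinct $k$-rectangles, $\core(R_t)=R_t$ and, using Lemma \ref{core_tila_i} repeatedly (or Corollary \ref{weaks_rect_P} and the fact that $\core$ on a union of rectangles is a ``staircase of rectangles''), I would record the shape of $\core(P)$ and identify its removable corners. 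The key combinatorial observation is that $\core(P)$ has exactly one removable corner per distinct rectangle width appearing, located at the end of the top row of each block $R_{t_v}$, and these corners have pairwise distinct residues.

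Next I would establish the classification of weak strips out of $\core(P)$. By Corollary \ref{weaks_rect_P}, any $\nu$ with $\core(\nu)/\core(P)$ a weak strip has the form $\nu=\mu\cup P$ where $\core(\mu)/\core(\emptyset)=\core(\mu)$ is a weak strip, i.e.\ $\mu=(s)$ for some $0\le s\le k$ (using that the only weak strips out of the empty core are single rows $(s)$, as already noted for $\kks{(r)}=h_r$). So the sum in (\ref{eq:Pieri}) ranges precisely over $\nu=P\cup(s)$, $0\le s\le r$, and it remains to compute the binomial coefficient $\binom{r_{\core(P\cup(s))\,\core(P)}}{r-s}$. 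Here I would show $r_{\core(P\cup(s))\,\core(P)}=\al{s+1}$: a $\core(P)$-removable corner (coming from a block $R_{t_v}$) is \emph{not} blocked by $\core(P\cup(s))$ precisely when adding the row $(s)$ does not fill the cell directly above that corner, which happens exactly when $s\le t_v$, i.e.\ $t_v\ge s+1$ — wait, one must be careful: the relevant condition is whether the new cells of $\core(P\cup(s))/\core(P)$ sit above the corner. I would work this out via Lemma \ref{core_tila_i} applied to $\core(P\cup(s))$ versus $\core(P)$, tracking which rows get shifted, and conclude that the number of non-blocked removable corners with distinct residues is $\#\{v: t_v\ge s+1\}=\al{s+1}$.

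Assembling these, (\ref{eq:Pieri}) gives
\[
\kks{P}\cdot h_r=\sum_{s=0}^{r}(-1)^{r-s}\binom{\al{s+1}}{r-s}\kks{P\cup(s)},
\]
which is the claim. The main obstacle I anticipate is the bookkeeping in the second step: verifying cleanly that $r_{\core(P\cup(s))\,\core(P)}=\al{s+1}$, since this requires understanding exactly how $\core(P\cup(s))$ differs from $\core(P)$ row by row, and checking both that each unblocked removable corner of $\core(P)$ genuinely survives (with the right residue count) and that the blocked ones are precisely those from blocks $R_{t_v}$ with $t_v<s+1$. The distinctness of residues among these corners — needed so that $r_{\cdot\cdot}$ counts them without collision — should follow from the residues of the corner of block $R_{t_v}$ being determined by $t_v$ and the cumulative heights, but this is the delicate point to pin down. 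Everything else (the classification of weak strips, the reduction to $\nu=P\cup(s)$) is routine given Corollary \ref{weaks_rect_P} and Lemma \ref{core_tila_i}.
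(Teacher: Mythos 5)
Your skeleton is the same as the paper's: apply the Pieri rule (\ref{eq:Pieri}) to $\la=P$, show the weak $s$-strips over $\core(P)$ are exactly $\core(P\cup(s))/\core(P)$, and compute $r_{\core(P\cup(s))\core(P)}=\al{s+1}$. Where you genuinely diverge is the middle step: the paper proves the classification from scratch, by an induction on $s$ using the cyclically decreasing characterization (Definition-Proposition \ref{weakstrip}(4)) to show that any weak $s$-strip over $\core(P)$ must be $s_{i+s-1}\cdots s_i(\core(P))$; you instead invoke Corollary \ref{weaks_rect_P} with $\la=\emptyset$ to reduce to weak strips over the empty core. Your reduction is valid and not circular (the Corollary lives in the preliminaries), and it is shorter; what the paper's longer argument buys is an explicit row-by-row description of $\core(P\cup(s))$ (Figure \ref{fig:c_P_s}), which is exactly what is needed for the blocking computation and is reused later in Lemma \ref{r_henkei}. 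Since you still have to reconstruct that shape via Lemma \ref{core_tila_i} before you can decide which removable corners are blocked, the total work is comparable.

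Two inaccuracies in your sketch are worth fixing. First, $\core(P)$ does \emph{not} have one removable corner per distinct width when some $a_j>1$: each of the $a_j$ stacked copies of $R_{t_j}$ contributes its own removable corner, so there are $\sum_j a_j$ of them. This is harmless only because all $a_j$ corners attached to width $t_j$ share the residue $i+t_j$ and are blocked or unblocked simultaneously by $\core(P\cup(s))$ --- a point you should verify explicitly, since $r_{\nu\mu}$ counts distinct residues of nonblocked corners. Second, your parenthetical ``$s\le t_v$, i.e.\ $t_v\ge s+1$'' is an off-by-one (the correct nonblocking condition is $s<t_v$, i.e.\ $s+1\le t_v$); you flag the need for care and land on the right count $\al{s+1}$, but this is precisely the computation you defer as ``the delicate point,'' and it is the heart of the proof, so the attempt is a correct plan rather than a complete argument until that verification is carried out.
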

\begin{proof}
We have
$\core(P) = \underbrace{R_{t_m} \oplus \dots \oplus R_{t_m}}_{a_m} \oplus \cdots \oplus \underbrace{R_{t_1} \oplus \dots \oplus R_{t_1}}_{a_1}$
and
all addable corners of $\core(P)$ has the same residue, say $i$.
Moreover, $\core(P)$ has a total of $\sum_j a_j$ removable corners,
$a_j$ of which are derived from
the removable corner of $R_{t_j}$ and
having the residue $i+t_j$ for each $j$.

Next we claim that if $\gamma/\core(P)$ is a weak $s$-strip
then $\gamma = s_{i+s-1}\cdots s_{i+1}s_{i}(\core(P))$.

\vspace{2mm}
{\it Proof of the claim.}
We prove it by induction on $s$.
If $s=1$, it is obvious because all addable corners of $\core(P)$ have
the same residue $i$.

Let $s>1$ and $\gamma/\core(P)$ be a weak $s$-strip.
Then we can write $\gamma = s_{j_s}\cdots s_{j_2}s_{j_1}(\core(P))$, 
where $(j_s,\cdots,j_1)$ is cyclically decreasing (see Definition-Proposition \ref{weakstrip}(4)).

Since $s_{j_{s-1}}\cdots s_{j_2}s_{j_1}(\core(P))/\core(P)$ is a weak
$(s-1)$-strip, we have $(j_{s-1},\cdots,j_1)=(i+s-2,\cdots,i+1,i)$ by the induction hypothesis.
Since $(j_s,i+s-2,\cdots,i+1,i)$ is cyclycally decreasing,
we have $j_s \notin \{i-1,i,i+1,\cdots,i+s-2\}$.

If $j_s \neq i+s-1$, then $s_{j_s}$ commutes with $s_i,s_{i+1},\cdots,s_{i+s-2}$ and
\begin{align*}
\gamma &= s_{j_s}s_{i+s-2}\cdots s_{i+1}s_{i}(\core(P)) \\
       &= s_{i+s-2}\cdots s_{i+1}s_{i}s_{j_s}(\core(P)). 
\end{align*}
However, $|s_{j_s}(\core(P))|_k \le |\core(P)|_k$ because $\core(P)$ doesn't have an addable corner of residue $j_s$.
Hence $|\gamma|_k \le |\core(P)|_k+s-1$, violating the assumption that $\gamma/\core(P)$ is a weak $s$-strip. 

Hence we have $j_s = i+s-1$, completing the proof of the claim.

\vspace{2mm}
Since $s_{i+s-1}\cdots s_{i+1}s_{i}(\core(P))$ has the form below on the right,
we can see that the corresponding $k$-bounded partition has the form $P\cup(s)$.
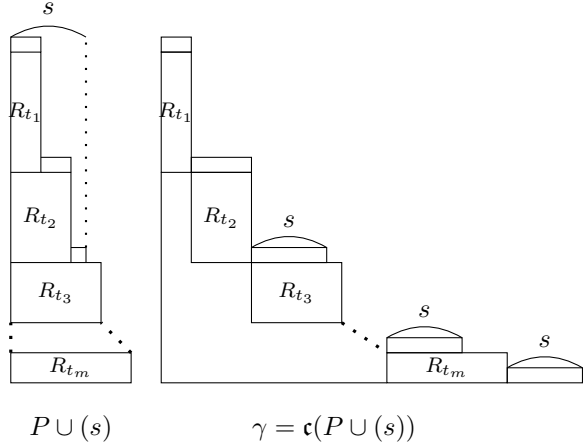
\begin{figure}[htbp]
\[
	\begin{tikzpicture}[scale=0.20]
\draw (0,0) -- (8,0) -- (8,2) -- (0,2) --cycle;

\draw (0,4) -- (6,4) -- (6,8) -- (4,8) -- (4,14) -- (2,14) -- (2,22) -- (0,22) -- cycle;
\draw (0,8) -- (4,8);
\draw (4,9) -- (5,9) -- (5,8);
\draw (2,15) -- (4,15) -- (4,14);
\draw (0,22) -- (0,23) -- (2,23) -- (2,22);
\draw (0,14) -- (2,14);
\draw [loosely dotted,very thick] (0,2) -- (0,4);
\draw [loosely dotted,very thick] (8,2) -- (6,4);
\draw [loosely dotted,thick] (5,9) -- (5,23);

\node at (1,18) {\footnotesize $R_{t_1}$};
\node at (2,11) {\footnotesize $R_{t_2}$};
\node at (3,6) {\footnotesize $R_{t_3}$};
\node at (4,1) {\footnotesize $R_{t_m}$};

\node[below=0.3] at (4,0) {$P\cup (s)$};

\draw (0,23) to [out=30,in=150] node[above=1pt]{$s$} (5,23);

\draw (10,14) rectangle (12,22);
\draw (10,22) rectangle (12,23);
\node at (11,18) {\footnotesize $R_{t_1}$};
\draw (12,8) rectangle (16,14);
\draw (12,14) rectangle (16,15);
\node at (14,11) {\footnotesize $R_{t_2}$};
\draw (16,4) rectangle (22,8);
\draw (16,8) rectangle (21,9);
\node at (19,6) {\footnotesize $R_{t_3}$};
\draw (25,0) rectangle (33,2);
\node at (29,1) {\footnotesize $R_{t_m}$};

\draw [loosely dotted,very thick] (22,4) -- (25,2);
\node [below=0.3] at (21.5,0) {$\ga=\core(P\cup(s))$};

\draw (16,9) to [out=30,in=150] node[above]{$s$} (21,9);
\draw (25,2) rectangle (30,3);
\draw (25,3) to [out=30,in=150] node[above]{$s$} (30,3);
\draw (33,0) rectangle (38,1);
\draw (33,1) to [out=30,in=150] node[above]{$s$} (38,1);

\draw (10,14) |- (25,0);

\end{tikzpicture}
\]
\caption{the shapes of $P\cup(s)$ and $\core(P\cup(s))$. In this figure $a_i=1$ for all $i$.}
\label{fig:c_P_s}
\end{figure}

Now we get back to the proof of the proposition.
Let $\gamma = s_{i+s-1}\cdots s_{i+1}s_{i}(\core(P))$.
Then the removable corner of $\core(P)$ corresponding to the removable corner of $R_{t_a}$ is $\gamma$-blocked if and only if $s\ge t_a$.
Then the number of residues of $\gamma$-nonblocked removable corners of $\core(P)$ is
exactly $\al{s+1}$.
\end{proof}

The above proposition gives an expression for $\kks{P} h_r$ as a linear combination of $\{\kks{P\cup(s)}\}_s$.
To solve this linear equation, we need the following lemma of binomial coefficients.

\begin{lemm}\label{binom_inv_matrix}
Let $l$ be a positive integer and $\beta_1,\beta_2,\cdots,\beta_{l+1}$ be integers such that
$\beta_i\ge \beta_{i+1} \ge \beta_{i}-1$ for each $i$.

Let $C=\bigg((-1)^{r-s}\displaystyle\binom{\beta_{s+1}}{r-s}\bigg)_{r,s=0}^{l}$.
Then $C^{-1} = \displaystyle\bigg(\binom{\beta_r + r-s-1}{r-s}\bigg)_{r,s=0}^{l}$.

Here $\displaystyle\binom{a}{b}$ is considered to be $0$ if $b<0$.
\end{lemm}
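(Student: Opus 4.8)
The plan is to verify directly that the claimed matrices are mutual inverses, i.e.\ that for all $0\le r,s\le l$,
\[
\sum_{u=0}^{l} (-1)^{r-u}\binom{\beta_{u+1}}{r-u}\binom{\beta_u+u-s-1}{u-s} = \delta_{rs}.
\]
Since $\binom{\cdot}{b}=0$ for $b<0$, the sum is really over $s\le u\le r$, so both matrices are lower-triangular (in the sense that the $(r,s)$ entry vanishes unless $s\le r$); in particular the product is lower-triangular, and on the diagonal $r=s$ the only surviving term is $u=r$, giving $\binom{\beta_{r+1}}{0}\binom{\beta_r-1}{0}=1$. So the real content is the vanishing when $r>s$.

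First I would reduce to the case $s=0$. Fixing $s$ and setting $v=u-s$, $w=r-s$, the inner sum becomes $\sum_{v=0}^{w}(-1)^{w-v}\binom{\beta_{v+s+1}}{w-v}\binom{\beta_{v+s}+v-1}{v}$, which has exactly the same shape with the shifted sequence $(\beta_{s+1},\beta_{s+2},\dots)$ in place of $(\beta_1,\beta_2,\dots)$ — and this shifted sequence still satisfies the hypothesis $\beta_i\ge\beta_{i+1}\ge\beta_i-1$. So it suffices to prove: if $\beta_1\ge\beta_2\ge\cdots$ with consecutive differences $0$ or $1$, then for every $w\ge 1$,
\[
\sum_{v=0}^{w}(-1)^{w-v}\binom{\beta_{v+1}}{w-v}\binom{\beta_v+v-1}{v}=0,
\]
with the convention $\binom{\beta_0-1}{0}=1$ so that the $v=0$ term is $(-1)^w\binom{\beta_1}{w}$.

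For the core identity I would use the hypothesis to write $\beta_{v+1}=\beta_1-\varepsilon_1-\cdots-\varepsilon_v$ with each $\varepsilon_j\in\{0,1\}$, or, more usefully, $\beta_v = \beta_1 - c_v$ where $c_1=0\le c_2\le\cdots$ and $c_{v+1}-c_v\in\{0,1\}$. The key algebraic fact I expect to invoke is the ``negation'' identity $\binom{\beta_v+v-1}{v}=(-1)^v\binom{-\beta_v}{v}=(-1)^v\binom{c_v-\beta_1}{v}$, which converts the second factor into an upper-negative binomial. Then the sum becomes a single-variable convolution that should telescope or collapse via the Vandermonde/Chu convolution $\sum_v \binom{x}{w-v}\binom{y}{v}=\binom{x+y}{w}$ after the signs are absorbed. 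I anticipate that the cleanest route is generating functions: encode the $r$-th row of $C$ as the coefficient extraction from $(1+x)^{\beta_{r+1}}$-type expressions... more precisely, recognize $\sum_r (-1)^{r-s}\binom{\beta_{s+1}}{r-s}x^r$ and $\sum_r \binom{\beta_r+r-s-1}{r-s}x^r$ as, respectively, a truncation/shift of $x^s(1-x)^{\beta_{s+1}}$ and of $x^s(1-x)^{-\beta_s}$-style series, so that the matrix product corresponds to the formal identity $(1-x)^{\beta}\cdot(1-x)^{-\beta}=1$ — but one must be careful because the exponents $\beta_{s+1}$ versus $\beta_s$ differ, which is exactly where the hypothesis $\beta_s-\beta_{s+1}\in\{0,1\}$ is forced to enter.

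The main obstacle will be precisely this mismatch of indices: the exponent appearing in the $C$-entry at column $s$ is $\beta_{s+1}$ while the exponent in the $C^{-1}$-entry at row $s$ is $\beta_s$, so the two generating functions do not literally multiply to $1$ and the telescoping is not immediate. I expect to handle it by an induction on $l$ (peeling off the last row and column) combined with a short case split on whether $\beta_{l+1}=\beta_l$ or $\beta_{l+1}=\beta_l-1$; in the first case the bottom row of $C$ is essentially a shift of the previous one and the identity $\binom{n}{m}=\binom{n-1}{m}+\binom{n-1}{m-1}$ lets one reduce to smaller size, and in the second case a parallel but slightly different bookkeeping applies. Alternatively — and this may be the slickest — one proves the stronger statement that $C$ is the product of elementary bidiagonal matrices (one for each step $\beta_v\to\beta_{v+1}$), each of which is visibly invertible with the expected inverse, and multiplies these up; I would try that factorization first and fall back on the induction if the bookkeeping gets unwieldy.
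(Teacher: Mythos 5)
Your plan stalls exactly where you predict it will, and the proposal never gets past that obstacle: you set out to verify $CD=I$, i.e.
\[
\sum_{u}(-1)^{r-u}\binom{\beta_{u+1}}{r-u}\binom{\beta_u+u-s-1}{u-s}=\delta_{rs},
\]
a convolution in which the summation variable $u$ appears in the parameter of \emph{both} binomial factors ($\beta_{u+1}$ in one, $\beta_u$ in the other), so that the whole range $\beta_s,\dots,\beta_{r+1}$ enters and no single Vandermonde/Chu convolution applies. You correctly identify this mismatch as the main obstacle, but you resolve it only by gesturing at two unexecuted strategies (an induction on $l$ with a case split on $\beta_{l+1}\in\{\beta_l,\beta_l-1\}$, or a factorization of $C$ into bidiagonal elementary matrices). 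Neither is carried out, so the core vanishing identity for $r>s$ is asserted rather than proved. Your triangularity and diagonal observations are correct, and the reduction to $s=0$ is sound, but the heart of the lemma is missing.

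The missing idea is simply to compute the product in the other order, which for square matrices is equivalent. The paper verifies $DC=I$:
\[
(DC)_{pq}=\sum_{j=0}^{p-q}(-1)^{j}\binom{\beta_{q+1}}{j}\binom{\beta_p+(p-q-j)-1}{p-q-j},
\]
and here only the two \emph{fixed} parameters $\beta_{q+1}$ and $\beta_p$ occur, independent of the summation variable $j$. A single application of the generating-function identity
$\sum_{i=0}^{c}(-1)^i\binom{a}{i}\binom{b-1+c-i}{c-i}=(-1)^c\binom{a-b}{c}$
(Lemma \ref{binom_lem1}, i.e.\ extracting the coefficient of $X^c$ from $(1+X)^{a}(1+X)^{-b}$) gives $(DC)_{pq}=(-1)^{p-q}\binom{\beta_{q+1}-\beta_p}{p-q}$. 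The hypothesis $\beta_i\ge\beta_{i+1}\ge\beta_i-1$ then enters exactly once: for $p>q$ it forces $0\le\beta_{q+1}-\beta_p\le p-q-1$, so this binomial coefficient vanishes, while for $p=q$ it equals $1$. This is precisely the clean telescoping you were hoping for, available only on the $DC$ side.
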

\begin{proof}
Put $D=\displaystyle\bigg(\binom{\beta_r + r-s-1}{r-s}\bigg)_{r,s=0}^{l}$.
The $(p,q)$ element of the matrix $DC$ is
\begin{align*}
(DC)_{pq} &= \sum_{i=0}^{l} \binom{\beta_p + p-i-1}{p-i} \cdot (-1)^{i-q}\displaystyle\binom{\beta_{q+1}}{i-q}\\
          &= \sum_{i=q}^{p} \binom{\beta_p + p-i-1}{p-i} \cdot (-1)^{i-q}\displaystyle\binom{\beta_{q+1}}{i-q}\\
          &= \sum_{j=0}^{p-q} \binom{\beta_p + p-q-j-1}{p-q-j} \cdot (-1)^{j}\displaystyle\binom{\beta_{q+1}}{j},
\end{align*}
which is 0 unless $p\ge q$.

Let us consider the case $p\ge q$.

By applying the next lemma for $a=\beta_{q+1},\ b=\beta_{p},\ c=p-q\ge 0$, we have
\begin{align*}
(DC)_{pq} &= (-1)^{p-q} \binom{\beta_{q+1}-\beta_{p}}{p-q} \\
          &= \begin{cases}
	  0 & \ ({\rm if}\ p>q), \\
	  1 & \ ({\rm if}\ p=q),
	  \end{cases}
\end{align*}
where the last equality follows from $\beta_{q+1}-\beta_{p} \in \{0,1,\cdots,p-q-1\}$ (if $q+1\le p$).
\end{proof}

\begin{lemm}\label{binom_lem1}
For integers $a$,$b$ and a nonnegative integer $c$,
\[
	\sum_{i=0}^{c} (-1)^i \binom{a}{i} \binom{b-1+c-i}{c-i} = (-1)^c \binom{a-b}{c}.
\]
\end{lemm}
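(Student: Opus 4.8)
The plan is to prove the identity by a short formal power series computation in $\Z[[x]]$, reading each side as the coefficient of $x^c$ in an appropriate product.

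First I would record the two generating function expansions that are needed. For any integer $n$ one has $(1+x)^n=\sum_{j\ge 0}\binom{n}{j}x^j$ in $\Z[[x]]$ (for $n<0$ this is the ordinary expansion of $1/(1+x)^{-n}$), together with the negation rule $\binom{-n}{j}=(-1)^j\binom{n+j-1}{j}$, valid for every integer $n$ and every $j\ge 0$. Substituting $-x$ for $x$ these give
\[
(1-x)^{a}=\sum_{i\ge 0}(-1)^i\binom{a}{i}x^i,
\qquad
(1-x)^{-b}=\sum_{j\ge 0}(-1)^j\binom{-b}{j}x^j=\sum_{j\ge 0}\binom{b-1+j}{j}x^j.
\]

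Then, since $c\ge 0$, the left-hand side of the lemma is exactly the coefficient of $x^c$ in the Cauchy product of these two series, that is, in $(1-x)^a\cdot(1-x)^{-b}=(1-x)^{a-b}$; extracting that coefficient by the same expansion yields $(-1)^c\binom{a-b}{c}$, which is the right-hand side. The only point requiring care is that $a$, $b$ and $a-b$ may be negative, so one must work with generalized binomial coefficients and formal power series rather than with finite sums; once that is set up, the whole argument reduces to the single identity $(1-x)^a(1-x)^{-b}=(1-x)^{a-b}$ in $\Z[[x]]$. For a reader preferring to avoid power series, the same identity can instead be obtained by induction on $a$ using Pascal's rule (both sides being polynomial in $a$ for fixed $b,c$, it suffices to treat $a$ a nonnegative integer, where it is classical), but the generating function proof is shorter and is the one I would write out.
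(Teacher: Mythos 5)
Your proof is correct and follows essentially the same route as the paper: both arguments reduce the sum to the coefficient of $x^c$ in $(1+X)^a(1+X)^{-b}=(1+X)^{a-b}$ (you merely substitute $-x$ to absorb the signs into the series up front, where the paper instead first rewrites $\binom{b-1+c-i}{c-i}$ as $(-1)^{c-i}\binom{-b}{c-i}$ and pulls out a global factor of $(-1)^c$). There is nothing to add.
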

\begin{proof}
	Since $\binom{m}{n}$ is the coefficient of $X^n$ in $(1+X)^m\in\Z[[X]]$ for $m\in\Z$ and $n\in\Z_{\ge 0}$, we have
	\begin{align*}
		(\textrm{LHS}) &= \sum_{i=0}^{c} (-1)^c \binom{a}{i} \binom{-b}{c-i} \\
			  &= (-1)^c \text{(the coefficient of $X^c$ in $(1+X)^a(1+X)^{-b}\in\Z[[X]]$)}\\
			  &= (-1)^c \binom{a-b}{c}.
	\end{align*}
\end{proof}

Now we can deduce the formula showing the goal of this section.

\begin{theo}\label{R_t_r}
If $P, \al{u}$ and $r$ are as in
Proposition \ref{Rt_prod_hr}, then we have
\[\frac{\kks{P\cup(r)}}{\kks{P}} = \sum_{s=0}^{r}\binom{\al{r}+r-s-1}{r-s}h_s.\]

In particular, if $t_m < r$, which means $\al{r}=0$, we have
\[\frac{\kks{P\cup(r)}}{\kks{P}} = h_r = \kks{(r)}\]

On the other hand, when $m=1$,
\[
\frac{\kks{R_t\cup(r)}}{\kks{R_t}} = 
\begin{cases}
h_r & ({\rm if}\ r>t), \\
h_r+h_{r-1}+\cdots+h_0 & ({\rm if}\ r\le t).
\end{cases}
\]
\end{theo}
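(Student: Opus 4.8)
The plan is to obtain the formula by inverting the linear system supplied by Proposition~\ref{Rt_prod_hr}. Recall that for $1\le r\le k$ that proposition reads $\kks{P}\cdot h_r=\sum_{s=0}^{r}(-1)^{r-s}\binom{\al{s+1}}{r-s}\kks{P\cup(s)}$, and that the same identity holds trivially for $r=0$ (both sides equal $\kks{P}$, since $\kks{P\cup(0)}=\kks{P}$). Since moreover $\kks{\emptyset}=h_0=1$ and $\kks{(s)}=h_s$ for $1\le s\le k$, I would collect these equations for $0\le r\le k$ into a single matrix identity $\mathbf{u}=C\mathbf{w}$, where $\mathbf{u}=(\kks{P}h_r)_{r=0}^{k}$, $\mathbf{w}=(\kks{P\cup(s)})_{s=0}^{k}$, and $C=\bigl((-1)^{r-s}\binom{\al{s+1}}{r-s}\bigr)_{r,s=0}^{k}$.

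The crucial point is that the sequence $\beta_i:=\al{i}$ meets the hypothesis of Lemma~\ref{binom_inv_matrix}, namely $\beta_i\ge\beta_{i+1}\ge\beta_i-1$: indeed $\al{i}-\al{i+1}=\#\{v\mid t_v=i\}\in\{0,1\}$ because $t_1,\dots,t_m$ are pairwise distinct. So Lemma~\ref{binom_inv_matrix} (with $l=k$) gives $C^{-1}=\bigl(\binom{\al{r}+r-s-1}{r-s}\bigr)_{r,s=0}^{k}$, and reading off the $r$-th coordinate of $\mathbf{w}=C^{-1}\mathbf{u}$ yields
\[
\kks{P\cup(r)}=\sum_{s=0}^{r}\binom{\al{r}+r-s-1}{r-s}\,\kks{P}\,h_s
\]
(the summation may be cut off at $s=r$, since the binomial coefficient with lower index $r-s$ vanishes for $s>r$). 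Dividing by $\kks{P}$ — legitimate because $\Lk$ is an integral domain and $\kks{P}\neq0$, and incidentally re-establishing the divisibility of Proposition~\ref{P_factor} in this instance — produces the stated identity, using once more $h_0=1$ and $h_s=\kks{(s)}$.

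Finally, the two displayed special cases follow by substituting the value of $\al{r}$. If $t_m<r$ then $\al{r}=0$, and $\binom{r-s-1}{r-s}$ vanishes except at $s=r$, where it equals $\binom{-1}{0}=1$, so the sum collapses to $h_r=\kks{(r)}$. If $m=1$ (so $P=R_t$), then $\al{r}=1$ for $r\le t$ and $\al{r}=0$ for $r>t$; in the former case $\binom{r-s}{r-s}=1$ for every $0\le s\le r$, giving $h_r+h_{r-1}+\cdots+h_0$, and the latter case is the one just treated.

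I do not anticipate a genuine obstacle: both inputs (Proposition~\ref{Rt_prod_hr} and the binomial inversion Lemma~\ref{binom_inv_matrix}) are already in hand, so the only things to verify are that the sequence $\al{1},\al{2},\dots$ satisfies the staircase condition of the lemma — which is precisely where distinctness of the $t_v$ enters — and that the indexing in the matrix inversion is carried out correctly.
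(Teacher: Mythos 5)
Your proposal is correct and is exactly the paper's argument: the paper's entire proof of Theorem \ref{R_t_r} is ``Apply Lemma \ref{binom_inv_matrix} for Proposition \ref{Rt_prod_hr},'' and you have simply spelled out the details (the $r=0$ row, the verification that $\al{i}-\al{i+1}\in\{0,1\}$ from distinctness of the $t_v$, and the evaluation of the two special cases), all of which check out.
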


\begin{proof}
	Apply Lemma \ref{binom_inv_matrix} for Proposition \ref{Rt_prod_hr}.
\end{proof}

\section{A factorization of $g^{(k)}_{\RRma\cup\la}$ with small $\la$ and splitting $\kks{\RRma}$ into $\kks{R_{t_1}^{a_1}}\dots\kks{R_{t_m}^{a_m}}$}\label{sec:dist}
\subsection{Statements}\label{sec:dist_ex}
Our goal in this section is to show the equality
$$\kks{R_{t_1}^{a_1}\cup\cdots\cup R_{t_m}^{a_m}} = \kks{R_{t_1}^{a_1}}\cdots\kks{R_{t_m}^{a_m}}$$
for 
$1\le t_1 < \cdots < t_m \le k$ and $a_i>0$ (see Theorem \ref{dist_krec_factor}).

The essential part is to prove $\kks{R_{t_1}\cup\dots\cup R_{t_{m}}} = \kks{R_{t_1}\cup\dots\cup R_{t_{m-1}}} \kks{R_{t_m}}$,
and the remainging part follows from the results from Section \ref{sec:general} and induction (on $n$).
This is a simple statement, but our proof involves an induction on the shape of partitions, 
thus we have to prove a more general statement
(see the case $t_n < r$ of part (2) of Theorem \ref{P_mu_r}):
Let $P=\bigcup_{i=1}^{m} R_{t_i}^{a_i}$ be as in \NP, Section \ref{sec:general}, before Proposition \ref{P_factor},
and $\la$ as follows:
\begin{itemize}
	\item [\Nla]\label{nota:Nla}
		Let $(\emptyset\neq)\la\in\Pk$ with satisfying $\bla\subset\Rbl$,
		where we write
		$\bla=(\la_1,\la_2,\dots,\la_{l(\la)-1})$ and
		$\bl = l(\bla) = l(\la) - 1$.
		(Here we consider $R_{t}$ to be $\emptyset$ unless $1\le t\le k$)
\end{itemize}
		({\it Note}:
		when $l(\la)=1$, we have $\bl = 0$ and $\bla = \emptyset = \Rbl$ thus $\la$ satisfies \Nla.
		When $l(\la) > k+1$, we have $\bl > k$ and $\bla \neq \emptyset = \Rbl$ thus $\la$ does not satisfy \Nla.
		)
		
Then, 
\begin{equation}\label{eq:dist_goal_small}
\kks{P\cup\la} = \kks{P}\kks{\la} 
\quad\text{when } \la_{l(\la)}>\max_i\{t_i\}.
\end{equation}

\subsection{Proofs}

We will prove a slightly even more general formula than (\ref{eq:dist_goal_small}) (see part (2) of Theorem \ref{P_mu_r}) in the following procedure.
\begin{itemize}
	\item
		\underline{Step (A)}:\\
		First we write $\kks{\la}$ as a linear combination of products of $h_i$'s and $\kks{\mu}$'s with $l(\mu)<l(\la)$:
		putting $\bar\la=(\la_1,\dots,\la_{l(\la)-1})$, we have
		\[
			\kks{\la} = \hspace{-7mm}\sum_{\substack{\mu \text{ s.t.}\\ \bar\la\subset\mu\subset R_{k-l(\bar\la)+1} \\ \mu/\bar\la\text{ :vertical strip}}}\hspace{-7mm} (-1)^{|\mu/\bar\la|} \kks{\mu} \sum_{i\ge 0} 
			\binom{(|\mu/\bar\la|+r_{\mu'\bar\la'})+i-1}{i} h_{\la_{l(\la)}-|\mu/\bar\la|-i}
		\]
		if $\bar\la_1+l(\bar\la)\le k+1$
		(Lemmas \ref{kks_prod_hr} (1), \ref{binom_heikouidou} (1), \ref{g_mu_r_expand} (1) and \ref{A_la_q}).
	\item
		\underline{Step (B)}:\\
		Derive a similar expression for $\kks{P\cup\la}$
		(parts Lemmas \ref{kks_prod_hr} (2)-(3), \ref{binom_heikouidou} (2)-(3), \ref{g_mu_r_expand} (2)-(3), \ref{A_la_q}).
	\item
		\underline{Step (C)}:\\
		Compare (B) with the equality obtained by multiplying the formula in Step (A) by $\kks{P}$,
		noticing $\kks{P}\kks{\mu}=\kks{P\cup\mu}$
		by induction.
\end{itemize}

\vspace{2mm}
Step (A) consists of two substeps:
\begin{itemize}
	\item
		Step (A-1):
		Write down the Pieri rule for $\kks{\mu} h_r$ explicitly.
	\item
		Step (A-2):
		Solve the system of Pieri rule formulas to give expressions for $\kks{\la}$ as a linear combination of $\{\kks{\mu} h_r\}_{\mu,r}$.
\end{itemize}

Obtaining an expression for $\kks{P\cup\la}$ in Step (B) follows from similar steps (B-1) and (B-2).
\begin{itemize}
	\item
		Step (B-1):
		Write down the Pieri rule for $\kks{P\cup\mu} h_r$ explicitly.
	\item
		Step (B-2):
		Solve the system of Pieri rule formulas to give expressions for $\kks{P\cup\la}$ as a linear combination of $\{\kks{P\cup\mu} h_r\}_{\mu,r}$.
\end{itemize}

\subsection{Steps (A-1) and (B-1)}
Toward Step (A-1) and (B-1),
let us begin with describing weak strips $\core(\la)/\core(\mu)$ where $\mu$ is contained in a $k$-rectangle.

\begin{lemm}\label{ws_in_R}
	Assume $\mu \subset \Rl$ and $\mu_l>0$.
	Let $0 \le u \le \mu_l$ be an integer.

	\noindent{$(1)$}
	For $\ka\in\Pk$,
	\begin{align*}
		\text{$\core(\ka)/\core(\mu)$ is a weak $u$-strip} &\Longleftrightarrow
	\begin{cases}
	\text{$\ka/\mu$ is a horizontal $u$-strip}, \\ 
	\ka_1\le k-l+1,
	\end{cases} \\
	&\Longleftrightarrow \ka=\nu\cup(s) , \\
	&\phantom{\Longleftrightarrow} \text{where}
	\begin{cases}
	\nu \subset \Rl, \\
	\text{$\nu/\mu$ is a horizontal strip of size $\le u$}, \\
	s = u-|\nu/\mu|.
	\end{cases}
	\end{align*}
	\[
\begin{tikzpicture}[scale=0.18]
\draw (0,0)--(12,0)--(12,2)--(9,2)--(9,4)--(6,4)--(6,7)--(0,7)--cycle;
\draw[loosely dotted,thick] (0,7)-|(14,0);
\draw[pattern=dots] (0,7) rectangle (2,8);
\draw[pattern=dots] (6,4) rectangle (7,5);
\draw[pattern=dots] (12,0) rectangle (14,1);
\node at (3,3) {$\mu$};
\draw (0,0) to [out=-20,in=-160] node[below]{$k+1-l$} (14,0);
\draw (14,0) to [out=70,in=-70] node[right]{$l$} (14,7);
\end{tikzpicture}
\]

	\noindent{$(2)$}
	For $\ti\ka\in\Pk$, 
	\begin{align*}
	&\text{$\core(\tilde\ka)/\core(P\cup\mu)$ is a weak $u$-strip} \\
	 &\quad\iff \tilde\ka=P\cup\ka, \text{where $\core(\ka)/\core(\mu)$ is a weak $u$-strip} \\
	&\quad\iff \tilde\ka=P\cup\nu\cup(x), \text{where} 
		\begin{cases} 
			\nu\subset\Rl, \\ 
			\text{$\nu/\mu$: horizontal strip}, \\ 
			|\nu/\mu|+x = u.
		\end{cases}
	\end{align*}

\end{lemm}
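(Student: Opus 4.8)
The plan is to deduce part $(2)$ from part $(1)$ and to put essentially all the work into part $(1)$. For part $(2)$, Corollary \ref{weaks_rect_P} already gives that $\core(\ti\ka)/\core(P\cup\mu)$ is a weak strip precisely when $\ti\ka=P\cup\ka$ for some $\ka\in\Pk$ with $\core(\ka)/\core(\mu)$ a weak strip; it then remains only to observe that the size $u$ is the same on both sides, which follows from characterization $(2)$ of Definition-Proposition \ref{weakstrip} (the size of a weak strip is the difference of the two values of $\ksize{\cdot}$) together with $\ksize{\core(P\cup\xi)}=\ksize{\core(\xi)}+|P|$ for $\xi\in\Pk$. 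The second displayed equivalence of $(2)$ is then the second equivalence of $(1)$ with $P$ adjoined everywhere.

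So fix $\mu\subset\Rl$ with $\mu_l>0$ and an integer $0\le u\le\mu_l$, and prove $(1)$. Since $\mu$ lies in a $k$-rectangle it equals its own $(k+1)$-core, so $\core(\mu)=\mu$ and $\kconj{\mu}=\mu'$. Granting the first equivalence of $(1)$, the second is bookkeeping: if $\ka/\mu$ is a horizontal $u$-strip then $l(\ka)\le l+1$ (as $l(\mu)=l$), so put $\nu=(\ka_1,\dots,\ka_l)$ and $s=\ka_{l+1}$; then $\nu\subset\Rl$, $\nu/\mu$ is a horizontal strip, and $|\nu/\mu|+s=|\ka|-|\mu|=u$, whence $s=u-|\nu/\mu|$ and $|\nu/\mu|\le u$. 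Conversely, given such $\nu$ and $s$ one has $\ka_1=\max(\nu_1,s)\le k+1-l$ (since $s\le u\le\mu_l\le k+1-l$) and $\ka_{i+1}\le\mu_i$ for all $i$, so $\ka/\mu$ is a horizontal $u$-strip.

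It therefore remains to show: $\core(\ka)/\core(\mu)$ is a weak $u$-strip if and only if $\ka/\mu$ is a horizontal $u$-strip with $\ka_1\le k+1-l$. One direction of the horizontal-strip-and-size statement is free: by characterization $(3)$ of Definition-Proposition \ref{weakstrip}, if $\core(\ka)/\core(\mu)$ is a weak $u$-strip then $\bdd(\core(\ka))/\bdd(\mu)=\ka/\mu$ is a horizontal strip and $\ksize{\core(\ka)}=|\ka|=|\mu|+u$. So the content is, on one hand, the necessity of $\ka_1\le k+1-l$, and on the other hand the sufficiency of the three stated conditions, for which the only part of characterization $(3)$ not already in hand is that $\bdd(\core(\ka)')/\bdd(\mu')=\kconj{\ka}/\mu'$ be a vertical strip. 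For sufficiency I would realize the weak $u$-strip as an explicit saturated chain in the weak order on $\Pk$ (characterization $(1)$ of Definition-Proposition \ref{weakstrip}): first add the horizontal strip $\nu/\mu$ cell by cell, all intermediate partitions lying in $\Rl$ so that the $k$-conjugate condition reduces to ordinary conjugation; then adjoin the bottom row $(s)$ cell by cell, for which one checks the cover conditions using $\ka_1\le k+1-l$ and a direct application of Lemma \ref{core_j}. For necessity, suppose $\ka_1\ge k+2-l$; then a cell of $\ka/\mu$ lies in column $\ge k+2-l$ of the first row, so $\ka$ has a cell of hook length $\ge k+1$ and is not a $(k+1)$-core, and tracking via Lemma \ref{core_j} how $\core$ straightens the first row one checks that $\core(\ka)/\core(\mu)$ is not a weak $u$-strip (the number of distinct residues it involves exceeds $u$).

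The main obstacle is exactly the interplay between the width bound $\ka_1\le k+1-l$ and the core-straightening procedure: showing both that overflowing this width forces too many residues to fit a weak $u$-strip with $u\le\mu_l$, and that within this width the core can be straightened compatibly with building the strip cell by cell. Everything else is a direct citation (Corollary \ref{weaks_rect_P}, Definition-Proposition \ref{weakstrip}, Lemma \ref{core_j}) or elementary manipulation of the defining inequalities of horizontal and vertical strips.
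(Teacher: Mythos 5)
Your proposal is correct in outline, but it reaches the two nontrivial points of part (1) by routes genuinely different from the paper's, and both are left as sketches. For the ``only if'' direction (necessity of $\ka_1\le k+1-l$) the paper argues through the $k$-conjugate: assuming $\ka_1>k+1-l$ it computes via hook lengths that $\kconj{\ka}_j=\ka'_j-1$ for $1\le j\le\mu_l$, feeds this into the vertical-strip condition of Definition-Proposition \ref{weakstrip}(3) to force $\ka_{l+1}\ge\mu_l$, and contradicts $|\ka/\mu|=u\le\mu_l$. You instead count residues; to complete that sketch one writes, via Lemma \ref{core_j}, $\core(\ka)_1=\ka_1+\core(\ka)_{k+2-\ka_1}\ge\ka_1+\mu_l$, so the first row of $\core(\ka)/\core(\mu)$ contains at least $\ka_1+\mu_l-\mu_1\ge\mu_l+1>u$ consecutive cells and hence more than $u$ distinct residues (since $u\le k$), contradicting characterization (2) of Definition-Proposition \ref{weakstrip}. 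This is arguably shorter once the inequality is written down, and the implication you need (weak $u$-strip implies exactly $u$ residues) is the direction the paper does establish by citation, so the appeal is legitimate; your intermediate remark that $\ka$ ``is not a $(k+1)$-core'' is immaterial, since $\ka$ is a $k$-bounded partition and only $\core(\ka)$ enters. For the ``if'' direction the paper simply observes that $\core(\ka)$ equals $\ka$ or $(\ka_1+\ka_{l+1},\ka_2,\dots)$ and checks the vertical-strip condition of characterization (3) in one shot, whereas you build a saturated weak-order chain for characterization (1) cell by cell; that route works but obliges you to compute the $k$-conjugates of every intermediate shape $\nu\cup(j)$, which is exactly the verification the paper's use of (3) collapses into a single line. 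Your treatment of part (2) and of the second equivalence in (1) coincides with the paper's.
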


\begin{proof}
	\noindent{(1):}
	The second equivalence is obvious.
	
	The ``if'' part of the first equivalence is easy:
	since $\ka_1\le k+1-l$ and $l(\ka)\le l+1$, 
	we have $\core(\ka)=\ka$ or $\core(\ka)=(\ka_1+\ka_{l+1},\ka_2,\ldots)$ and hence $\kconj{\ka}/\kconj{\mu}$ is a vertical strip.

	Hence it suffices to prove the ``only if'' part of the first equivalence:
	let
	$\mu\subset R_{k+1-l}$, $\mu_l>0$, and $\core(\ka)/\core(\mu)$ be a weak strip of size $\le \mu_l$, 
	and we shall prove $\ka_1 \le k+1-l$.

\[
\begin{tikzpicture}[scale=0.18]
\draw (0,0)--(11,0)--(11,2)--(8,2)--(8,4)--(5,4)--(5,7)--(0,7)--cycle;
\draw[loosely dotted,thick] (0,7)-|(13,0);
\draw[pattern=dots] (0,7) rectangle (3,8);
\draw[pattern=dots] (5,4) rectangle (7,5);
\draw[pattern=dots] (8,2) rectangle (11,3);
\draw[pattern=dots] (11,0) rectangle (16,1);
\node at (3,2.5) {\footnotesize$\mu$};
\draw (0,0) to [out=-10,in=-170] node[below]{\footnotesize$k+1-l$} (13,0);
\draw (13,0) to [out=-30,in=-150] node[below]{\footnotesize$u$} (16,0);
\draw (16,0) to [out=-20,in=-160] node[below]{\footnotesize$\ka_{l+1-u}$} (23,0);

\draw[loosely dotted,thick] (0,4)--(5,4);
\draw[loosely dotted,thick] (0,5)--(5,5);
\draw[loosely dotted,thick] (7,0)--(7,4);
\draw (16,0) rectangle (23,1);
\draw [pattern=north west lines] (0,0) rectangle (7,1);
\node [left] at (0,4.5) {\footnotesize$l+1-u$};
\node [left] at (0,7.5) {\footnotesize$l+1$};

\draw [<-] (3,0.5) to [out=-135,in=90] (1,-4) 
	node[below]{\footnotesize cells with hook lengths $>k$};

\end{tikzpicture}
\]

	Assume, on the contrary, that $\ka_1 > k+1-l$.
	Write $\bar\ka=(\ka_2,\ka_3,\dots)\subset \Rl$.
	Then by Lemma \ref{core_j} we have
	\[
		\core(\ka)_i = \core(\bar{\ka})_{i-1} = \bar{\ka}_{i-1}=\ka_i\ \ \ \text{(for $i>1$), and}
	\]
	\[
		\core(\ka)_1 = \ka_1 + \underbrace{\core(\ka)_{\scriptsize{\underbrace{1+k+1-\ka_1}_{< l+1}}}}_{\ge \ka_l \ge \mu_l} \ge \ka_1+\mu_l > k+1-l+\mu_l.
	\]

	Hence, the hook lengths of $(1,1),\cdots, (1,\mu_l)$ in $\core(\ka)$ are all greater than $k$ because 
	\begin{align*}
	h_{(1,j)}(\core(\ka)) &= \core(\ka)_1 + \core(\ka)'_j-1-j+1 \\
	&> k+1-l+\mu_l + \underbrace{\core(\ka)'_j}_{\ge \ka'_j\ge \mu'_j \ge l} \underbrace{- j}_{\ge -\mu_l} \\
	       &\ge k+1
	\end{align*}
	for $1\le j \le \mu_l$.
	On the other hand, those of $(2,1),\cdots, (2,\mu_l)$ in $\core(\ka)$ are less than or equal to $k$ because 
	$\bar\ka \subset \mu \subset \Rl$.

	Hence 
	\[
		\ka^{\omega_k}_j = \core(\ka)'_j - 1 = \ka'_j - 1
	\]
	for $1\le j \le \mu_l$.

	Since $\core(\ka)/\core(\mu)$ is a weak strip, $\kconj{\ka}/\kconj{\mu}$ is a vertical strip.
	Hence 
	\[ \ka'_j - 1 = \kconj{\ka}_j \ge \kconj{\mu}_j = \mu'_j = l\]
	for $1 \le j \le \mu_l$, which implies $\ka_{l+1} \ge \mu_l$.
	Then we have 
	\[ |\ka/\mu| \ge (\ka_{l+1}-\underbrace{\mu_{l+1}}_{=0}) + (\underbrace{\ka_1-\mu_1}_{>0}) > \mu_l \]
	since $\ka_1>k+1-l\ge \mu_1$.
	This is a contradiction.
	
	\noindent{(2):}
	The first equivalence follows from Corollary \ref{weaks_rect_P}.
	The second equivalence follows from (1).
\end{proof}

Next let us explicitly describe the weak Pieri rule (\ref{eq:Pieri}),
after we prepare a notation for convenience.

\begin{defi}\label{def:T}
	Let $P$ and $\al{u}$ $(u\in\Z_{>0})$ be as in \NP.
	For $\nu \subset R_{k+1-l(\nu)}$, $0\le u \le \nu_{l(\nu)}$, $p \in \mathbb{Z}$,
	we set
	\begin{align*}
		T_{\nu,u,p} &:= \sum_{s=0}^{u} (-1)^s \binom{p}{s} \kks{\nu\cup (u-s)}, \\
		T'_{P,\nu,u,p} &:= \sum_{s=0}^{u} (-1)^s \binom{p+\al{u+1-s}}{s} \kks{P\cup\nu\cup (u-s)}.
	\end{align*}
\end{defi}
\begin{lemm}\label{kks_prod_hr} 
	Let $P$ and $\al{u}$ $(u\in\Z_{>0})$ be as in \NP.
	Assume $\mu \subset \Rl$, $\mu_l>0$, $\mu_l\ge r \ge 0$.
	Then we have

$(1)$
\vspace{-3mm}
\begin{align*}
\kks{\mu} h_r &= \sum_{\substack{\nu\subset R_{k+1-l} \\ \nu/\mu\text{\rm:h.s.}}} 
		\sum_{s=0}^{r-|\nu/\mu|} (-1)^s \binom{r_{\nu\mu}}{s} \kks{\nu\cup (r-|\nu/\mu|-s)} \\
		\Bigg(&= \sum_{\substack{\nu\subset R_{k+1-l} \\ \nu/\mu\text{\rm:h.s.}}} T_{\nu,r-|\nu/\mu|,r_{\nu,\mu}}\Bigg).
\end{align*}

$(2)$
If $\maxti<\mu_l$,
\begin{align*}
\kks{P\cup\mu} h_r &= \sum_{\substack{\nu\subset R_{k+1-l} \\ \nu/\mu\text{\rm:h.s.}}} 
	\sum_{s=0}^{r-|\nu/\mu|} (-1)^s \binom{r_{\nu\mu}+\al{r-|\nu/\mu|+1-s}}{s} \kks{P\cup\nu\cup (r-|\nu/\mu|-s)} \\
	\Bigg(&= \sum_{\substack{\nu\subset R_{k+1-l} \\ \nu/\mu\text{\rm:h.s.}}} T'_{P,\nu,r-|\nu/\mu|,r_{\nu,\mu}}\Bigg).
\end{align*}

$(3)$
If $\maxti=\mu_l$,
\begin{align*}
\kks{P\cup\mu} h_r &= \sum_{\substack{\nu\subset R_{k+1-l} \\ \nu/\mu\text{\rm:h.s.}}} 
\sum_{s=0}^{r-|\nu/\mu|} (-1)^s \binom{r_{\nu\mu}+\al{r-|\nu/\mu|+1-s}-1}{s} \kks{P\cup\nu\cup (r-|\nu/\mu|-s)} \\
		\Bigg(&= \sum_{\substack{\nu\subset R_{k+1-l} \\ \nu/\mu\text{\rm:h.s.}}} T'_{P,\nu,r-|\nu/\mu|,r_{\nu,\mu}-1}\Bigg).
\end{align*}
\end{lemm}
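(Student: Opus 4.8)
The plan is to derive all three formulas simultaneously from the explicit Pieri rule (\ref{eq:Pieri}), using the structural description of weak strips above $\core(\mu)$ and $\core(P\cup\mu)$ provided by Lemma \ref{ws_in_R}. For part (1), I would start from $\kks{\mu} h_r = \sum_{s=0}^r (-1)^{r-s}\sum_{\ka} \binom{r_{\core(\ka)\core(\mu)}}{r-s}\kks{\ka}$, summed over $\ka$ with $\core(\ka)/\core(\mu)$ a weak $s$-strip. Since $r\le\mu_l$, each such $\ka$ has $s\le\mu_l$, so Lemma \ref{ws_in_R}(1) applies and forces $\ka=\nu\cup(j)$ with $\nu\subset R_{k+1-l}$, $\nu/\mu$ a horizontal strip, and $j=s-|\nu/\mu|$. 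Reindexing the double sum by $\nu$ and then by $s' := r-s$ (the ``defect''), I expect the inner sum to become $\sum_{s'\ge 0}(-1)^{s'}\binom{r_{\nu\mu}}{s'}\kks{\nu\cup(r-|\nu/\mu|-s')}$; the key point to check is that $r_{\core(\nu\cup(j))\,\core(\mu)}$ equals $r_{\nu\mu}$ independently of $j$, which should follow because appending the row $(j)$ far below does not create or block any removable/addable corner relevant to the residue count between $\core(\mu)$ and $\core(\nu)$ — this is essentially the same kind of stability argument already used in Lemma \ref{p=p} and Corollary \ref{theo:r_Rt}.

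For parts (2) and (3), the structure is identical but one applies Lemma \ref{ws_in_R}(2) instead: weak strips over $\core(P\cup\mu)$ are exactly $P\cup\nu\cup(x)$ with the same constraints on $\nu$. The only new ingredient is the computation of $r_{\core(P\cup\nu\cup(x))\,\core(P\cup\mu)}$, i.e.\ how many residues of nonblocked removable corners are contributed by the $k$-rectangles in $P$ on top of the $r_{\nu\mu}$ coming from the $\mu$-part. Here I would argue, as in the proof of Proposition \ref{Rt_prod_hr}, that the removable corner of each $R_{t_i}$ survives (is nonblocked) precisely when the appended row is short enough — concretely when $r-|\nu/\mu|-s < t_i$ (after appropriate indexing), so that the $P$-contribution is $\al{r-|\nu/\mu|-s+1}$ in the generic case. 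This accounts for the $\binom{r_{\nu\mu}+\al{r-|\nu/\mu|+1-s}}{s}$ in part (2). The distinction between (2) and (3) is exactly whether $\maxti$ equals $\mu_l$ or is strictly smaller: if $\maxti=\mu_l$, then the largest rectangle $R_{\maxti}$ has a removable corner of the \emph{same} residue as one of the $\core(\mu)$-removable corners already counted in $r_{\nu\mu}$ (or it gets absorbed into the boundary of $\core(P\cup\mu)$ in a way that removes one residue), so the effective count drops by one, giving $\binom{r_{\nu\mu}+\al{\cdots}-1}{s}$ — equivalently, $T'_{P,\nu,\cdot,r_{\nu\mu}-1}$.

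The main obstacle I anticipate is precisely this residue bookkeeping: pinning down exactly which $\core(P\cup\mu)$-removable corners are $\core(P\cup\nu\cup(x))$-nonblocked and showing their distinct-residue count is $r_{\nu\mu}+\al{r-|\nu/\mu|+1-s}$ (resp.\ minus one). This requires a careful analysis of $\core(P\cup\mu)$ near the ``seam'' between the $P$-part and the $\mu$-part — using Lemma \ref{core_j} and Lemma \ref{core_tila_i} to locate the rows where $\core(P\cup\mu)$ differs from a shifted copy of $\core(\mu)$ — and checking that the residue contributed by each $R_{t_i}$'s corner is distinct from those counted in $r_{\nu\mu}$ exactly when $t_i > \mu_l$, with a collision (hence the $-1$) exactly when some $t_i=\mu_l=\maxti$. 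I would organize this as a short lemma on the corner structure of $\core(P\cup\mu)$, prove it once, and then feed it into the reindexed Pieri sum; the algebraic reindexing itself (collecting terms of each $\kks{P\cup\nu\cup(u-s)}$ and recognizing the binomial coefficient) is then routine and matches the $T$, $T'$ notation of Definition \ref{def:T}.
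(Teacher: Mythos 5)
Your proposal follows essentially the same route as the paper: rewrite the Pieri rule (\ref{eq:Pieri}) using Lemma \ref{ws_in_R} to parametrize the weak strips as $\nu\cup(x)$ (resp.\ $P\cup\nu\cup(x)$), isolate the residue bookkeeping into a separate corner-structure lemma (the paper's Lemma \ref{r_henkei}), and then reindex to collect the binomial coefficients. One small caution: your claimed stability $r_{\core(\nu\cup(j))\core(\mu)}=r_{\nu\mu}$ is not literally independent of $j$ --- the appended row sits in row $l+1$ and blocks the $\mu$-removable corner in row $l$ precisely when $j\ge\mu_l$, so the correct value is $r_{\nu\mu}-\de\left[j\ge\mu_l\right]$ --- but since $j\le r\le\mu_l$ forces this exceptional case to occur only when $j=r=\mu_l$ and $|\nu/\mu|=0$, where the relevant binomial coefficient is $\binom{\cdot}{0}=1$ either way, the discrepancy is harmless, exactly as the paper observes.
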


\begin{proof}
(1)
We transform the right-hand side of Eq.\ (\ref{eq:Pieri}),
Proposition \ref{Pieri}, into the right-hand side of part (1) of the Lemma as follows:
\begin{align}
\kks{\mu} h_r &= \sum_{u=0}^{r} (-1)^{r-u} 
	\sum_{\substack{\ka \\ \text{$\core(\ka)/\core(\mu)$:weak $u$-strip}}} 
	\binom{r_{\core(\ka)\core(\mu)}}{r-u} \kks{\ka} \notag\\
	&\underset{\text{(i)}}{=} \sum_{u=0}^{r} (-1)^{r-u} 
\sum_{\substack{\nu \text{ s.t.} \\ \nu\subset\Rl \\ \nu/\mu\text{: h.s.\ of size}\le u}} 
	   \binom{r_{\core(\nu\cup(u-|\nu/\mu|)),\core(\mu)}}{r-u} \kks{\nu\cup(u-|\nu/\mu|)} \notag\\
	   &= \sum_{\substack{\nu \text{ s.t.} \\ \nu\subset\Rl \\ \nu/\mu\text{: h.s.}}} 
	   \sum_{u=|\nu/\mu|}^{r} (-1)^{r-u} 
	   \binom{r_{\core(\nu\cup(u-|\nu/\mu|)),\core(\mu)}}{r-u} \kks{\nu\cup (u-|\nu/\mu|)} \label{eq:P_mu}\\
	   &\underset{\text{(ii)}}{=} \sum_{\substack{\nu \text{ s.t.} \\ \nu\subset\Rl \\ \nu/\mu\text{: h.s.}}} 
	   \sum_{s=0}^{r-|\nu/\mu|} (-1)^{s} 
	   \binom{r_{\nu,\mu}}{s} \kks{\nu\cup (r-s-|\nu/\mu|)}.\notag
	\end{align}
	
	Here, the equality (i) uses Lemma \ref{ws_in_R} (1)
	in order to change the summation variable from $\ka$ to $\nu$ according to 
	$\ka=\nu\cup(u-|\nu/\mu|)$.
	
	For the equality (ii)
	we use
	(1) of the following Lemma \ref{r_henkei} and put $s=r-u$.
	Note that
	$u-|\nu/\mu| \ge \mu_l$ occurs only if $u-|\nu/\mu| = u = r = \mu_l$ since $u \le r \le \mu_l$,
	in which case we have
	$\displaystyle\binom{r_{\nu\cup(u-|\nu/\mu|),\mu}}{r-u} = 1 = \displaystyle\binom{r_{\nu,\mu}}{r-u}$.
	
	\vspace{2mm}
	We can prove (2) and (3) almost the same as (1),
	using Lemma \ref{ws_in_R} (2) for (i),
	and (2) of the following Lemma \ref{r_henkei} for (ii).

	Note that, in the same way as (1), the case $u-|\nu/\mu|\ge\mu_l$ and $\maxti<\mu_l$
	appears in the expression for $\kks{P\cup\la}$ corresponding to (\ref{eq:P_mu})
	only in the form 
	$\DS\binom{r_{\nu\mu}+\al{\mu_l+1}-1}{0}$, 
	which is equal to $\DS\binom{r_{\nu\mu}+\al{\mu_l+1}}{0}$.

\end{proof}

\begin{lemm}\label{r_henkei}
	Let $\mu$ be as in Lemma \ref{kks_prod_hr}.
	Let $\mu\subset\nu\subset\Rl$ and assume $\nu/\mu$ is a horizontal strip.
	Let $0 \le x \le \nu_l$.
	Then we have
	
	\noindent{$(1)$}
	\vspace{-3mm}
	\[
		r_{\core(\nu\cup(x)),\core(\mu)} = r_{\nu\mu} - \de\left[x\ge\mu_l\right].
	\]

	\noindent{$(2)$}
	Let $P$ and $\al{u}$ $(u \in \Z_{>0})$ be as in Lemma \ref{kks_prod_hr} and assume $\maxti\le\mu_l$.
	Then
	\[
		r_{\core(P\cup\nu\cup(x)),\core(P\cup\mu)} = 
		\begin{cases}
			r_{\nu,\mu} + \al{x+1} - \de\left[x\ge\mu_l\right] & \text{$($if $\maxti<\mu_l)$},\\
			r_{\nu,\mu} + \al{x+1} - 1 & \text{$($if $\maxti=\mu_l$$)$}.
		\end{cases}
	\]

	\[
\begin{tikzpicture}[scale=0.125]
\draw (0,17) rectangle (5,29);
\node at (2.5,23) {\footnotesize $R_{t_1}$};
\draw (5,7) rectangle (12,17);
\node at (8,12) {\footnotesize $R_{t_m}$};
\draw (12,0)--(27,0)--(27,2)--(24,2)--(24,5)--(21,5)--(21,7)--(12,7)--cycle;
\node at (17,3) {\footnotesize $\mu$};

\draw[loosely dotted,very thick] (0,17) |- (12,0);
\node[below] at(12,0) {\footnotesize $\core(P\cup\mu)$};

\draw (0,52) rectangle (5,64);
\node at (2.5,58) {\footnotesize $R_{t_1}$};
\draw (0,42) rectangle (7,52);
\node at (3.5,47) {\footnotesize $R_{t_m}$};
\draw (0,35)--(15,35)--(15,37)--(12,37)--(12,40)--(9,40)--(9,42)--(0,42)--cycle;
\node at (5,38) {\footnotesize $\mu$};
\node[below] at (9,35) {\footnotesize $P\cup\mu$};

\draw (35,17) rectangle (40,29);
\node at (37.5,23) {\footnotesize $R_{t_1}$};
\draw (40,7) rectangle (47,17);
\node at (43.5,12) {\footnotesize $R_{t_m}$};
\draw (47,0) -| (62,2) -| (59,5) -| (56,7) -| (47,0);
\node at (52,3) {\footnotesize $\mu$};

\draw[loosely dotted,very thick] (35,17) |- (47,0);
\node[below] at (47,0) {\footnotesize $\core(P\cup\nu\cup(x))$};

\draw[pattern=dots] (35,29) rectangle (40,30);
\draw[pattern=dots] (40,17) rectangle (46,18);
\draw[pattern=dots] (47,7) rectangle (53,8);

\draw[pattern=north west lines] (56,5) rectangle (59,6);
\draw[pattern=north west lines] (59,2) rectangle (60,3);
\draw[pattern=north west lines] (62,0) rectangle (64,1);
\draw[pattern=dots] (64,0) rectangle (70,1);

\draw (47,8) to [out=20,in=160] node[above]{\footnotesize $x$} (53,8);
\draw (64,0) to [out=-20,in=-160] node[below]{\footnotesize $x$} (70,0);

\draw [red,decorate,decoration={snake,segment length=.2mm,amplitude=.1mm}] (46.9,-0.2) -- (64.1,-0.2) -- (64.1,1.0) -- (62.1,1.0) -- (62.1,1.95) --(60.1,1.95) -- (60.1,2.95) --(59.0,2.95) -- (59.0,5.95) -- (56.0,5.95) -- (56.0,6.95) -- (46.9,6.95) -- (46.9,-0.2);
\draw [red] (59.2,6.2) to [out=45,in=190] (64,8) node[right]{\footnotesize $\nu$};

\draw (35,52) rectangle (40,64);
\node at (37.5,58) {\footnotesize $R_{t_1}$};
\draw (35,42) rectangle (42,52);
\node at (38.5,46) {\footnotesize $R_{t_m}$};
\draw (35,35) -| (50,37) -| (47,40) -| (44,42) -| (35,35);
\draw [red,decorate,decoration={snake,segment length=.2mm,amplitude=.1mm}] (34.9,34.8) -| (52.1,36.10) -| (50.1,37.1) -| (48.1,38.1) -| (47.1,41.1) -| (44.15,42.10) -| (34.9,34.8);
\draw [red] (47.2,41.2) to [out=45,in=190] (52,43) node[right]{\footnotesize $\nu$};
\node at (40,38) {\footnotesize $\mu$};

\node[below] at (47,35) {\footnotesize $P\cup\nu\cup(x)$};

\draw[pattern=dots] (35,64) rectangle (40,65);
\draw[pattern=dots] (40,52) rectangle (41,53);

\draw[pattern=north west lines] (44,40) rectangle (47,41);
\draw[pattern=north west lines] (47,37) rectangle (48,38);
\draw[pattern=north west lines] (50,35) rectangle (52,36);

\draw (35,52) to [out=-20,in=-160] node[below] {\footnotesize $x$} (41,52);

\end{tikzpicture}
\]

\end{lemm}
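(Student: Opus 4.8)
The plan is to prove both parts by writing down the cores that occur explicitly, listing all removable corners of $\core(\mu)$ (resp.\ of $\core(P\cup\mu)$) together with their residues, and then deciding which of them are blocked. For part (1): since $\mu\subset\Rl$ is contained in a $k$-rectangle we have $\core(\mu)=\mu$, and a short computation with Lemma \ref{core_j} — the same one used for the ``if'' direction of Lemma \ref{ws_in_R} (1) — shows that $\core(\nu\cup(x))$ agrees with $\nu\cup(x)$ in every row $\ge 2$ (it may differ only in row $1$, which plays no role below); in particular its entry in row $l+1$ is $x$. The $\mu$-removable corners are the cells $(\rho,\mu_\rho)$ with $\mu_\rho>\mu_{\rho+1}$ (so $\rho=l$ always occurs), and, $\mu$ being contained in a $k$-rectangle, all cells of $\mu$ — hence all these corners — have pairwise distinct residues, their contents filling $k$ consecutive integers. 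For $\rho<l$ the corner $(\rho,\mu_\rho)$ is $\core(\nu\cup(x))$-blocked iff $\nu_{\rho+1}\ge\mu_\rho$, i.e.\ iff it is $\nu$-blocked, whereas for $\rho=l$ it is $\core(\nu\cup(x))$-blocked iff $x\ge\mu_l$ and is never $\nu$-blocked. So passing from $\nu$ to $\core(\nu\cup(x))$ deletes exactly one nonblocked corner, the one in row $l$, and only when $x\ge\mu_l$; since residues are distinct this gives $r_{\core(\nu\cup(x)),\core(\mu)}=r_{\nu\mu}-\DE{x\ge\mu_l}$, which is part (1).

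For part (2) I would first exploit $\maxti\le\mu_l\le\nu_l$: every part of $\mu$ (and of $\nu$) is at least every $t_i$, so iterating Lemma \ref{core_tila_i} as one adds the rectangles of $P$ one at a time gives $\core(P\cup\mu)=\mu\oplus\core(P)$, and likewise $\core(P\cup\nu\cup(x))=(\nu\cup(x))\oplus\core(P)$ when $x\ge\maxti$; when $x<\maxti$ one gets a mild variant in which rows $1,\dots,l$ still have widths $\nu_i+W$ (writing $W=\core(P)_1$), row $l+1$ has width $x+W$, and the staircase below is modified by one extra trailing row of width $x$. Recall from the proof of Proposition \ref{Rt_prod_hr} that $\core(P)$ is a staircase with $\sum_j a_j$ removable corners, the $a_j$ of them arising from $R_{t_j}$ all carrying one and the same residue, these $m$ residues being distinct. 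Hence the removable corners of $\core(P\cup\mu)$ fall into two families: the \emph{$\mu$-type} corners, lying in rows $1,\dots,l$, in bijection with the $\mu$-removable corners and with residues those of the corresponding $\mu$-corner shifted by the constant $W$ (so pairwise distinct); and the \emph{$P$-type} corners, lying in rows $>l$, which are the corners of $\core(P)$ with residues shifted by $-l$, contributing $m$ distinct residues.

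Then I would count. Using $\mu_\rho\ge\maxti$ for every $\rho$ (together with the fact that cells of $\mu$ have distinct residues) one checks that the only possible residue coincidence between a $\mu$-type and a $P$-type corner is that the row-$l$ corner, of residue $\mu_l+W-l$, coincides with the corners coming from the widest rectangle $R_{\maxti}$, of residue $W+\maxti-l$, which happens precisely when $\mu_l=\maxti$. As for blocking in $\core(P\cup\nu\cup(x))$: a $\mu$-type corner in a row $\rho<l$ is blocked iff $\nu_{\rho+1}\ge\mu_\rho$ (exactly as in part (1), since rows $\le l$ only receive the global shift $+W$); the row-$l$ $\mu$-type corner is blocked iff $x\ge\mu_l$ (because $\core(P\cup\nu\cup(x))_{l+1}=x+W$); and the $R_{t_j}$ $P$-type corner is blocked iff $x\ge t_j$ — this is exactly the blocking rule in the proof of Proposition \ref{Rt_prod_hr}, with the width-$x$ row playing the role of the added single row and the block $\nu$, sitting above the staircase, not interfering with it. Thus the nonblocked $\mu$-type corners contribute $r_{\core(\nu\cup(x)),\core(\mu)}=r_{\nu\mu}-\DE{x\ge\mu_l}$ distinct residues (by part (1)), the nonblocked $P$-type corners contribute $\#\{j\mid t_j\ge x+1\}=\al{x+1}$ distinct residues, and inclusion--exclusion with the unique possible overlap yields
\[
r_{\core(P\cup\nu\cup(x)),\core(P\cup\mu)}=\bigl(r_{\nu\mu}-\DE{x\ge\mu_l}\bigr)+\al{x+1}-\DE{\maxti=\mu_l}\cdot\DE{x<\maxti}.
\]
When $\maxti<\mu_l$ the last term vanishes, giving the first case of (2); when $\maxti=\mu_l$ we have $\DE{x\ge\mu_l}=\DE{x\ge\maxti}$ and $\DE{x\ge\maxti}+\DE{x<\maxti}=1$, so the right-hand side becomes $r_{\nu\mu}+\al{x+1}-1$, the second case.

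The hard part will be the honest determination of $\core(P\cup\mu)$ and, above all, of $\core(P\cup\nu\cup(x))$ in the case $x<\maxti$, together with the careful tracking of every removable corner and its residue through the row-shifts produced by $\oplus$ and by the coring algorithm — in particular, the analysis at the interface between the $\mu$-block and the top of the $\core(P)$-staircase, which is what produces the dichotomy $\maxti=\mu_l$ versus $\maxti<\mu_l$, and the verification that the $P$-type corners obey exactly the same blocking rule $x\ge t_j$ as in Proposition \ref{Rt_prod_hr}.
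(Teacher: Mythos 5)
Your proposal follows essentially the same route as the paper's proof: part (1) is identical (reduce to $r_{\nu\cup(x),\mu}$ via $\core(\mu)=\mu$ and $\core(\nu\cup(x))_i=(\nu\cup(x))_i$ for $i\ge 2$), and for part (2) the paper likewise splits the removable corners of $\core(P\cup\mu)=\mu\oplus\core(P)$ into your two families, shows their residue sets meet in at most one residue (exactly when $\maxti=\mu_l$), counts $\al{x+1}$ and $r_{\nu\mu}-\DE{x\ge\mu_l}$ nonblocked residues in the two families, and concludes by the same inclusion--exclusion. The step you defer as ``the hard part'' is exactly where the paper spends its effort: it proves, via Lemma \ref{core_j}, that $\core(P\cup\nu\cup(x))_{l+i}=\core(P\cup(x))_i$ and $\core(P\cup\nu\cup(x))'_{\core(P)_1+i}=\core(\nu\cup(x))'_i$, which reduces the blocking of the $P$-type (resp.\ $\mu$-type) corners to Proposition \ref{Rt_prod_hr} (resp.\ to part (1)) precisely as you intend. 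Two details of your explicit description are inaccurate but harmless: for $x<\maxti$ the portion of $\core(P\cup\nu\cup(x))$ in rows $>l$ is $\core(P\cup(x))$, which has a width-$x$ protrusion at \emph{every} step of the staircase rather than one trailing row; and row $1$ has width $\core(P)_1+\nu_1+x$ rather than $\core(P)_1+\nu_1$ in the boundary case $\nu_1=k+1-l$ --- neither affects your count, since the protrusions do not change which corners are blocked and row $1$ is never the row lying above a removable corner.
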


\begin{proof}
	\noindent{$(1)$:}
	Since $\mu\subset \Rl$
	we have $\core(\mu) = \mu$.
	Since $\nu\subset \Rl$ and $x \le \nu_l$,
	we have $\core(\nu\cup(x))_i = (\nu\cup(x))_i$ for $i\neq 1$.
	Thus $r_{\core(\nu\cup(x)),\core(\mu)} = r_{\nu\cup(x),\mu}$. 
	Moreover,
	$r_{\nu\cup(x),\mu} \neq r_{\nu,\mu}$ happens only if
	the $(l+1)$-th part of $\nu\cup(x)$ blocks the $\mu$-removable corner in the $l$-th row, i.e.\ $x \ge \mu_l$, 
	in which case $r_{\nu\cup(x),\mu} = r_{\nu,\mu} - 1$.

	\noindent{$(2)$:}

	Assume $t_1 < \dots < t_m$ without loss of generality and thus $\maxti=t_m$.
	We put $T=\core(P)_1=\sum_j t_j$.
	Since $\core(P\cup\mu) = \mu\oplus\core(P)$,
	a removable corner $(r,c)$ of $\core(P\cup\mu)$ satisfies one of the following:
	\begin{itemize}
		\item (type 1) $r\ge l+1$, and $(r-l,c)$ is a removable corner of $\core(P)$,
		\item (type 2) $c\ge T+1$, and $(r,c-T)$ is a removable corner of $\mu$.
	\end{itemize}

	We put
	\begin{align*}
		X_j &:= \textrm{Res}\{\text{removable corners of $\core(P\cup\mu)$ of type $j$}\} \\
		Y_j &:= \textrm{Res}\{\text{$\core(P\cup\nu\cup(x))$-nonblocked removable corners of $\core(P\cup\mu)$ of type $j$}\}
	\end{align*}
	for $j=1,2$.

	We denote by $i$ the residue of top addable corner of $\core(P\cup \mu)$.
	Then we have
	$X_1 = \{i+t_1,i+t_2,\ldots,i+t_m\}$
	and 
	$i+\mu_l \in X_2 \subset [i+\mu_l, i+k-1]$.
	Note that 
	$$\{i+t_1,\ldots,i+t_m\} \cap [i+\mu_l, i+k-1] = 
	\begin{cases}
		\emptyset & \text{(if $t_m<\mu_l$)}, \\
		\{i+\mu_l\} & \text{(if $t_m=\mu_l$)}.
	\end{cases}$$

	Next we show that, for $i\ge 1$, 
	\begin{align}
		\core(P\cup\nu\cup(x))_{l+i}&=\core(P\cup(x))_i, \label{eq:Cl_two_former} \\
		\core(P\cup\nu\cup(x))'_{T + i} &= \core(\nu\cup(x))'_i. \label{eq:Cl_two_latter}
	\end{align}

	(\ref{eq:Cl_two_former}) is obvious since the smallest part of $\nu$,
	which is $\nu_l$, 
	is greater than or equal to the largest part of $P\cup(x)$,
	which is $\max\{x,t_m\}$.

	For (\ref{eq:Cl_two_latter}),
	first we note that, by (\ref{eq:Cl_two_former})
	and Figure \ref{fig:c_P_s} in the proof of Proposition \ref{Rt_prod_hr}, we have
	\begin{align*}
	\core(P\cup\nu\cup(x))_{l+1} &=\core(P\cup(x))_1 = T+x, \\
	\core(P\cup\nu\cup(x))_{l+2} &=\core(P\cup(x))_2 = T, \\
						&\vdots\\
		\core(P\cup\nu\cup(x))_{l+k+1-\mu_l} &=\core(P\cup(x))_{k+1-\mu_l} = T.
	\end{align*}

	Then by Lemma \ref{core_j} we have, for $1\le i \le l$,
	\begin{align*}
	\core(P\cup\nu\cup(x))_{i}
	&= \core(P\cup\nu\cup(x))_{i+(k+1-\nu_{i})} + \nu_i \\
	&= 
	\begin{cases}
		\core(P\cup\nu\cup(x))_{l+1} + \nu_1 = T+x+\nu_1 & \text{(if $i=1$ and $\nu_1=k+1-l$)}, \\
		T + \nu_i & \text{(otherwise)},\\
	\end{cases}
	\end{align*}
	where we used 
	$(P\cup\nu\cup(x))_{i} = \nu_i$ for $1\le i \le l$ for the first equality and
	$l+1\le i+(k+1-\nu_i) \le l+(k+1-\mu_n)$ (the first equality holds if and only if $i=1$ and $\nu_1=k+1-l$)
	for the second equality.

	Thus we have
	$\core(P\cup\nu\cup(x))_{i}=\core(\nu\cup(x))_{i}+T$ for $1\le i\le l+1$
	and
	$\core(P\cup\nu\cup(x))_{i}\le T$ for $i> l+1$,
	which implies
	(\ref{eq:Cl_two_latter}). 

		\vspace{2mm}
		Hence, 
		$|Y_1| = r_{\core(P\cup(x)),\core(P)} = \al{x+1}$, 
		and 
		$|Y_2| = r_{\core(\nu\cup(x)),\mu} = r_{\nu\mu}-\de\left[x\ge\mu_l\right]$.

		Moreover $Y_1\cap Y_2 = \{i+\mu_l\}$ if $x < t_m=\mu_l$, and $Y_1\cap Y_2=\emptyset$ otherwise. Then
	\begin{align*}
		r_{\core(P\cup\nu\cup(x)),\core(P\cup\mu)} &= |Y_1|+|Y_2|-|Y_1\cap Y_2| \\
		&=\begin{cases}
		\al{x+1} + r_{\nu\mu} - \de\left[x\ge\mu_l\right] & \text{(if $t_m<\mu_l$)}, \\
		\al{x+1} + r_{\nu\mu} \underbrace{- \de\left[x\ge\mu_l\right] - \de\left[x<t_m\right]}_{=-1} & \text{(if $t_m=\mu_l$)}.
		\end{cases}
	\end{align*}

\end{proof}

Thus Steps (A-1) and (B-1) have been achieved.

\subsection{Steps (A-2) and (B-2)}
The next lemma is technically important to perform the instructions in Step (A-2) and (B-2).

\begin{lemm}\label{binom_heikouidou}
Let $\nu, u, p$ be as in the assumptions in Definition \ref{def:T} and $n$ be an integer.
Then we have the following equalities.
In particular, in either case, the left-hand side does not depend on $p$.

$(1)$	$\DS\sum_{i=0}^{u} \binom{p+n+i-1}{i} T_{\nu,u-i,p} = \sum_{s=0}^{u} \binom{n+s-1}{s} \kks{\nu\cup(u-s)}$.

$(2)$	$\DS\sum_{i=0}^{u} \binom{p+n+i-1}{i} T'_{P,\nu,u-i,p} = \sum_{s=0}^{u} \binom{n-\al{u+1-s}+s-1}{s} \kks{P\cup \nu \cup (u-s)}$. 
\end{lemm}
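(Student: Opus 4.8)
The plan is to expand both sides using Definition \ref{def:T}, extract the coefficient of each $\kks{\nu\cup(u-s)}$ (respectively $\kks{P\cup\nu\cup(u-s)}$), and recognize the resulting numerical identity as an instance of Lemma \ref{binom_lem1}.

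For part $(1)$, substituting the definition of $T_{\nu,u-i,p}$ turns the left-hand side into the double sum $\sum_{i=0}^{u}\sum_{j=0}^{u-i}\binom{p+n+i-1}{i}(-1)^{j}\binom{p}{j}\kks{\nu\cup(u-i-j)}$. Reindexing by $s=i+j$, the coefficient of $\kks{\nu\cup(u-s)}$ becomes $\sum_{j=0}^{s}(-1)^{j}\binom{p}{j}\binom{p+n+s-j-1}{s-j}$, and Lemma \ref{binom_lem1} (with $a=p$, $b=p+n$, $c=s$) evaluates this to $(-1)^{s}\binom{-n}{s}=\binom{n+s-1}{s}$, which is exactly the claimed coefficient. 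In particular it does not depend on $p$.

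For part $(2)$ the same expansion gives, as the coefficient of $\kks{P\cup\nu\cup(u-s)}$, the sum $\sum_{i+j=s}\binom{p+n+i-1}{i}(-1)^{j}\binom{p+\al{u-i+1-j}}{j}$. The crucial point --- and the only one needing care --- is that along the diagonal $i+j=s$ one has $u-i+1-j=u+1-s$, so $\al{u-i+1-j}=\al{u+1-s}$ is independent of the summation index and can be pulled out. The coefficient then reads $\sum_{j=0}^{s}(-1)^{j}\binom{p+\al{u+1-s}}{j}\binom{p+n+s-j-1}{s-j}$, and Lemma \ref{binom_lem1} (now with $a=p+\al{u+1-s}$, $b=p+n$, $c=s$) turns it into $(-1)^{s}\binom{\al{u+1-s}-n}{s}=\binom{n-\al{u+1-s}+s-1}{s}$, matching the right-hand side, again independently of $p$.

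Beyond this reindexing and the collapse $\al{u-i+1-j}=\al{u+1-s}$, the argument is purely computational; there is no genuine obstacle, only the bookkeeping of summation ranges, and since every binomial coefficient appearing has a nonnegative lower index the usual conventions cause no trouble. (Alternatively one could argue via generating functions, using $\binom{m+i-1}{i}=[X^{i}](1-X)^{-m}$, but invoking Lemma \ref{binom_lem1} is more economical.)
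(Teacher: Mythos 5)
Your proposal is correct and follows essentially the same route as the paper: expand via Definition \ref{def:T}, reindex along the diagonal $s=i+j$ (noting the collapse $\al{u-i+1-j}=\al{u+1-s}$), and apply Lemma \ref{binom_lem1} followed by the negation identity $(-1)^{s}\binom{\al{u+1-s}-n}{s}=\binom{n-\al{u+1-s}+s-1}{s}$. The only cosmetic difference is that the paper writes out case (2) and declares (1) parallel, whereas you treat both explicitly.
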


\begin{proof}
	Since both equality can be proved in a parallel manner, we prove (2) here.
	By the definition of $T'_{P,\nu,u-i,p}$, we have
	\begin{align*}
		(\textrm{LHS}) &= \sum_{i=0}^{u} \binom{p+n+i-1}{i} \sum_{s=0}^{u-i} (-1)^s \binom{p+\al{u-i+1-s}}{s} \kks{P\cup\nu\cup(u-i-s)}, \\
\intertext{then putting $t=i+s$,}
		      &= \sum_{t = 0}^{u} \bigg( \sum_{s=0}^{t} \binom{p+n-1+t-s}{t-s} (-1)^s \binom{p+\al{u+1-t}}{s} \bigg) \kks{P\cup\nu\cup(u-t)}, \\
\intertext{then using Lemma \ref{binom_lem1},}
		      &= \sum_{t=0}^{u} (-1)^t \binom{-n+\al{u+1-t}}{t} \kks{P\cup\nu\cup(u-t)} \\
		      &= \sum_{t=0}^{u} \binom{n-\al{u+1-t}+t-1}{t} \kks{P\cup\nu\cup(u-t)}.
	\end{align*}
\end{proof}

Now we can express $\kks{\la}$ (resp.\ $\kks{P\cup\la}$)
as a linear combination of $\kks{\mu}h_r$ (resp.\ $\kks{P\cup\mu}h_r$) as proposed in the description of Step (A-2) (resp.\ (B-2)).

\begin{lemm}\label{g_mu_r_expand}
	Let $P$ and $\al{u}$ $(u \in \Z_{>0})$ be as in \NP.
	Let $\la,\bla,\bl$ be as in \Nla in Section \ref{sec:dist_ex}. 
	Write $r=\la_{l(\la)}$.
	Assume that $\bl\ge 1$ and $\maxti \le \bla_\bl$.
\begin{flalign*}
\intertext{$(1)$ We have}
&&	\kks{\la} &= \sum_{\substack{\mu \text{\rm \ s.t.}\\ \bar\la\subset\mu\subset \Rbl}} \sum_{q\in \mathbb{Z}}
	A_{\mu,\bar\la,q} \kks{\mu} \sum_{i\ge 0}
	\binom{q+i-1}{i} h_{r-|\mu/\bar\la|-i}. & \\
\intertext{$(2)$ If $\maxti <\bla_\bl$, we have} 
&&	\kks{P\cup\la} &= \sum_{\substack{\mu \text{\rm \ s.t.}\\ \bar\la\subset\mu\subset \Rbl}} \sum_{q\in \mathbb{Z}} 
	A_{\mu,\bar\la,q} \kks{P\cup\mu} \sum_{i\ge 0}
	\binom{q+i+\al{r}-1}{i} h_{r-|\mu/\bar\la|-i}. & \\
\intertext{$(3)$ If $\maxti =\bla_\bl$, we have}
&&	\kks{P\cup\la} &= \sum_{\substack{\mu \text{\rm \ s.t.}\\ \bar\la\subset\mu\subset \Rbl}} \sum_{q\in \mathbb{Z}}
	A_{\mu,\bar\la,q} \kks{P\cup\mu} \sum_{i\ge 0}
	\binom{q+i+\al{r}-2+\de\left[\mu_\bl\neq \bar\la_\bl\right]}{i} h_{r-|\mu/\bar\la|-i}. & 
\end{flalign*}
Here, in all of the three expressions, the number
$A_{\mu,\bar\la,q}$ is defined by the following recursion formula:
\begin{align*}
	A_{\bar\la,\bar\la,q} &= \delta_{q,r_{\bar\la\bar\la}}, \\
	A_{\mu,\bar\la,q} &= -\sum_{\substack{\mu/\ka\text{\rm: h.s.} \\ \bar\la\subset\ka\subsetneq\mu}} A_{\ka,\bar\la,q-(r_{\mu\mu}-r_{\mu\ka})} \ \ \text{for $\bar\la\subsetneq\mu\subset\Rbl$}.
\end{align*}
Notice that for each $\mu$, $A_{\mu,\bar\la,q} = 0$ except for finitely many $q$.
The explicit value of $A_{\mu,\bar\la,q}$ will be given in Lemma \ref{A_la_q} below.
\end{lemm}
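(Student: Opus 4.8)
The plan is to prove each of the three displayed identities by direct verification. Write the right-hand side of the claimed formula as $F$ (so in part (1) the assertion is $\kks{\la}=F$ with $F=\sum_{\mu}\sum_q A_{\mu,\bla,q}\kks{\mu}\sum_{i\ge0}\binom{q+i-1}{i}h_{r-|\mu/\bla|-i}$). I would expand every product $\kks{\mu}h_\cdot$ (resp.\ $\kks{P\cup\mu}h_\cdot$) occurring in $F$ by the explicit Pieri rule of Lemma \ref{kks_prod_hr}, reorganize the resulting multiple sum, and show it collapses to the single term $\kks{\la}$ (resp.\ $\kks{P\cup\la}$). A few remarks make this legitimate: the recursion defining $A_{\mu,\bla,q}$ is unambiguous because $\{\mu:\bla\subset\mu\subset\Rbl\}$ is finite and the recursion strictly decreases $|\mu|$, and an easy induction shows each $A_{\mu,\bla,\cdot}$ has finite support; with the conventions $h_j=0$, $\binom{a}{b}=0$ for $b<0$ and $T_{\nu,u,p}=T'_{P,\nu,u,p}=0$ for $u<0$, all the sums $\sum_{i\ge0}$ truncate; and every $\mu$ in the interval $[\bla,\Rbl]$ satisfies $l(\mu)=\bl$ and $\mu_\bl\ge\bla_\bl\ge r$ (the last inequality because $\la$ is a partition), so Lemma \ref{kks_prod_hr} genuinely applies to each $\kks{\mu}h_{r-|\mu/\bla|-i}$ --- using part (1) of that lemma for (1), part (2) for (2) (valid since $\maxti<\bla_\bl\le\mu_\bl$), and, in (3), part (3) when $\mu_\bl=\bla_\bl$ and part (2) when $\mu_\bl>\bla_\bl$.

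Carrying out (1) first: substituting $\kks{\mu}h_{r-|\mu/\bla|-i}=\sum_{\nu:\mu\subset\nu\subset\Rbl,\ \nu/\mu\text{ h.s.}}T_{\nu,\,r-|\nu/\bla|-i,\,r_{\nu\mu}}$ (using $|\mu/\bla|+|\nu/\mu|=|\nu/\bla|$) into $F$ and interchanging summations so as to collect by $\nu$, the inner index $\mu$ then runs over those $\mu$ with $\bla\subset\mu\subseteq\nu$ and $\nu/\mu$ a horizontal strip. For each fixed $\nu$ and $\mu$, the sum $\sum_{i\ge0}\binom{q+i-1}{i}T_{\nu,\,r-|\nu/\bla|-i,\,r_{\nu\mu}}$ is exactly of the form handled by Lemma \ref{binom_heikouidou}(1), with parameters $u=r-|\nu/\bla|$, $p=r_{\nu\mu}$ and $n=q-r_{\nu\mu}$, and so becomes $\sum_{s\ge0}\binom{(q-r_{\nu\mu})+s-1}{s}\kks{\nu\cup(r-|\nu/\bla|-s)}$, with the $T$'s gone. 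This gives
\[
F=\sum_{\nu:\bla\subset\nu\subset\Rbl}\ \sum_{s\ge0}\left(\ \sum_{\substack{\mu:\bla\subset\mu\subseteq\nu\\ \nu/\mu\text{ h.s.}}}\ \sum_q A_{\mu,\bla,q}\binom{(q-r_{\nu\mu})+s-1}{s}\right)\kks{\nu\cup(r-|\nu/\bla|-s)},
\]
and it remains to show the bracketed coefficient equals $\delta_{\nu,\bla}\,\delta_{s,0}$. For $\nu=\bla$ only $\mu=\bla$ survives and, since $A_{\bla,\bla,q}=\delta_{q,r_{\bla\bla}}$, the bracket is $\binom{s-1}{s}$, which is $1$ for $s=0$ and $0$ otherwise, producing exactly $\kks{\bla\cup(r)}=\kks{\la}$. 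For $\nu\supsetneq\bla$ I would put $m=q-r_{\nu\mu}$ to rewrite the bracket as $\sum_m\binom{m+s-1}{s}\bigl(\sum_\mu A_{\mu,\bla,m+r_{\nu\mu}}\bigr)$ and observe that the recursion defining $A$ is equivalent to $\sum_{\mu:\bla\subset\mu\subseteq\nu,\ \nu/\mu\text{ h.s.}}A_{\mu,\bla,q'-(r_{\nu\nu}-r_{\nu\mu})}=0$ for all $\nu\supsetneq\bla$ and all $q'$ (the $\mu=\nu$ summand being $A_{\nu,\bla,q'}$, as $r_{\nu\nu}-r_{\nu\nu}=0$); taking $q'=m+r_{\nu\nu}$ shows $\sum_\mu A_{\mu,\bla,m+r_{\nu\mu}}=0$, so the whole $\nu\supsetneq\bla$ contribution vanishes and $F=\kks{\la}$.

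Parts (2) and (3) run in parallel, with $\kks{P\cup\mu}$, the $T'_{P,\nu,\cdot}$ of Definition \ref{def:T} and Lemma \ref{binom_heikouidou}(2) in place of $\kks{\mu}$, $T_{\nu,\cdot}$ and Lemma \ref{binom_heikouidou}(1). Applying Lemma \ref{binom_heikouidou}(2) with $n=q+\al{r}-r_{\nu\mu}$ lands one on the same shape of expression, with $\binom{q+\al{r}-r_{\nu\mu}-\al{(r-|\nu/\bla|)+1-s}+s-1}{s}$ inside the bracket; since the $\mu$-dependence still enters only through $q-r_{\nu\mu}$, the $\nu\supsetneq\bla$ terms vanish by the very same recursion identity, while for $\nu=\bla$ the coefficient of $\kks{P\cup\bla\cup(r-s)}$ comes out as $\binom{\al{r}-\al{r+1-s}+s-1}{s}$ --- equal to $1$ for $s=0$ and $0$ for $s\ge1$ precisely because $t_1,\dots,t_m$ are \emph{distinct}, so $\al{r+1-s}-\al{r}=\#\{v:r+1-s\le t_v\le r-1\}\le s-1$ and the top entry lies in $\{0,\dots,s-1\}$. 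For (3), when $\maxti=\bla_\bl$ the split $\mu_\bl=\bla_\bl$ versus $\mu_\bl>\bla_\bl$ is exactly what governs whether the relevant $T'$ carries parameter $r_{\nu\mu}-1$ (from Lemma \ref{kks_prod_hr}(3)) or $r_{\nu\mu}$ (from Lemma \ref{kks_prod_hr}(2)), and the extra summand $\de\left[\mu_\bl\neq\bar\la_\bl\right]$ in the claimed binomial is what makes both sub-cases, after Lemma \ref{binom_heikouidou}(2), collapse to the identical expression, so the argument concludes as in (2).

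The two genuinely delicate steps, where I expect to spend real effort, are: (i) the bookkeeping in the reorganization and in matching the parameter triples $(u,p,n)$ when invoking Lemma \ref{binom_heikouidou}, keeping track of which sums truncate; and (ii) the vanishing of the $\nu=\bla$, $s\ge1$ terms in (2)--(3), which hinges essentially on the distinctness of $t_1,\dots,t_m$ --- this is precisely the step that would break for a repeated rectangle, consistently with $\kks{R_t\cup R_t}\neq\kks{R_t}^2$. Everything else is routine once the vanishing conventions for binomial coefficients and the $h_j$ are in place.
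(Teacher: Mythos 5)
Your proposal is correct and follows essentially the same route as the paper: expand via Lemma \ref{kks_prod_hr}, convert the $i$-sums with Lemma \ref{binom_heikouidou} (whose independence of $p$ is exactly what makes the coefficient depend on $\mu$ only through $q-r_{\nu\mu}$), and kill every $\nu\supsetneq\bar\la$ contribution with the recursion defining $A_{\mu,\bar\la,q}$, with the distinctness of the $t_i$ handling the final binomial vanishing in (2)--(3). The only difference is organizational — you apply Lemma \ref{binom_heikouidou} to all terms up front and then collect coefficients of $\kks{\nu\cup(u-s)}$, whereas the paper pairs the $\mu=\nu$ and $\mu\subsetneq\nu$ contributions as $(X)+(Y)=0$ before invoking that lemma a final time — which does not change the substance of the argument.
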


\noindent{\it Remark.}
In the above recursion formula, $r_{\mu\mu} - r_{\mu\ka} \ge 0$ always holds
because there must be a $\mu$-removable corner in every row in which there is a $\mu$-nonblocked $\ka$-removable corner
since $\mu/\ka$ is a horizontal strip.

\begin{proof}
(1) By Lemma \ref{kks_prod_hr}(1), 
	\begin{align*}
		(\textrm{RHS}) &= \sum_{\substack{\mu \text{ s.t.}\\ \bar\la\subset\mu\subset \Rbl}} \sum_{q} A_{\mu,\bar\la,q} \sum_{\substack{\mu\subset\nu\subset \Rbl \\ \nu/\mu\text{: h.s.}}} \sum_{i\ge 0}
		\binom{q+i-1}{i} T_{\nu,r-|\mu/\bar\la|-i-|\nu/\mu|,r_{\nu\mu}}, \\
\intertext{then splitting the third summation according to whether $\mu=\nu$ or $\mu\subsetneq\nu$,}
			&= \sum_{\substack{\mu \text{ s.t.}\\ \bar\la\subset\mu\subset \Rbl}} \sum_{q} A_{\mu,\bar\la,q} \sum_{i\ge 0}
			\binom{q+i-1}{i} T_{\mu,r-|\mu/\bar\la|-i,r_{\mu\mu}} \\
			&\phantom{=}+ \sum_{\substack{\mu,\nu \text{ s.t.}\\ \bar\la\subset\mu\subsetneq\nu\subset \Rbl \\ \nu/\mu\text{: h.s.} }} \sum_{q} A_{\mu,\bar\la,q} \sum_{i\ge 0}
			\binom{q+i-1}{i} T_{\nu,r-|\nu/\bar\la|-i,r_{\nu\mu}}, \\
\intertext{then replacing the variable $\mu$ for the first summation by $\nu$, and splitting it again according to whether $\bar\la=\nu$ or $\bar\la\subsetneq\nu$, and rearranging the summands,}
		      &= \sum_{q} A_{\bar\la,\bar\la,q} \sum_{i\ge 0} \binom{q+i-1}{i} T_{\bar\la,r-i,r_{\bar\la\bar\la}} \\
				 &\phantom{=}+\sum_{\substack{\nu \text{ s.t.} \\ \bar\la\subsetneq\nu\subset\Rbl}}
			\bigg(\underbrace{\sum_{q}A_{\nu,\bar\la,q}\sum_{i\ge 0} \binom{q+i-1}{i} T_{\nu,r-|\nu/\bar\la|-i,r_{\nu\nu}}}_{(X)} \\
			&\phantom{+\sum_{\substack{\nu \text{ s.t.} \\ \bar\la\subsetneq\nu\subset\Rl}}}
			+ \underbrace{\sum_{q} \sum_{\substack{\mu\text{ s.t.}\\ \bar\la\subset\mu\subsetneq\nu \\ \nu/\mu\text{: h.s.} }}
			A_{\mu,\bar\la,q} \sum_{i\ge 0} 
			\binom{q+i-1}{i} T_{\nu,r-|\nu/\bar\la|-i,r_{\nu\mu}} }_{(Y)} \bigg).
	\end{align*}
Then, by the definition of $A_{\nu,\bar\la,q}$, noting that $\bar\la\subsetneq\nu$, 
\begin{align*}
	(X) &= - \sum_{q} \sum_{\substack{\mu \text{ s.t.} \\ \nu/\mu\text{: h.s.} \\ \bar\la\subset\mu\subsetneq\nu }} A_{\mu,\bar\la,q-(r_{\nu\nu}-r_{\nu\mu})}
		\sum_{i \ge 0} \binom{q+i-1}{i} T_{\nu,r-|\nu/\bar\la|-i,r_{\nu\nu}}, \\
\intertext{then replacing $q$ by $q+(r_{\nu\nu}-r_{\nu\mu})$,}
	&= -\sum_{q} \sum_{\substack{\mu \text{ s.t.} \\ \nu/\mu\text{: h.s.} \\ \bar\la\subset\mu\subsetneq\nu }} A_{\mu,\bar\la,q}
		\sum_{i \ge 0} \binom{q+r_{\nu\nu}-r_{\nu\mu}+i-1}{i} T_{\nu,r-|\nu/\bar\la|-i,r_{\nu\nu}}, \\
\intertext{then using the independence of the LHS on $p$ of Lemma \ref{binom_heikouidou}(1)
	(note that the range of $i$ can be limited to $0\le i \le r-|\nu/\bla|$ since $i$ originally occurs in $h_{r-|\mu/\bla|-i}$
	in the statement of part (1) of the Lemma),}
	&= -\sum_{q} \sum_{\substack{\mu \text{ s.t.} \\ \nu/\mu\text{: h.s.} \\ \bar\la\subset\mu\subsetneq\nu }} A_{\mu,\bar\la,q}
		\sum_{i \ge 0} \binom{q+i-1}{i} T_{\nu,r-|\nu/\bar\la|-i,r_{\nu\mu}} \\
	    &= -(Y).
\end{align*}
Hence,
\begin{align*}
	(\textrm{RHS}) 	&= \sum_{q} A_{\bar\la,\bar\la,q} \sum_{i\ge 0} \binom{q+i-1}{i} T_{\bar\la,r-i,r_{\bar\la\bar\la}} \\
		&= \sum_{i\ge 0} \binom{r_{\bar\la\bar\la}+i-1}{i} T_{\bar\la,r-i,r_{\bar\la\bar\la}}, \\
\intertext{again by Lemma \ref{binom_heikouidou}(1), noting that $\binom{0+s-1}{s}$ vanishes unless $s=0$,}
		&= \kks{\la}.
\end{align*}

\noindent
(3) is proved almost parallel to (1):
By Lemma \ref{kks_prod_hr}(2) and (3),
\begin{align*}
	(\textrm{RHS}) &= \sum_{\substack{\mu\text{ s.t.}\\ \bar\la\subset\mu\subset \Rbl}} 
	 \sum_{q} A_{\mu,\bar\la,q} 
	 \sum_{\substack{\mu\subset\nu\subset \Rbl \\ \nu/\mu\text{: h.s.}}} \\
	&\qquad\qquad
	 \sum_{i\ge 0}
	 \binom{q+i+\al{r}-2+\de\left[\mu_l\neq\bar\la_l\right]}{i} T'_{P,\nu,r-|\mu/\bar\la|-i-|\nu/\mu|,r_{\nu\mu}-\de\left[\mu_l=\bar\la_l\right]}, \\
\intertext{then, by Lemma \ref{binom_heikouidou}(2), shifting $p$ by $\DE{\mu_l=\bla_\bl}$
	and noting that $\DE{\mu_l\neq\bla_\bl} + \DE{\mu_l=\bla_\bl} = 1$,}
&= \sum_{\substack{\mu\text{ s.t.}\\ \bar\la\subset\mu\subset \Rbl}}
	   \sum_{q} A_{\mu,\bar\la,q} 
	   \sum_{\substack{\mu\subset\nu\subset \Rbl \\ \nu/\mu\text{: h.s.}}} 
	   \sum_{i\ge 0}
	\binom{q+i+\al{r}-1}{i} T'_{P,\nu,r-|\nu/\bar\la|-i,r_{\nu\mu}}. \\
\intertext{Note that the following deformation is also valid for the case $\maxti<\bla_\bl$.
	Applying the same argument as (1),}
&= \sum_{i\ge 0} \binom{r_{\bar\la\bar\la}+\al{r}-1+i}{i} T'_{P,\bar\la,r-i,r_{\bar\la\bar\la}}, \\
\intertext{then by Lemma \ref{binom_heikouidou}(2),}
&= \sum_{s\ge 0} \binom{-\al{r+1-s}+\al{r}-1+s}{s} \kks{P\cup\bar\la\cup(r-s)} \\
	&= \kks{P\cup\la}.
\end{align*}
Here the last equality follows from $\binom{-\al{r+1-s}+\al{r}-1+s}{s}=(-1)^s\binom{\al{r+1-s}-\al{r}}{s}$ and 
$0\le \al{r+1-s}-\al{r} \le s-1$ for $s\ge 1$.

\vspace{2mm}
For (2), we have
\begin{align*}
	(\textrm{RHS}) &= \sum_{\substack{\mu\text{ s.t.}\\ \bar\la\subset\mu\subset \Rbl}} 
	 \sum_{q} A_{\mu,\bar\la,q}
	 \sum_{\substack{\mu\subset\nu\subset \Rbl \\ \nu/\mu\text{: h.s.}}} 
	 \sum_{i\ge 0}
	\binom{q+i+\al{r}-1}{i} T'_{P,\nu,r-|\mu/\bar\la|-i-|\nu/\mu|,r_{\nu\mu}},
\end{align*}
which is equal to $\kks{P\cup\la}$
since this sum has exactly the same form as appeared in the proof of (3).
\end{proof}

In fact we can explicitly solve the recursion formula of $A_{\mu,\bar\la,q}$ appeared in the previous proposition.
This result is needed in the author's following paper \cite{Takigiku} and included in Appendix \ref{sec:apdx_A_la_q}.

\vspace{2mm}

Now Step (A) and (B) have been accomplished.

\subsection{Step (C)}
We multiply $\kks{\la}$ by $\kks{P}$, and express it as a linear combination of $K$-$k$-Schur functions,
and solve it:

\begin{theo}\label{P_mu_r} 
	Let $P$ and $\al{u}$ $($for $u\in\Z_{>0})$ be as in \NP in Section \ref{sec:general}, before Proposition \ref{P_factor}.
	Let $\la,\bla,\bl$ be as in \Nla in Section \ref{sec:dist_ex}.
	Write $r=\la_{l(\la)}$.
	Assume $\max_i\{t_i\} < \bla_\bl$.
	Then we have
	\begin{flalign*}
	(1) &&
		\kks{P}\kks{\la} &= \sum_{s=0}^{r} (-1)^s \binom{\al{r+1-s}}{s} \kks{P\cup\bar\la\cup(r-s)}. & \\
	(2) &&
		\kks{P\cup\la} &= \kks{P} \sum_{s=0}^{r} \binom{\al{r}+s-1}{s} \kks{\bar\la\cup(r-s)}. &
	\end{flalign*}
	In particular, if $t_n < r$ then $\al{r}=0$ and
	\[
		\kks{P\cup\la} = \kks{P} \kks{\la}.
	\]
\end{theo}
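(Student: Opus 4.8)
The plan is to establish (1) and (2) simultaneously by induction on $l(\la)$, using the convention $\bla_0=\infty$ so that the hypothesis $\maxti<\bla_\bl$ is automatic when $l(\la)=1$. That case is the base: for $\la=(r)$ we have $\bla=\emptyset$, and (1) is Proposition~\ref{Rt_prod_hr} while (2) is Theorem~\ref{R_t_r}, both after the substitution $s\leftrightarrow r-s$ and the identifications $\kks{(s)}=h_s$, $\kks{\emptyset}=1$.

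For the inductive step, let $l(\la)\ge 2$ (so $\bl\ge 1$) and suppose (1) and (2) hold for all $k$-bounded partitions of length $<l(\la)$ satisfying \Nla\ with $\maxti$ strictly below their penultimate part. I would first record the key splitting fact: for every $\mu$ with $\bla\subset\mu\subset\Rbl$ one has $\kks{P\cup\mu}=\kks{P}\kks{\mu}$. Indeed, $\Rbl=R_{k+1-\bl}$ has exactly $\bl$ rows and $\mu_\bl\ge\bla_\bl>0$, so every such $\mu$ has length exactly $\bl=l(\la)-1$; moreover $\mu$ again satisfies \Nla, satisfies $\maxti<\mu_{\bl-1}$ when $\bl\ge 2$, and, crucially, $\mu_{l(\mu)}=\mu_\bl\ge\bla_\bl>\maxti$. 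Hence the induction hypothesis (or Theorem~\ref{R_t_r} when $\bl=1$) applies to $\mu$, and since $\mu_{l(\mu)}>\maxti$ forces $\al{\mu_{l(\mu)}}=0$, the ``in particular'' clause of (2) degenerates to $\kks{P\cup\mu}=\kks{P}\kks{\mu}$.

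Now I would feed Lemma~\ref{g_mu_r_expand} into Step~(C). Multiplying the identity of Lemma~\ref{g_mu_r_expand}(1) by $\kks{P}$ and replacing each $\kks{P}\kks{\mu}$ by $\kks{P\cup\mu}$ produces the right-hand side of Lemma~\ref{g_mu_r_expand}(2) but with the shift $\al{r}$ replaced by $0$. The telescoping in the proof of Lemma~\ref{g_mu_r_expand} uses only the recursion defining $A_{\mu,\bla,q}$ and the $p$-independence of Lemma~\ref{binom_heikouidou}, so it goes through verbatim for an arbitrary shift; running it with shift $0$ collapses the sum to $\sum_{i\ge 0}\binom{r_{\bla\bla}+i-1}{i}\,T'_{P,\bla,r-i,r_{\bla\bla}}$, which Lemma~\ref{binom_heikouidou}(2) turns into $\sum_{s}(-1)^s\binom{\al{r+1-s}}{s}\kks{P\cup\bla\cup(r-s)}$; this is (1). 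For (2), I would instead start from Lemma~\ref{g_mu_r_expand}(2), use $\kks{P\cup\mu}=\kks{P}\kks{\mu}$ to factor $\kks{P}$ out of every term, and then apply the ``$P$-free'' telescoping (the argument behind Lemma~\ref{g_mu_r_expand}(1) carrying the shift $\al{r}$) followed by Lemma~\ref{binom_heikouidou}(1), obtaining $\kks{P\cup\la}=\kks{P}\sum_s\binom{\al{r}+s-1}{s}\kks{\bla\cup(r-s)}$. The final ``in particular'' assertion is then immediate: $t_m<r$ gives $\al{r}=0$, so only the $s=0$ term of (2) survives.

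I expect the main difficulty to be conceptual rather than computational: one cannot prove the bare identity $\kks{P\cup\la}=\kks{P}\kks{\la}$ in isolation, so the precise formulas (1) and (2) must be carried along, and the induction must be set up for the whole family of $\la$ satisfying \Nla\ with $\maxti<\bla_\bl$. The point that makes the machine turn is the elementary observation that $\bla\subset\mu\subset\Rbl$ already forces $\mu_{l(\mu)}>\maxti$, so the inner partitions appearing in Lemma~\ref{g_mu_r_expand} are precisely those from which $\kks{P}$ splits off. The remaining work --- checking that the telescoping in Lemma~\ref{g_mu_r_expand} is insensitive to the value of the shift, and tracking the binomial identities (Lemma~\ref{binom_heikouidou} together with $\binom{-a+s-1}{s}=(-1)^s\binom{a}{s}$) --- is routine.
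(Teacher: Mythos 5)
Your proof is correct and follows essentially the same route as the paper: induction on the length of $\la$ with base case Proposition \ref{Rt_prod_hr} and Theorem \ref{R_t_r}, the splitting $\kks{P\cup\mu}=\kks{P}\kks{\mu}$ for the intermediate partitions $\bla\subset\mu\subset\Rbl$ supplied by the induction hypothesis (your explicit check that every such $\mu$ has last part exceeding $\maxti$, hence falls under the ``in particular'' clause, is exactly the point the paper leaves implicit), and the telescoping of Lemma \ref{g_mu_r_expand} combined with Lemma \ref{binom_heikouidou}(2) to reach (1). The only divergence is minor: the paper deduces (2) from (1) by inverting the unitriangular binomial matrix of Lemma \ref{binom_inv_matrix}, whereas you rederive (2) directly from Lemma \ref{g_mu_r_expand}(2) via the shifted telescoping and Lemma \ref{binom_heikouidou}(1); both routes are valid.
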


\begin{proof}
	(2) follows from (1) and Lemma \ref{binom_inv_matrix}.
	We prove (1) by induction on $\bl\ge 0$.
	The case $\bl=0$ was proved in Proposition \ref{Rt_prod_hr} and Theorem \ref{R_t_r}.
	Assume $\bl\ge 1$.

	From Lemma \ref{g_mu_r_expand},
	\begin{align*}
		(\textrm{LHS}) &= \kks{P} \sum_{\substack{\mu\text{ s.t.}\\ \bar\la\subset\mu\subset \Rbl}} \sum_{q} A_{\mu,\bar\la,q} \kks{\mu} \sum_{i\ge 0} \binom{q+i-1}{i} h_{r-|\mu/\bar\la|-i}, \\
\intertext{by the induction hypothesis, we have $\kks{P}\kks{\mu}=\kks{P\cup\mu}$ in the above summation. Hence}
&= \sum_{\substack{\mu\text{ s.t.}\\ \bar\la\subset\mu\subset \Rbl}} \sum_{q} A_{\mu,\bar\la,q} \kks{P\cup\mu} \sum_{i\ge 0} \binom{q+i-1}{i} h_{r-|\mu/\bar\la|-i}, \\
\intertext{then by Lemma \ref{kks_prod_hr}(2), (notice that $\mu_\bl\ge\bla_\bl>\maxti$)}
&=\sum_{\substack{\mu\text{ s.t.}\\ \bar\la\subset\mu\subset \Rbl}} \sum_{q} A_{\mu,\bar\la,q} \sum_{\substack{\mu\subset\nu\subset \Rbl \\ \nu/\mu\text{: h.s.}}} \sum_{i\ge 0}
		\binom{q+i-1}{i} T'_{P,\nu,r-|\nu/\bar\la|-i,r_{\nu,\mu}}, \\
\intertext{then by doing the same argument as Lemma \ref{g_mu_r_expand}(1),
(formally replacing $T_{\dots}$ by $T'_{P,\dots}$ and using Lemma \ref{binom_heikouidou}(2) instead of (1),
the proof works)}
		&=\sum_{i\ge 0} \binom{r_{\bar\la\bar\la}+i-1}{i} T'_{P,\bar\la,r-i,r_{\bar\la\bar\la}}, \\
\intertext{then by Lemma \ref{binom_heikouidou}(2),}
		      &= \sum_{s=0}^{r} \binom{-\al{r+1-s}+s-1}{s} \kks{P\cup\bar\la\cup(r-s)} \\
		      &= \sum_{s=0}^{r} (-1)^s \binom{\al{r+1-s}}{s} \kks{P\cup\bar\la\cup(r-s)}.
	\end{align*}
\end{proof}

Now we can achieve
our goal in this section.

\begin{theo}\label{dist_krec_factor}
	For $1\le t_1 < \cdots < t_m \le k$
	and $a_1,\dots,a_m>0$, 
\[
	\kks{R_{t_1}^{a_1}\cup\cdots\cup R_{t_m}^{a_m}} 
	= \kks{R_{t_1}^{a_1}}\cdots\kks{R_{t_m}^{a_m}}.
\]
\end{theo}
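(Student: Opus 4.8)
The plan is to prove this by a single induction on the total number $N=\sum_{i=1}^{m}a_i$ of rectangles in the union, splitting according to whether some multiplicity $a_i$ exceeds $1$ or all of them equal $1$; the first case is handled with Proposition \ref{Rt_twice} and the second with Theorem \ref{P_mu_r}. (The base case $N=1$ is the second case with $m=1$, where there is nothing to prove.)

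\emph{Repeated rectangle.} Suppose $N\ge 2$ and $a_j\ge 2$ for some $j$. Write $P=\mu\cup R_{t_j}\cup R_{t_j}$, where $\mu$ is obtained from $P$ by lowering $a_j$ to $a_j-2$ (so that $R_{t_j}$ disappears from $\mu$ when $a_j=2$). Proposition \ref{Rt_twice}, applied with $\la=\mu$, gives
\[
\kks{P}=\kks{\mu\cup R_{t_j}}\cdot\frac{\kks{R_{t_j}\cup R_{t_j}}}{\kks{R_{t_j}}},
\]
a legitimate identity in $\Lk$ since $\kks{R_{t_j}}\mid\kks{R_{t_j}\cup R_{t_j}}$ by Proposition \ref{P_factor} and $\Lk$ is an integral domain. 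Now $\mu\cup R_{t_j}$ is a union of $N-1$ rectangles with the same distinct sizes $t_1,\dots,t_m$, so the induction hypothesis gives $\kks{\mu\cup R_{t_j}}=\bigl(\prod_{i\ne j}\kks{R_{t_i}^{a_i}}\bigr)\kks{R_{t_j}^{a_j-1}}$. Applying Proposition \ref{Rt_twice} once more, now with $\la=R_{t_j}^{a_j-2}$, gives the single-size relation $\kks{R_{t_j}^{a_j}}=\kks{R_{t_j}^{a_j-1}}\cdot\kks{R_{t_j}\cup R_{t_j}}/\kks{R_{t_j}}$; substituting both of these into the displayed equation yields $\kks{P}=\prod_{i=1}^{m}\kks{R_{t_i}^{a_i}}$.

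\emph{All multiplicities one.} Suppose $a_1=\dots=a_m=1$; for $m=1$ there is nothing to do, so take $m\ge 2$, put $P=R_{t_1}\cup\dots\cup R_{t_{m-1}}$ and $\la=R_{t_m}$. I would check that $\la$ fits the framework \Nla of Theorem \ref{P_mu_r}: here $\bla=(t_m^{\,k-t_m})$, which is contained in $\Rbl=R_{t_m+1}$ (degenerating to $\bla=\emptyset=\Rbl$ when $t_m=k$); and with $r:=\la_{l(\la)}=t_m$ one has $\maxti=t_{m-1}<t_m=\bla_\bl$ (reading $\bla_0=\infty$ when $\bl=0$), so the hypothesis of Theorem \ref{P_mu_r} holds. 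Since moreover $\al{r}=0$, we land in the ``particular case'' of that theorem, which gives $\kks{P\cup\la}=\kks{P}\kks{\la}$, i.e.
\[
\kks{R_{t_1}\cup\dots\cup R_{t_m}}=\kks{R_{t_1}\cup\dots\cup R_{t_{m-1}}}\cdot\kks{R_{t_m}};
\]
combining with the induction hypothesis $\kks{R_{t_1}\cup\dots\cup R_{t_{m-1}}}=\prod_{i<m}\kks{R_{t_i}}$ closes the induction.

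\emph{Main obstacle.} Essentially all of the substance has been absorbed into Theorem \ref{P_mu_r}, so the argument above is mostly bookkeeping. The two places that need care are: verifying that $\la=R_{t_m}$ really satisfies \Nla with $\maxti<\bla_\bl$ (especially the boundary case $t_m=k$, where $\la=(k)$, $\bla=\emptyset$, and one relies on the $\bl=0$ instance of Theorem \ref{P_mu_r} recorded in its proof), and checking that each quotient $\kks{R_t\cup R_t}/\kks{R_t}$ written above is genuinely an element of $\Lk$ — which is exactly what Proposition \ref{P_factor} supplies.
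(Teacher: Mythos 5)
Your proof is correct and follows essentially the same route as the paper: both arguments rest on Proposition \ref{Rt_twice} to strip off repeated copies of a $k$-rectangle and on the special case $\al{r}=0$ of Theorem \ref{P_mu_r} (via $\la=R_{t_m}$) to split off the largest distinct rectangle. The only difference is bookkeeping — you induct on the total count $\sum_i a_i$ with a case split, while the paper inducts on $m$ and iterates Proposition \ref{Rt_twice} over $i=0,\dots,a_m-2$ — and your checks of \Nla{} and of the divisibility of the quotients match what the paper implicitly relies on.
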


\begin{proof}
Use induction on $m>0$.

The base case $m=1$ is obvious.
Assume $m>1$.

Applying Proposition \ref{Rt_twice} for 
$\la=R_{t_m}^{i}$ and $t=t_m$, we have
\[ \kks{R_{t_m}^{i+2}} = \kks{R_{t_m}^{i+1}} \frac{\kks{R_{t_m}\cup R_{t_m} }}{\kks{R_{t_m}}}. \]
Multiplying this for $i=0,\dots,a_m-2$, we have
\begin{equation}\label{eq:Rta}
\kks{R_{t_m}^{a_m}} = \kks{R_{t_m}} \left(\frac{\kks{R_{t_m}\cup R_{t_m} }}{\kks{R_{t_m}}}\right)^{a_m-1}. 
\end{equation}

Put $P = R_{t_1}^{a_1}\cup\cdots\cup R_{t_{m-1}}^{a_{m-1}}$.

Similarly applying Proposition \ref{Rt_twice} for 
$\la=P\cup R_{t_m}^{i}$ and $t=t_m$, 
then multiplying this for $i=0,\dots,a_m-2$, we have
\[ \kks{P\cup R_{t_m}^{a_m}} = \kks{P\cup R_{t_m}} \left(\frac{\kks{R_{t_m}\cup R_{t_m} }}{\kks{R_{t_m}}}\right)^{a_m-1}. \]

On the other hand, applying the previous theorem for $P$, $\la = R_{t_m}$, 
we have $\kks{P \cup R_{t_m}} = \kks{P} \kks{R_{t_m}}$.

Hence we have 
$$ \kks{P\cup R_{t_m}^{a_m}}
= \kks{P} \kks{R_{t_m}} \left(\frac{\kks{R_{t_m}\cup R_{t_m} }}{\kks{R_{t_m}}}\right)^{a_m-1}
= \kks{P} \kks{R_{t_m}^{a_m}} 
= \kks{R_{t_1}^{a_1}} \dots \kks{R_{t_m}^{a_m}},$$
where the last equality follows by the induction hypothesis.
\end{proof}

\section{Discussions}\label{sec:discuss}

In this section we state some conjectures, that are consistent with our results in previous sections.

\begin{conj}
	For all $\la \in \Pk$ and $P=\RRma$, write
	$$\kks{P\cup\la} = \kks{P} \sum_{\mu} a_{P,\la,\mu} \kks{\mu}.$$
	Then $a_{P,\la,\mu}\ge 0$ for any $\mu$.
\end{conj}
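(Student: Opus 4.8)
\smallskip
\noindent\emph{Proof strategy.}
The plan is to combine the reductions already available with an induction on $\la$, and then to isolate the one genuinely hard point. By Theorem~\ref{Result_two} the coefficients $a_{P,\la,\mu}$ depend only on the \emph{set} of rectangle sizes occurring in $P$, so we may assume $P=R_{t_1}\cup\cdots\cup R_{t_m}$ with $t_1<\cdots<t_m$. Write $b_{P,\mu}:=\kks{P\cup\mu}/\kks{P}$, which lies in $\Lk$ by Proposition~\ref{P_factor}; the assertion is that each $b_{P,\la}$ is a nonnegative $\Z$-combination of the $\kks{\mu}$. I would induct on $\la$ with respect to the order used in the proof of Proposition~\ref{P_factor}, namely $\mu\le\la$ iff $|\mu|<|\la|$, or $|\mu|=|\la|$ and $\mu\unrhd\la$. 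The cases $\la=\emptyset$ and $l(\la)\le 1$ are exactly Theorem~\ref{R_t_r}, and more generally Theorems~\ref{R_t_r} and~\ref{P_mu_r} already yield the positivity — with explicit nonnegative binomial coefficients — whenever $\bla=(\la_1,\dots,\la_{l(\la)-1})$ fits inside a $k$-rectangle and $\la_{l(\la)-1}>\maxti$.

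For the inductive step, write $r=\la_{l(\la)}$, apply the Pieri rule (Proposition~\ref{Pieri}) to $\kks{P\cup\bla}$ and multiply by $h_r$. By Corollary~\ref{weaks_rect_P} every core occurring is of the form $\core(P\cup\mu)$, and since $\kks{P\cup\bla}\,h_r=\kks{P\cup\la}+\bigl(\text{terms }\kks{P\cup\mu}\text{ with }\mu<\la\bigr)$, dividing by $\kks{P}$ gives
\[
	b_{P,\la}=h_r\cdot b_{P,\bla}-\sum_{\mu<\la}c_{\la\mu}\,b_{P,\mu}
\]
for suitable integers $c_{\la\mu}$. By the inductive hypothesis $b_{P,\bla}$ and every $b_{P,\mu}$ with $\mu<\la$ is a nonnegative combination of $K$-$k$-Schur functions. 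The difficulty is that neither operation on the right is manifestly positive: expanding $h_r\cdot\kks{\nu}$ through~(\ref{eq:Pieri}) introduces alternating signs, and the terms $c_{\la\mu}b_{P,\mu}$ are subtracted. So the conjecture is really a \emph{cancellation} statement, and the main obstacle is to produce that cancellation — to show that, once everything is re-expanded in the $K$-$k$-Schur basis, every resulting coefficient is $\ge 0$.

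There seem to be two ways to supply the missing cancellation, and I would try the first. One is to find a cancellation-free model: conjecture, and prove, a manifestly nonnegative combinatorial formula for $b_{P,\la}$ — a signless weighted count of affine set-valued tableaux, or of saturated chains in the weak order — specializing to the binomial sums of Theorems~\ref{R_t_r} and~\ref{P_mu_r}, and then verify that such a formula satisfies the recursion above, the signs cancelling for structural reasons. A related algebraic reduction is available: using the factorization $\kks{R_t^a}=\kks{R_t}\bigl(\sum_{\nu\subset R_t}\kks{\nu}\bigr)^{a-1}$ of~\cite{Takigiku} one can absorb into $P$ every maximal block of $\la$ that is itself a $k$-rectangle, reducing to the case that no part of $\la$ repeats often enough to form a $k$-rectangle; but this absorption multiplies by ratios $\kks{P'}/\kks{P}$ which are themselves only \emph{signed} in the $K$-$k$-Schur basis, so it too needs care. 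The second route is geometric: the $K$-$k$-Schur functions are characterized as the Schubert basis of the $K$-homology of the affine Grassmannian~\cite{MR2660675}, and $\kks{P}$ corresponds, modulo length-zero elements, to a translation by an antidominant coweight; realizing ``division by $\kks{P}$'' as a pullback or a coaction and invoking positivity results for $K$-theoretic Schubert calculus (in the spirit of Brion and of Anderson--Griffeth--Miller) would settle this conjecture and, one may hope, also the sharper form — $0/1$ coefficients supported on a strong-order interval when $P=R_t$ — recorded in Conjecture~\ref{conj:interval}.
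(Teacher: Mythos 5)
There is a genuine gap here, and it is worth being clear about its nature: the statement you are asked to prove is stated in the paper only as a \emph{conjecture} (in Section \ref{sec:discuss}), and the paper itself offers no proof of it. Your proposal does not close the gap either, and you say so yourself. The reductions you invoke are all sound — Theorem \ref{Result_two} does let one discard multiplicities in $P$, Proposition \ref{P_factor} does guarantee that $b_{P,\la}=\kks{P\cup\la}/\kks{P}$ lies in $\Lk$, and Theorems \ref{R_t_r} and \ref{P_mu_r} do establish positivity (with explicit nonnegative binomial coefficients) in the special cases where $\bla$ fits in a $k$-rectangle and $\la_{l(\la)-1}>\maxti$. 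But the inductive step is exactly where the conjecture lives, and your recursion
\[
	b_{P,\la}=h_r\cdot b_{P,\bla}-\sum_{\mu<\la}c_{\la\mu}\,b_{P,\mu}
\]
cannot be pushed through as written: the coefficients $c_{\la\mu}$ coming from the affine set-valued Pieri rule carry the signs $(-1)^{|\la|+r-|\mu|}$ of Definition \ref{KkSchur_def}, the re-expansion of $h_r\cdot\kks{\nu}$ via (\ref{eq:Pieri}) is itself alternating, and nonnegativity of each $b_{P,\mu}$ in the $K$-$k$-Schur basis is not preserved under either operation. Identifying the needed cancellation is not the same as producing it.

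The two escape routes you sketch (a signless combinatorial model for $b_{P,\la}$, or a geometric positivity argument via the $K$-homology Schubert basis of \cite{MR2660675}) are reasonable directions — indeed the paper's own closing remarks point toward the geometric interpretation as the interesting open question — but neither is carried out, and the first would in any case require you to first \emph{guess} the cancellation-free formula before verifying it against the recursion, which is precisely the content of the conjecture. In short: your write-up is an honest and accurate map of the problem, consistent with everything the paper proves, but it is a research plan rather than a proof, and the statement remains open both in the paper and in your proposal.
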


In the case $P=R_t$, it is observed that $a_{R_t,\la,\mu}=0$ or $1$.
Moreover, the set of $\mu$ such that $a_{R_t,\la,\mu}=1$ is expected to be an ``interval'',
but we have to consider the \textit{strong order} on $\Pk\simeq\Cn\simeq\tSn$,
which can be seen as just inclusion as shapes in the poset of cores,
or strong Bruhat order on the affine symmetric group.
Namely, the strong order $\la\le\mu$ on $\Pk$ is defined by $\core(\la)\subset\core(\mu)$.
Notice that $\la\preceq\mu \Lra \la\subset\mu \Lra \la\le\mu$ for $\la,\mu\in\Pk$.
Then,

\begin{conj}\label{conj:interval}
	For all $\la \in \Pk$ and $1\le t \le k$, 
	there exists $\mu \in \Pk$ such that
	\[
		\kks{R_t\cup \la} = \kks{R_t} \sum_{\mu\le\nu\le\la} \kks{\nu}.
	\]
\end{conj}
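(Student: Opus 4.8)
The plan is to prove the identity by induction on $\la$, with respect to the order used in Proposition~\ref{P_factor} ($\mu\le\la$ iff $|\mu|<|\la|$, or $|\mu|=|\la|$ and $\mu\rhd\la$), carrying along at each stage not merely the quotient but the full \emph{set} of $K$-$k$-Schur functions that occur in it. Set $q_\la:=\kks{R_t\cup\la}/\kks{R_t}$; this lies in $\Lk$ by Proposition~\ref{P_factor}, and by the Corollary following it $q_\la=\kks{\la}+\sum_{|\mu|<|\la|}a_{\la\mu}\kks{\mu}$. The goal is to produce $\mu_0=\mu_0(t,\la)\in\Pk$ with $q_\la=\sum_{\mu_0\le\nu\le\la}\kks{\nu}$ in the strong order — which would in particular force every occurring $\mu\ne\la$ to satisfy $\mu<\la$ strongly and every coefficient to be $0$ or $1$. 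The base cases $\la=(r)$ are Theorem~\ref{R_t_r}, where $\mu_0=\emptyset$ if $r\le t$ and $\mu_0=(r)$ if $r>t$; the instances settled by Theorem~\ref{P_mu_r} (namely $\la_{l(\la)-1}>t$ with $(\la_1,\dots,\la_{l(\la)-1})$ inside a $k$-rectangle) already indicate the expected combinatorial description of $\mu_0$. One may moreover reduce to the case $R_t\not\subset\la$: if $\la=R_t\cup\eta$ then, by the $\Theta$-argument behind Proposition~\ref{Rt_twice}, $q_\la=\bigl(\sum_{\nu\subset R_t}\kks{\nu}\bigr)q_\eta$, so it would remain to show that a product of interval sums of this kind is again an interval sum.

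For the inductive step write $\la=\hat\la\cup(r)$, $r=\la_{l(\la)}$. The recursion comes from applying $h_r\cdot$ to $\kks{R_t\cup\hat\la}$: by Corollary~\ref{weaks_rect_P} every term is $\kks{R_t\cup\nu}$ with $\core(\nu)/\core(\hat\la)$ a weak strip, so after dividing by $\kks{R_t}$ we get $h_r q_{\hat\la}$ as a signed sum of $q_\nu$'s whose coefficients are binomials in $r_{\core(R_t\cup\nu)\,\core(R_t\cup\hat\la)}$. On the other hand, feeding in the induction hypothesis $q_{\hat\la}=\sum_{\nu\in[\mu_0(\hat\la),\hat\la]}\kks{\nu}$ and expanding $h_r q_{\hat\la}=\sum_\nu h_r\kks{\nu}$ by the ordinary Pieri rule~(\ref{eq:Pieri}), then equating the two expansions, substituting the induction hypothesis for every $q_\mu$ with $\mu<\la$, and solving the resulting (essentially unitriangular) linear system for the leading unknown $q_\la$ — exactly the engine of the proofs of Theorems~\ref{R_t_r} and~\ref{P_mu_r}, with Lemmas~\ref{binom_inv_matrix} and~\ref{binom_lem1} doing the binomial bookkeeping — yields $q_\la$ as an explicit $\Z$-combination of $\kks{\nu}$'s. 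The decisive quantitative input is Corollary~\ref{theo:r_Rt}: the discrepancy $r_{\core(R_t\cup\nu)\,\core(R_t\cup\hat\la)}-r_{\core(\nu)\,\core(\hat\la)}$ is $0$ or $1$, and one needs the precise description of which row carries the $+1$, to be extracted from the position of the lowest removable corner of $\core(R_t\cup\hat\la)$ relative to the $R_t$-block, generalizing the analyses of Lemmas~\ref{p=p} and~\ref{r_henkei}.

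The main obstacle is the concluding combinatorial step: pinning down $\mu_0(\la)$ in terms of $\mu_0(\hat\la)$, $r$ and $t$, and proving that the $\Z$-combination produced by the linear-algebra step is again the indicator sum of a strong-order interval. Two difficulties compound here. First, unlike in Theorem~\ref{P_mu_r}, $\hat\la$ need not sit inside a $k$-rectangle, so the clean classification of weak strips in Lemma~\ref{ws_in_R} is unavailable and one must handle arbitrary weak strips $\core(\nu)/\core(\hat\la)$. Second, and more essentially, I see no purely formal reason why the interaction of the interval structure of $q_{\hat\la}$, the Pieri rule, and the $0/1$ corrections of Corollary~\ref{theo:r_Rt} should preserve both the $0/1$ property and convexity in the strong Bruhat order; making this precise will likely demand a careful case analysis on the corners of $\core(R_t\cup\hat\la)$, together with an order-theoretic lemma to the effect that an interval, acted on by a horizontal-strip Pieri move with a single-residue correction, is again an interval. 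A possible fallback is to establish $a_{R_t,\la,\mu}\in\{0,1\}$ first by an independent positivity argument — e.g.\ through the $K$-homology model of~\cite{MR2660675} — and only afterwards upgrade to the interval statement; but neither half of that route appears easier than the direct approach.
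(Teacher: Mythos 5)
The statement you are addressing is Conjecture \ref{conj:interval}: the paper itself offers no proof, only numerical evidence (the tables in Appendix \ref{sec:example}) and the heuristic discussion surrounding it in Section \ref{sec:discuss}. So there is no argument in the paper to compare yours against, and the only question is whether your proposal settles the conjecture. It does not, and you say so yourself: the inductive step requires showing that the $\Z$-combination produced by solving the Pieri system is again the indicator sum of a strong-order interval with $0/1$ coefficients, and that is precisely the content of the conjecture. An induction whose step assumes the structural property it is trying to propagate --- without a lemma establishing that a horizontal-strip Pieri move combined with the $0/1$ correction of Corollary \ref{theo:r_Rt} preserves both the $0/1$-coefficient property and convexity in the strong order --- is a restatement of the problem in inductive form, not a proof. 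Your closing paragraph identifies this as ``the main obstacle'', which is accurate.

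Two further points. First, the proposed reduction to the case $R_t\not\subset\la$ via $q_{R_t\cup\eta}=\bigl(\sum_{\nu\subset R_t}\kks{\nu}\bigr)q_\eta$ leans on the identity $\kks{R_t\cup R_t}/\kks{R_t}=\sum_{\nu\subset R_t}\kks{\nu}$, which this paper only announces and defers to the sequel \cite{Takigiku}; and even granting it, the claim that a product of two interval sums is again an interval sum is itself one of the open assertions of Section \ref{sec:discuss} (the statement about the order filters $I_{\mu,t}$), so this reduction trades one conjecture for another. Second, your identification of the base cases and of the instances already settled by Theorems \ref{R_t_r} and \ref{P_mu_r} is correct, and the general scaffolding (induction along the order of Proposition \ref{P_factor}, the Pieri recursion via Corollary \ref{weaks_rect_P}, the binomial inversion of Lemma \ref{binom_inv_matrix}) mirrors the techniques the paper does use for its proven special cases. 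But the decisive combinatorial step is missing, which is exactly why the statement appears in the paper as a conjecture rather than a theorem. As written, your proposal should be read as a plausible research plan, not a proof.
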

	Assuming this conjecture, we shall write
	$\minindex(\la,t) = \mu$.

We can make some conjectures about the behavior of minindex:
\begin{itemize}
\item
	It is expected that if $\la$ gets ``bigger'' with respect to inclusion, then minindex gets weakly bigger in the strong order.
	Namely,
	
\noindent\textit{
	For any two elements $\mu \subset\la$ of $\Pk$, we have
	$\minindex(\mu,t) \le \minindex(\la,t)$.
}
\item
	If a bounded partition has a form $R_s\cup\la$ for $s\neq t$, 
	its minindex still be expected to contains $R_s$,
	and the ``remaining part'' is bigger or equal to $\minindex(\la,t)$ in the strong order:

\noindent\textit{
	For all $\la\in\Pk$ and $1\le s \neq t \le k$, 
	$\minindex(R_s\cup\la,t)$ has the form $R_s\cup\mu$ and
	$\minindex(\la,t) \le \mu$.
}
\item
	If a bounded partition has a form $R_s\cup R_s\cup\la$ for $s\neq t$, 
	its minindex would be equal to the union of $R_s$ and $\minindex(R_s\cup\la)$:

\noindent\textit{
	For all $\la\in\Pk$ and $1\le s\neq t\le k$, we have
	$R_s\cup\minindex(R_s\cup\la,t) = \minindex(R_s\cup R_s \cup \la, t)$.
}
\end{itemize}

\vspace{2mm}
Next, consider a bounded partition that has a form $R_t\cup\la$.
We wrote
\[
\frac{\kks{R_t\cup R_t\cup \la}}{\kks{R_t}} = \sum_{\minindex(R_t\cup\la,t) \le \ga \le R_t\cup\la} \kks{\ga}.
\]
On the other hand, by Proposition \ref{Rt_twice}
\begin{align*}
\frac{\kks{R_t\cup R_t\cup\la}}{\krt}
&= \frac{\krtl}{\krt} \frac{\kks{R_t\cup R_t}}{\krt} 
= \sum_{\minindex(\la,t)\le\mu\le\la} \kks{\mu} \sum_{\nu\subset R_t} \kks{\nu}.
\end{align*}
Now we can expect that 
for any $\mu\in\Pk$, 
\[
\kks{\mu} \sum_{\nu\subset R_t} \kks{\nu} = \sum_{\ga\in I_{\mu,t}} \kks{\ga},
\]
where $I_{\mu,t}$ is an order filter of the interval $[\emptyset, R_t\cup\mu]$ (in $\Pk$ with the strong order)
such that 
\[
\bigsqcup_{\mu\in[\minindex(\la,t),\la]} I_{\mu,t} = [\minindex(R_t\cup\la,t), R_t\cup\la].
\]

\appendix

\section{Examples} \label{sec:example}

In this section we sometimes abbreviate $\sum_{\la} a_{\la} \kks[3]{\la}$ as $\sum_{\la} a_{\la} \la$
for ease to see.

\vspace{2mm}

\begin{table}[H]
	\caption{$k=3$. The table of ${\kks[3]{Q\cup\la}}/{\kks[3]{Q}}$
		for $Q=R_{t_1}\cup\dots\cup R_{t_n}$ ($1\le t_1<\dots<t_n\le k$),
		$\la\subset (1^2 2^1 3^0)$
	}
	\label{table:Q_la}
	\centering
\scalebox{0.8}{
\begin{tabular}{|c||c|c|c|c|c|} \hline
	\backslashbox{$Q$}{$\la$}&
	$\syou{1}$ &
	$\syou{1,1}$ &
	$\syou{2}$ &
	$\syou{1,2}$ &
		$\syou{1,1,2}$
	\\ \hline \hline
	$\syou{3}$ &
	$\syou{1}+\emptyset$ &
	$\syou{1,1}$ &
	$\syou{2}+\syou{1}+\emptyset$ &
	$\syou{1,2}+\syou{2}$ &
	\begin{minipage}{4cm}
		\vspace{1mm}
	$\syou{1,1,2}+\syou{1,1,1}+\syou{1,2}+\syou{1,1}$
		\vspace{1mm}
	\end{minipage}
	\\ \hline
	$\syou{2,2}$ &
	$\syou{1}+\emptyset$ &
	$\syou{1,1}+\syou{1}+\emptyset$ &
	$\syou{2}+\syou{1}+\emptyset$ &
	$\syou{1,2}+\syou{2}+\syou{1,1}+\syou{1}+\emptyset$ &
	\begin{minipage}{4cm}
		\vspace{1mm}
		$\syou{1,1,2}+\syou{1,1,1}+\syou{1,2}+\syou{3}+\syou{1,1}+\syou{2}+\syou{1}+\emptyset$
		\vspace{1mm}
	\end{minipage}
	\\ \hline
	\raisebox{-1ex}{$\syou{1,1,1}$} &
	$\syou{1}+\emptyset$ &
	$\syou{1,1}+\syou{1}+\emptyset$ &
	$\syou{2}$ &
	$\syou{1,2}+\syou{1,1}$ &
	\begin{minipage}{4cm}\vspace{1mm}
	$\syou{1,1,2}+\syou{1,2}+\syou{3}+\syou{2}$
	\vspace{1mm}
	\end{minipage}
	\\ \hline
	\raisebox{-1ex}{$\syou{2,2,3}$} &
	$\syou{1}+2\emptyset$ &
	$\syou{1,1}+\syou{1}+\emptyset$ &
	$\syou{2}+2\syou{1}+3\emptyset$ &
	$\syou{1,2}+\syou{2}+2\syou{1,1}+2\syou{1}+2\emptyset$ &
	\begin{minipage}{4cm}\vspace{1mm}
		$\syou{1,1,2}+2\syou{1,1,1}+2\syou{1,2}+\syou{3}+3\syou{1,1}+2\syou{2}+3\syou{1}+3\emptyset$\vspace{1mm}
	\end{minipage}
	\\ \hline
	\raisebox{-2ex}{$\syou{1,1,1,3}$} &
	$\syou{1}+2\emptyset$ &
	$\syou{1,1}+\syou{1}+\emptyset$ &
	$\syou{2}+\syou{1}+\emptyset$ &
	$\syou{1,2}+\syou{2}+\syou{1,1}+\syou{1}+\emptyset$ &
	\begin{minipage}{4cm}\vspace{1mm}
		$\syou{1,1,2}+2\syou{1,1,1}+2\syou{1,2}+\syou{3}+2\syou{1,1}+2\syou{2}+2\syou{1}+2\emptyset$\vspace{1mm}
	\end{minipage}
	\\ \hline
	\raisebox{-2ex}{$\syou{1,1,1,2,2}$} &
	$\syou{1}+2\emptyset$ &
	$\syou{1,1}+2\syou{1}+3\emptyset$ &
	$\syou{2}+\syou{1}+\emptyset$ &
	$\syou{1,2}+2\syou{2}+\syou{1,1}+2\syou{1}+2\emptyset$ &
	\begin{minipage}{4cm}\vspace{1mm}
		$\syou{1,1,2}+\syou{1,1,1}+2\syou{1,2}+2\syou{3}+2\syou{1,1}+2\syou{2}+3\syou{1}+3\emptyset$\vspace{1mm}
	\end{minipage}
	\\ \hline
	\raisebox{-3ex}{$\syou{1,1,1,2,2,3}$} &
	$\syou{1}+3\emptyset$ &
	$\syou{1,1}+2\syou{1}+3\emptyset$ &
	$\syou{2}+2\syou{1}+3\emptyset$ &
	$\syou{1,2}+2\syou{2}+2\syou{1,1}+4\syou{1}+5\emptyset$ &
	\begin{minipage}{4cm}\vspace{2mm}
		$\syou{1,1,2}+2\syou{1,1,1}+3\syou{1,2}+2\syou{3}+5\syou{1,1}+5\syou{2}+8\syou{1}+9\emptyset$\vspace{2mm}
	\end{minipage}
	\\ \hline
\end{tabular}
}
\end{table}

\begin{table}[H]
	\label{table:R_three_la}
	\caption{$k=3$. The table of $\minindex(\la,t)$ for $|\la|\le 6$.}
	\centering

	\begin{tabular}{cccccccc} \hline
	$\la$ & $\core(\la)$ & \multicolumn{6}{c}{$\minindex$, $\core(\minindex)$}\\ \cline{3-8}
		&	& \multicolumn{2}{c}{$t=1$} & \multicolumn{2}{c}{$t=2$} & \multicolumn{2}{c}{$t=3$} \\ \hline
	$\emptyset$ & $\emptyset$ & $\emptyset$ & $\emptyset$ & $\emptyset$ & $\emptyset$ & $\emptyset$ & $\emptyset$ \\
	$\syou{1}$ & $\syou{1}$ & $\emptyset$ & $\emptyset$ & $\emptyset$ & $\emptyset$ & $\emptyset$ & $\emptyset$ \\
	$\syou{2}$ & $\syou{2}$ & $\syou{2}$ & $\syou{2}$ & $\emptyset$ & $\emptyset$ & $\emptyset$ & $\emptyset$ \\
	$\syou{1,1}$ & $\syou{1,1}$ & $\emptyset$ & $\emptyset$ & $\emptyset$ & $\emptyset$ & $\syou{1,1}$ & $\syou{1,1}$ \\
	$\syou{3}$ & $\syou{3}$ & $\syou{3}$ & $\syou{3}$ & $\syou{3}$ & $\syou{3}$ & $\emptyset$ & $\emptyset$ \\
	$\syou{1,2}$ & $\syou{1,2}$ & $\syou{2}$ & $\syou{2}$ & $\emptyset$ & $\emptyset$ & $\syou{2}$ & $\syou{2}$ \\
	$\syou{1,1,1}$ & $\syou{1,1,1}$ & $\emptyset$ & $\emptyset$ & $\syou{1,1,1}$ & $\syou{1,1,1}$ & $\syou{1,1,1}$ & $\syou{1,1,1}$ \\
	$\syou{1,3}$ & $\syou{1,4}$ & $\syou{3}$ & $\syou{3}$ & $\syou{3}$ & $\syou{3}$ & $\emptyset$ & $\emptyset$ \\
	$\syou{2,2}$ & $\syou{2,2}$ & $\syou{2,2}$ & $\syou{2,2}$ & $\emptyset$ & $\emptyset$ & $\syou{2,2}$ & $\syou{2,2}$ \\
	$\syou{1,1,2}$ & $\syou{1,1,3}$ & $\syou{2}$ & $\syou{2}$ & $\emptyset$ & $\emptyset$ & $\syou{1,1}$ & $\syou{1,1}$ \\
	$\syou{1,1,1,1}$ & $\syou{1,1,1,2}$ & $\emptyset$ & $\emptyset$ & $\syou{1,1,1}$ & $\syou{1,1,1}$ & $\syou{1,1,1}$ & $\syou{1,1,1}$ \\ 
	$\syou{2,3}$ & $\syou{2,5}$ & $\syou{2,3}$ & $\syou{2,5}$ & $\syou{3}$ & $\syou{3}$ & $\emptyset$ & $\emptyset$ \\
	$\syou{1,1,3}$ & $\syou{1,1,4}$ & $\syou{1,3}$ & $\syou{1,4}$ & $\syou{1,3}$ & $\syou{1,4}$ & $\syou{1,1,1}$ & $\syou{1,1,1}$ \\
	$\syou{1,2,2}$ & $\syou{1,2,3}$ & $\syou{2,2}$ & $\syou{2,2}$ & $\emptyset$ & $\emptyset$ & $\syou{2,2}$ & $\syou{2,2}$ \\
	$\syou{1,1,1,2}$ & $\syou{1,1,1,3}$ & $\syou{3}$ & $\syou{3}$ & $\syou{1,1,1,1}$ & $\syou{1,1,1,2}$ & $\syou{1,1,1,1}$ & $\syou{1,1,1,2}$ \\
	$\syou{1,1,1,1,1}$ & $\syou{1,1,1,2,2}$ & $\emptyset$ & $\emptyset$ & $\syou{1,1,1}$ & $\syou{1,1,1}$ & $\syou{1,1,1,1,1}$ & $\syou{1,1,1,2,2}$ \\
	$\syou{3,3}$ & $\syou{3,6}$ & $\syou{3,3}$ & $\syou{3,6}$ & $\syou{3,3}$ & $\syou{3,6}$ & $\emptyset$ & $\emptyset$ \\
	$\syou{1,2,3}$ & $\syou{1,2,5}$ & $\syou{2,3}$ & $\syou{2,5}$ & $\syou{1,3}$ & $\syou{1,4}$ & $\syou{1,1,1}$ & $\syou{1,1,1}$ \\
	$\syou{2,2,2}$ & $\syou{2,2,4}$ & $\syou{2,2,2}$ & $\syou{2,2,4}$ & $\emptyset$ & $\emptyset$ & $\syou{2,2}$ & $\syou{2,2}$ \\
	$\syou{1,1,1,3}$ & $\syou{1,1,1,4}$ & $\syou{1,3}$ & $\syou{1,4}$ & $\syou{1,1,1,3}$ & $\syou{1,1,1,4}$ & $\syou{1,1,1,1}$ & $\syou{1,1,1,2}$ \\
	$\syou{1,1,2,2}$ & $\syou{1,1,3,3}$ & $\syou{2,2}$ & $\syou{2,2}$ & $\emptyset$ & $\emptyset$ & $\syou{1,1,2,2}$ & $\syou{1,1,3,3}$ \\
	$\syou{1,1,1,1,2}$ & $\syou{1,1,1,2,3}$ & $\syou{3}$ & $\syou{3}$ & $\syou{1,1,1,1}$ & $\syou{1,1,1,2}$ & $\syou{1,1,1,1,1}$ & $\syou{1,1,1,2,2}$ \\
	$\syou{1,1,1,1,1,1}$ & $\syou{1,1,1,2,2,2}$ & $\emptyset$ & $\emptyset$ & $\syou{1,1,1,1,1,1}$ & $\syou{1,1,1,2,2,2}$ & $\syou{1,1,1,1,1,1}$ & $\syou{1,1,1,2,2,2}$ \\
	\hline
	\end{tabular}
\end{table}

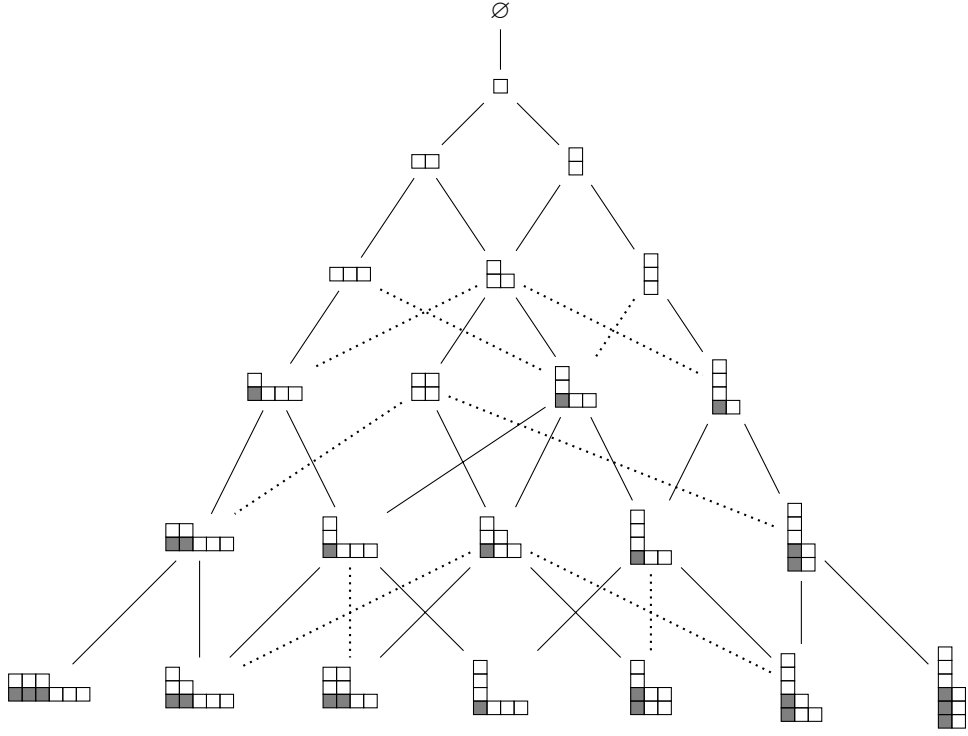
\begin{figure}[H]
\begin{tikzpicture}[scale=1]
	\node (0) at (0,0) {$\emptyset$};
	\node (1) at (0,-1) {$\sk{1}{}$};
	\node (2) at (-1,-2) {$\sk{2}{}$};
	\node (11) at (1,-2) {$\sk{1,1}{}$};
	\node (3) at (-2,-3.5) {$\sk{3}{}$};
	\node (21) at (0,-3.5) {$\sk{2,1}{}$};
	\node (111) at (2,-3.5) {$\sk{1,1,1}{}$};
	\node (31) at (-3,-5) {$\sk{4,1}{1}$};
	\node (22) at (-1,-5) {$\sk{2,2}{}$};
	\node (211) at (1,-5) {$\sk{3,1,1}{1}$};
	\node (1111) at (3,-5) {$\sk{2,1,1,1}{1}$};
	\node (32) at (-4,-7) {$\sk{5,2}{2}$};
	\node (311) at (-2,-7) {$\sk{4,1,1}{1}$};
	\node (221) at (0,-7) {$\sk{3,2,1}{1}$};
	\node (2111) at (2,-7) {$\sk{3,1,1,1}{1}$};
	\node (11111) at (4,-7) {$\sk{2,2,1,1,1}{1,1}$};
	\node (33) at (-6,-9) {$\sk{6,3}{3}$};
	\node (321) at (-4,-9) {$\sk{5,2,1}{2}$};
	\node (222) at (-2,-9) {$\sk{4,2,2}{2}$};
	\node (3111) at (0,-9) {$\sk{4,1,1,1}{1}$};
	\node (2211) at (2,-9) {$\sk{3,3,1,1}{1,1}$};
	\node (21111) at (4,-9) {$\sk{3,2,1,1,1}{1,1}$};
	\node (111111) at (6,-9) {$\sk{2,2,2,1,1,1}{1,1,1}$};
	\draw (0) -- (1)
		(1) -- (2)
		(1) -- (11)
		(2) -- (3)
		(2) -- (21)
		(11) -- (21)
		(11) -- (111)
		(3) -- (31)
		(21) -- (22)
		(21) -- (211)
		(111) -- (1111)
		(31) -- (32)
		(31) -- (311)
		(22) -- (221)
		(211) -- (311)
		(211) -- (221)
		(211) -- (2111)
		(1111) -- (2111)
		(1111) -- (11111)
		(32) -- (33)
		(32) -- (321)
		(311) -- (321)
		(311) -- (3111)
		(221) -- (222)
		(221) -- (2211)
		(2111) -- (3111)
		(2111) -- (21111)
		(11111) -- (21111)
		(11111) -- (111111);
	\draw[thick, dotted]
		(3) -- (211)
		(21) -- (31)
		(21) -- (1111)
		(111) -- (211)
		(22) -- (32)
		(22) -- (11111)
		(311) -- (222)
		(221) -- (321)
		(221) -- (21111)
		(2111) -- (2211)
		;
\end{tikzpicture}

\caption{The poset of $4$-cores (up to those of size $6$). The weak cover relations correspond to the solid lines, and the strong cover relations correspond to the solid or dotted lines.}
\label{fig:poset_core}
\end{figure}

\section{Proof of Proposition \ref{weakstrip}}\label{sec:apdx_ws}
\noindent\underline{$(4) \Lra (1)$}:
The latter condition
$\tau \wcover \exists \tau^{(1)} \wcover \dots \wcover \exists \tau^{(r)} = \ka$
is obvious.

If $\ka/\tau$ is not a horizontal strip, 
then $(a,b),(a+1,b)\in\ka/\tau$ for $\exists a,b$.
Write their residues $i=b-a$, $i-1=b-(a+1)$.
Then a $s_{i-1}$-action should be performed after a $s_i$-action.

Then the representation of (4) has the form
$\ka = \dots s_{i-1} \dots s_i \dots \tau$,
which contradicts (4).

\vspace{2mm}
\noindent\underline{$(1) \Lra (4)$}:
Assume 
$\ka = \dots s_i \dots s_{i+1} \dots \tau$,
$\ksize{\ka} = \ksize{\tau} + r$,
and $\ka/\tau$ is a horizontal strip.

Consider the moment
just before performing the action of $s_{i+1}$.
At that time
the situation around each extremal cell of residue $i+1$ is
one of the following:

(1):
\raisebox{-4ex}{
\begin{tikzpicture}[scale=0.5]
\draw (0,1) |- (1,0);
\fill (0,0) circle (2pt);
\draw [loosely dotted, thick] (0,0) -- (1.0,1.0) node [anchor=south west] {$i+1$};
\end{tikzpicture}
}
\hspace{6mm}
(2):
\raisebox{-6ex}{
\begin{tikzpicture}[scale=0.5]
\draw (0,1) -- (0,-1);
\fill (0,0) circle (2pt);
\draw [loosely dotted, thick] (0,0) -- (1.0,1.0) node [anchor=south west] {$i+1$};
\end{tikzpicture}
}
\hspace{6mm}
(3):
\raisebox{-4ex}{
\begin{tikzpicture}[scale=0.5]
\draw (-1,0) -- (1,0);
\fill (0,0) circle (2pt);
\draw [loosely dotted, thick] (0,0) -- (1.0,1.0) node [anchor=south west] {$i+1$};
\end{tikzpicture}
}
\hspace{6mm}
(4):
\raisebox{-6ex}{
\begin{tikzpicture}[scale=0.5]
\draw (-1,0) -| (0,-1);
\fill (0,0) circle (2pt);
\draw [loosely dotted, thick] (0,0) -- (1.0,1.0) node [anchor=south west] {$i+1$};
\end{tikzpicture}
}

In the case (1), furthermore it should be
\raisebox{-6ex}{
\begin{tikzpicture}[scale=0.5]
\draw (-1,1) -- (0,1) |- (1,0);
\fill (0,0) circle (2pt);
\fill (0,1) circle (2pt);
\draw [loosely dotted, thick] (0,0) -- (1.0,1.0) node [anchor=south west] {$i+1$};
\draw [loosely dotted, thick] (0,1) -- (1.0,2.0) node [anchor=south west] {$i$};
\end{tikzpicture}
}
since $\ka/\tau$ is a horizontal strip.
Besides, the case (1) should happen because the action of $s_{i+1}$ must add more than or equal to one box.

In fact the case (4) never happens since the action of $s_{i+1}$ does not remove boxes.

The case (3) is divided to 

(3-1):
\raisebox{-4ex}{
\begin{tikzpicture}[scale=0.5]
\draw (-1,1) -- (-1,0) -- (1,0);
\fill (0,0) circle (2pt);
\fill (-1,0) circle (2pt);
\draw [loosely dotted, thick] (0,0) -- (1.0,1.0) node [anchor=south west] {$i+1$};
\draw [loosely dotted, thick] (-1,0) -- (0,1) node [anchor=south west] {$i$};
\end{tikzpicture}
}
and
(3-2):
\raisebox{-4ex}{
\begin{tikzpicture}[scale=0.5]
\draw (-2,0) -- (-1,0) -- (1,0);
\fill (0,0) circle (2pt);
\fill (-1,0) circle (2pt);
\draw [loosely dotted, thick] (0,0) -- (1.0,1.0) node [anchor=south west] {$i+1$};
\draw [loosely dotted, thick] (-1,0) -- (0,1) node [anchor=south west] {$i$};
\end{tikzpicture}
}
.

The case (3-1) should happen since later the action of $s_i$ must add more than or equal to one box.

Thus we have a contradiction that there are both addable corners and removable corners of residue $i$ in this moment.

\section{Explicit description of $A_{\mu,\bla,q}$}\label{sec:apdx_A_la_q}
\begin{lemm}\label{A_la_q}
In the setting of Lemma \ref{g_mu_r_expand},
\[
A_{\mu,\bar\la,q} = 
\begin{cases}
	(-1)^{|\mu/\bar\la|} & (\text{if $\mu/\bar\la:$ vertical strip and $q=|\mu/\bar\la|+r_{\mu'\bar\la'}$}), \\
	0 & (\text{otherwise}).
\end{cases}
\]
\end{lemm}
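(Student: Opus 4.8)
The plan is to prove the closed formula by induction on $|\mu/\bar\la|$, since the recursion in Lemma \ref{g_mu_r_expand} computes $A_{\mu,\bar\la,q}$ out of the values $A_{\ka,\bar\la,q'}$ with $\bar\la\subset\ka\subsetneq\mu$, i.e.\ with $|\ka/\bar\la|<|\mu/\bar\la|$. Write $B_{\mu,q}$ for the conjectured value (so $B_{\mu,q}=(-1)^{|\mu/\bar\la|}$ when $\mu/\bar\la$ is a vertical strip and $q=|\mu/\bar\la|+r_{\mu'\bar\la'}$, and $B_{\mu,q}=0$ otherwise); we must show $A_{\mu,\bar\la,q}=B_{\mu,q}$. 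For the base case $\mu=\bar\la$ the recursion gives $A_{\bar\la,\bar\la,q}=\delta_{q,r_{\bar\la\bar\la}}$, while $B_{\bar\la,q}=\delta_{q,r_{\bar\la'\bar\la'}}$. These agree because transposition $(i,j)\mapsto(j,i)$ is a residue-negating bijection from the removable corners of $\bar\la$ onto those of $\bar\la'$, and a partition never blocks its own removable corners, so $r_{\bar\la\bar\la}=\#\Res(\{\text{rem.\ cor.\ of }\bar\la\})=\#\Res(\{\text{rem.\ cor.\ of }\bar\la'\})=r_{\bar\la'\bar\la'}$.

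For the inductive step fix $\mu\supsetneq\bar\la$. By the induction hypothesis the $\ka$ occurring in the recursion satisfy $A_{\ka,\bar\la,\cdot}=B_{\ka,\cdot}$, so it suffices to check that the $B$'s obey the same recursion, i.e.\ that
\[
\sum_{\substack{\bar\la\subset\ka\subset\mu\\ \mu/\ka:\text{ h.s.}}} B_{\ka,\,q-(r_{\mu\mu}-r_{\mu\ka})}=0\qquad\text{for every }q,
\]
where the term $\ka=\mu$ (carrying the shift $r_{\mu\mu}-r_{\mu\mu}=0$) accounts for $B_{\mu,q}$. Substituting the definition of $B_{\ka}$ and collecting all values of $q$ into a formal variable $x$, this becomes the single polynomial identity
\[
\sum_{\ka\in S}(-1)^{|\ka/\bar\la|}\,x^{E(\ka)}=0,\qquad
E(\ka):=(r_{\mu\mu}-r_{\mu\ka})+|\ka/\bar\la|+r_{\ka'\bar\la'},
\]
where $S=S(\mu,\bar\la)$ is the set of $\ka$ with $\bar\la\subset\ka\subset\mu$, $\mu/\ka$ a horizontal strip and $\ka/\bar\la$ a vertical strip; note that $E(\ka)\ge 0$ by the Remark following Lemma \ref{g_mu_r_expand}.

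I would establish this identity by exhibiting a fixed-point-free, sign-reversing, $E$-preserving involution $\iota$ on $S$. Let $i^{\ast}=\max\{\,i:\mu_i>\bar\la_i\,\}$, put $c=(i^{\ast},\bar\la_{i^{\ast}}+1)$, and set $\iota(\ka)=\ka\,\triangle\,\{c\}$ (add $c$ if $\ka_{i^{\ast}}=\bar\la_{i^{\ast}}$, delete it if $\ka_{i^{\ast}}=\bar\la_{i^{\ast}}+1$; the vertical-strip condition forces $\ka_{i^{\ast}}\in\{\bar\la_{i^{\ast}},\bar\la_{i^{\ast}}+1\}$, so this is exhaustive). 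The first task is to verify that $\iota$ maps $S$ to $S$: toggling $c$ obviously preserves the vertical-strip condition on $\ka/\bar\la$, and it preserves the horizontal-strip condition on $\mu/\ka$ because no cell of $\mu$ lies in column $\bar\la_{i^{\ast}}+1$ strictly below row $i^{\ast}$ (all rows below $i^{\ast}$ have $\mu_i=\bar\la_i\le\bar\la_{i^{\ast}}$) and because the horizontal-strip condition on $\mu/\ka$ already rules out the only obstructing configuration, namely $\ka_{i^{\ast}-1}=\bar\la_{i^{\ast}}$ with $\mu_{i^{\ast}-1}>\bar\la_{i^{\ast}}$. Then $\iota$ is manifestly an involution without fixed points and it changes $|\ka/\bar\la|$ by $\pm1$, hence flips $(-1)^{|\ka/\bar\la|}$.

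The crux — and the one genuinely fiddly step — is to show $E(\iota(\ka))=E(\ka)$; writing $\ka^{+}=\ka\cup\{c\}$ this is the local identity
\[
\bigl(r_{\mu\ka^{+}}-r_{\mu\ka}\bigr)-\bigl(r_{(\ka^{+})'\bar\la'}-r_{\ka'\bar\la'}\bigr)=1 .
\]
Here one analyzes, in the spirit of the box-by-box arguments behind Lemma \ref{p=p}, Corollary \ref{theo:r_Rt} and Lemma \ref{r_henkei}, (a) how adjoining the single box $c$ changes the set of $\mu$-nonblocked removable corners of $\ka$ — only corners in rows $i^{\ast}-1,i^{\ast}$ and columns $\bar\la_{i^{\ast}},\bar\la_{i^{\ast}}+1$ are affected, and $c$ itself becomes a new $\mu$-nonblocked removable corner of $\ka^{+}$ — and (b), after transposing, how it changes the set of $(\ka^{+})'$-nonblocked removable corners of the \emph{fixed} partition $\bar\la'$, where only the corner $(\bar\la_{i^{\ast}},i^{\ast})$ (if it exists) can become newly blocked. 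Running through the handful of local configurations (controlled by whether $\bar\la_{i^{\ast}}=0$, whether $\bar\la_{i^{\ast}+1}=\bar\la_{i^{\ast}}$, and which residues are already represented among the relevant corners) one finds that exactly one of the two parenthesized quantities equals $1$ while the other equals $0$, or one equals $0$ and the other equals $-1$, so that their difference is always $1$. Pairing each $\ka\in S$ with $\iota(\ka)$ then kills the polynomial sum, closing the induction; degenerate situations such as $S=\emptyset$ (which occurs exactly when $\mu/\bar\la$ contains a $2\times2$ block) are vacuous, and when $\mu/\bar\la$ \emph{is} a vertical strip the surviving value is forced to be $(-1)^{|\mu/\bar\la|}$ at $q=|\mu/\bar\la|+r_{\mu'\bar\la'}$, as claimed. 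The only real work, beyond these routine but careful residue counts, is none.
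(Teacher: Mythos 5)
Your proof is correct, and at its core it rests on the same cancellation as the paper's: the cell you toggle, $c=(i^{\ast},\bar\la_{i^{\ast}}+1)$ with $i^{\ast}=\max\{i:\mu_i>\bar\la_i\}$, is exactly the topmost cell of the leftmost column $x_1$ of $\mu/\bar\la$, and the paper's pairing of $\ka(T)$ with $\ka(\{1\}\cup T)$ toggles precisely that cell. The difference is in the packaging, and yours is genuinely cleaner on two counts. First, by moving $B_{\mu,q}$ to the same side and summing over all $\ka$ with $\bar\la\subset\ka\subset\mu$ (including $\ka=\mu$), your involution becomes fixed-point-free, so nothing survives and you avoid the paper's separate evaluation of the leftover term $f_{\ka(\{1\})}(t)$; the paper excludes $T=\emptyset$ from its pairing and must then compute the surviving summand explicitly from the closed formula. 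Second, your argument treats the vertical-strip and non-vertical-strip cases uniformly (the involution never uses that $\mu/\bar\la$ is a vertical strip, only properties of row $i^{\ast}$), whereas the paper runs the parametrization $\ka(c_1,\dots,c_s)$, $\ka(S)$ twice with different bookkeeping. The price you pay is that the exponent-preservation $E(\iota(\ka))=E(\ka)$ must be checked directly as a local identity in the two quantities $r_{\mu\ka^{+}}-r_{\mu\ka}$ and $r_{(\ka^{+})'\bar\la'}-r_{\ka'\bar\la'}$; I verified that your claimed case analysis is right (when $(i^{\ast},\bar\la_{i^{\ast}})$ is a $\bar\la$-removable corner the pair is $(0,-1)$, otherwise $(1,0)$, using that all partitions in sight lie in a $k$-rectangle so that $r$ literally counts nonblocked corners, as in Lemma \ref{p-p}), and this computation is essentially the content of the paper's Claims 1 and 2, so no new difficulty is hidden there. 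Your base-case observation $r_{\bar\la\bar\la}=r_{\bar\la'\bar\la'}$ is also correct. In short: same engine, better chassis.
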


\begin{proof}
We fix $\bar\la$, and set $f_{\mu}(t) := \sum_{q} A_{\mu,\bar\la,q}t^q \in \Z[t]$.
Then the definition of $A_{\mu,\bar\la,q}$ (in the statement of Lemma \ref{g_mu_r_expand}) is transformed into the recursion formula
\begin{align*}
	f_{\bar\la}(t) &= t^{r_{\bar\la\bar\la}}, \\
	f_{\mu}(t) &= -\sum_{\substack{\mu/\ka:\text{ h.s.} \\ \bar\la\subset\ka\subsetneq\mu}} t^{r_{\mu\mu}-r_{\mu\ka}}f_\ka(t) \ \ \text{for $\mu\neq \bar\la$},
\end{align*}
and the desired result becomes the condition
\[
f_{\mu}(t) = 
\begin{cases}
	(-1)^{|\mu/\bar\la|} t^{|\mu/\bar\la|+r_{\mu'\bar\la'}} & \text{(if $\mu/\bar\la$: vertical strip)} \\
	0 & \text{(otherwise)}
\end{cases}.
\]

We prove it by induction on $|\mu|$ (for $\mu$ satisfying $\bar\la\subset\mu\subset \Rbl$).
The base case $\mu=\bar\la$ is obvious by definition.

Then we assume $\bar\la\subsetneq\mu\subset\Rbl$.
First we consider the case where $\mu/\bar\la$ is a vertical strip.
In this case we put
\begin{eqnarray*}
	\{x_1,\ldots,x_s\} &:=& \{x\mid \bar\la'_x<\mu'_x\} \ (x_1 < \cdots < x_s), \\
	a_i &:=& \mu'_{x_i} - \bar\la'_{x_i}, \\
	b_i &:=& \bar\la'_{x_i-1} - \bar\la'_{x_i} (\ge a_i)\ \text{(if $x_1=1$ set $b_1=\infty$)},
\end{eqnarray*}
\[
\begin{tikzpicture}[scale=0.22]
\draw (0,0) -| (10,3) -| (7,5) -| (4,9) -| (0,0);
\node at (4,2.5) {\footnotesize $\bar\la$};
\draw [loosely dotted,thick] (8,5.5) -- (11,3);
\draw (0,9) rectangle (1,14);
\draw (1,9) to [out=60, in=-60] node[right=1pt]{\footnotesize $a_1$} (1,14);
\draw (4,5) rectangle (5,8);
\draw (5,5) to [out=60, in=-60] node[right=1pt]{\footnotesize $a_2$} (5,8);
\draw (4,5) to [out=120, in=-120] node[left=1pt]{\footnotesize $b_2$} (4,9);
\draw (10,0) rectangle (11,2);
\draw (11,0) to [out=60, in=-60] node[right=1pt]{\footnotesize $a_s$} (11,2);
\draw (10,0) to [out=120, in=-120] node[left=1pt]{\footnotesize $b_s$} (10,3);
\end{tikzpicture}
\]
and we denote by $\ka(c_1,\ldots,c_s)$ the partition defined by
$$
\ka(c_1,\ldots,c_s)'_x = 
\begin{cases}
	\bar\la'_x + c_i & \text{if $x=x_i$ for some $i$}\\
	\bar\la'_x & \text{otherwise}
\end{cases}
$$
for $0\le c_i \le a_i$ ($1\le i\le s$).
In particular $\ka(0,\ldots,0)=\bar\la$ and $\ka(a_1,\ldots,a_s)=\mu$.

Since $|\ka(c_1,\ldots,c_s)/\bar\la| = \sum_i c_i$ and $r_{\ka(c_1,\ldots,c_s)'\bar\la'} = r_{\bar\la\bar\la} - \#\{i\mid c_i=b_i\}$,
we have
\begin{align*}
	f_{\ka(c_1,\ldots,c_s)}(t) &= (-1)^{|\ka(c_1,\ldots,c_s)/\bar\la|} t^{|\ka(c_1,\ldots,c_s)/\bar\la|+r_{\ka(c_1,\ldots,c_s)'\bar\la'}} \\
											   &= (-1)^{\sum_i c_i}\ t^{r_{\bar\la\bar\la}+\sum_i (c_i-\DE{c_i=b_i})},
\end{align*}
for $0\le c_i \le a_i$ and $(c_1,\ldots,c_s)\neq(a_1,\ldots,a_s)$, by the induction hypothesis.

For $S\subset \{1,\ldots,s\}$, we set 
$\ka(S) = \ka(a_1-\de\left[1\in S\right],\ldots,a_s-\de\left[s\in S\right])$.
Then 
\[
	\begin{cases}
		\bar\la\subset\ka\subsetneq\mu\\
		\mu/\ka \text{: horizontal strip}
	\end{cases}
	\iff
	\ka = \ka(S) \text{\ for $\emptyset\neq\exists S \subset \{1,\ldots,s\}$}.
\]
Therefore,
\begin{align*}
	f_{\mu}(t) &= -\sum_{\substack{\mu/\ka\text{: h.s.} \\ \bar\la\subset\ka\subsetneq\mu}} t^{r_{\mu\mu}-r_{\mu\ka}}f_\ka(t) \\
       &= - \sum_{\substack{\emptyset\neq S \subset\{1,2,\ldots,s\} }} t^{r_{\mu\mu}-r_{\mu\ka(S)}}f_{\ka(S)}(t)\\
&=-t^{r_{\mu\mu}-r_{\mu\ka(\{1\})}}f_{\ka(\{1\})}(t) \\
&\phantom{=} -\sum_{\emptyset\neq T\subset\{2,\ldots,s\}} 
\bigg(\underbrace{t^{r_{\mu\mu}-r_{\mu\ka(T)}}f_{\ka(T)}(t)
+ t^{r_{\mu\mu}-r_{\mu\ka(\{1\}\cup T)}}f_{\ka(\{1\}\cup T)}(t)}_{(X)}\bigg).
\end{align*}

In fact it can be proved that $(X)=0$ by the following Claim 1 and Claim 2.

\vspace{2mm}
\noindent{\bf Claim 1.}
$$r_{\mu,\ka(\{1\}\cup T)} = r_{\mu,\ka(T)} - \DE{a_1<b_1} \ 
\text{(for all $T \subset \{2,\ldots,s\}$)}.$$

\noindent{\it Proof of Claim 1:}
Reduced to next lemma:

\begin{lemm}\label{p-p}
	Let $\gamma\subset\beta$ and 
	$y=(r,c)$ be an addable corner of $\gamma$.
	Put $\ti\ga=\ga\cup\{y\}$.
	Assume that $\ti\ga_1+l(\ti\ga)\le k+1$ and $y$ is $\beta$-nonblocked.
	Then
	\[
	r_{\beta\ti\gamma}-r_{\beta\gamma}=
	\begin{cases}
		0 & (\text{if $(r,c-1)$ is a $\beta$-nonblocked removable corner of $\ga$}),\\
		1 & (\text{otherwise}).
	\end{cases}
\]
\end{lemm}
\begin{proof}[Proof of Lemma \ref{p-p}]
Note that $r_{\beta\ti\ga}=\#\{\text{$\beta$-nonblocked $\ti\ga$-removable corners}\}$ since $\ti\ga\subset\Rbl$,
and the same equality holds for $r_{\beta\ga}$.

If $z$ is a $\beta$-blocked (resp. nonblocked) removable corner of $\ga$ other than $(r-1,c)$ or $(r,c-1)$, 
then $z$ is also $\beta$-blocked (resp. nonblocked) removable corner of $\ti\ga$, and vice versa.
Note that
\begin{itemize}
	\item $y=(r,c)$ is a $\beta$-nonblocked removable corner of $\ti\ga$, and not in $\ga$.
	\item $(r,c-1)$ is not a removable corner of $\ti\ga$.
	\item $(r-1,c)$ is not a removable corner of $\ti\ga$.
	Even if $(r-1,c)$ is a removable corner of $\ga$, 
	it is $\beta$-blocked.
\end{itemize}
Hence we conclude
\[
r_{\beta,\ti\ga}-r_{\beta,\ga}=
\begin{cases}
	0 & \text{(if $(r,c-1)$ is a $\beta$-nonblocked removable corner of $\ga$)}, \\
	1 & \text{(otherwise)}.
\end{cases}
\]
	\begin{center}
\begin{tabular}{c|cccc}
	\raisebox{5mm}{
	\begin{minipage}{.21\textwidth}
		{boundary of $\tilde\gamma$ around $y=(r,c)$} 
	\end{minipage}
	}&
\begin{tikzpicture}[scale=0.25]
	\draw (-1,1) -| (1,-1);
	\draw (0,1) |- (1,0);
	\draw (0.5,0.5) to [out=90,in=190] (2,2) node[right]{$y$};
	\draw[dotted,thick] (-1,1)--(-2,2);
	\draw[dotted,thick] (1,-1)--(2,-2);
\end{tikzpicture}
&
\begin{tikzpicture}[scale=0.25]
	\draw (-1,1) -| (1,0);
	\draw (0,1) |- (2,0);
	\draw (0.5,0.5) to [out=90,in=190] (2,2) node[right]{$y$};
	\draw[dotted,thick] (-1,1)--(-2,2);
	\draw[dotted,thick] (2,0)--(3,-1);
\end{tikzpicture}
&
\begin{tikzpicture}[scale=0.25]
	\draw (0,1) -| (1,-1);
	\draw (0,2) |- (1,0);
	\draw (0.5,0.5) to [out=90,in=190] (2,2) node[right]{$y$};
	\draw[dotted,thick] (0,2)--(-1,3);
	\draw[dotted,thick] (1,-1)--(2,-2);
\end{tikzpicture}
&
\begin{tikzpicture}[scale=0.25]
	\draw (0,1) -| (1,0);
	\draw (0,2) |- (2,0);
	\draw (0.5,0.5) to [out=90,in=190] (2,2) node[right]{$y$};
	\draw[dotted,thick] (0,2)--(-1,3);
	\draw[dotted,thick] (2,0)--(3,-1);
\end{tikzpicture}
\\[4pt]
$r_{\beta\ti\ga}-r_{\beta\ga}$ &
{\small $\DE{(r+1,c-1)\notin\beta}$} &
{\small $\DE{(r+1,c-1)\notin\beta}$} &
$0$ &
$0$
\end{tabular}
\end{center}

\end{proof}

\vspace{2mm}
\noindent{\bf Claim 2.}
$$
f_{\ka(\{1\}\cup T)}(t) 
	 = - f_{\ka(T)}(t)\cdot t^{-\DE{a_1<b_1}}
$$
for $\emptyset\neq T\subset\{2,\ldots,s\}$ 

\noindent{\it Proof of Claim 2:}
Put $a'_i=a_i-\de\left[i\in T\right]$, then
\begin{align*}
f_{\ka(\{1\}\cup T)}(t) &=
	(-1)^{|\mu/\bar\la|-|T|-1} t^{r_{\bar\la\bar\la}+(a_1-1)+\sum_{i>1} (a'_i-\DE{a'_i=b_i})} \\
	&= - (-1)^{|\mu/\bar\la|-|T|} t^{r_{\bar\la\bar\la}+(a_1-\DE{a_1=b_1})-\DE{a_1<b_1}+\sum_{i>1} (a'_i-\DE{a'_i=b_i})} \\
	&= - f_{\ka(T)}(t)\cdot t^{-\DE{a_1<b_1}}.
\end{align*}
\rightline{({\it End of the proof of Claim 2})}

Hence,
\begin{align*}
f_{\mu}(t) &= -t^{r_{\mu\mu}-r_{\mu\ka(\{1\})}}f_{\ka(\{1\})}(t) \\
				       &= -t^{\DE{a_1<b_1}} \cdot (-1)^{|\mu/\bar\la|-1}t^{r_{\bar\la\bar\la}+\sum_i(a_i-\DE{a_i=b_i})-\DE{a_1<b_1}} \\
	   &= (-1)^{|\mu/\bar\la|} t^{r_{\bar\la\bar\la}+\sum_i(a_i-\DE{a_i=b_i})}\\
	   &= (-1)^{|\mu/\bar\la|} t^{|\mu/\bar\la|+r_{\mu'\bar\la'}}.
\end{align*}
This completes the proof in the case where $\mu/\bar\la$ is a vertical strip.

Next we consider the case where $\mu/\bar\la$ is {\it not} a vertical strip.

We take the same $x_i$, $a_i$, $b_i$ ($1\le i\le s$) as above (in this case we have $a_i>b_i$ for some $i$),
and $\ka(c_1,\ldots,c_s)$
($0\le c_i \le a_i, 1\le i \le s$, so long as adding $c_i$ cells on top of the $x_i$-th column of $\bla$, for all $i$,
 yields a Young diagram of a partition)
and $\ka(S)$ ($S\subset\{1,2,\dots,s\}$ but bound by the same restriction).

Notice that $\ka(c_1,\ldots,c_s)/\bar\la$ is a vertical strip if and only if
$c_i\le b_i$ for all $i$.

Now we have
\begin{align*}
	f_{\mu}(t) &= -\sum_{\substack{\mu/\ka\text{: h.s.} \\ \bar\la\subset\ka \\ \ka\neq\mu}} t^{r_{\mu\mu}-r_{\mu\ka}}f_\ka(t). \\
\intertext{
	By the induction hypothesis, we have $f_{\ka}(t)=0$ unless $\ka/\bla$ is a vertical strip.
	Since $\mu/\ka$ must be a horizontal strip, $\ka$ must have the form $\ka(S)$.
	Therefore
}
	&= - \sum_{\substack{\emptyset\neq S \subset\{1,2,\ldots,s\} \\ a_i-\DE{i\in S} \le b_i \ (\forall i)}} 
	t^{r_{\mu\mu}-r_{\mu\ka(S)}}f_{\ka(S)}(t)\\
\intertext{
	If there exists some $i$ such that $a_i>b_i+1$, 
	then $f_\mu(t)=0$ since it is equal to an empty sum.
	So we assume that there is no such $i$.
	We set $U := \{i\in \{1,\ldots,s\}\mid a_i = b_i+1\} \neq \emptyset$.
	It is easily seen that $1\notin U$.
	Then
}
	&= - \sum_{\substack{U\subset S \subset\{1,2,\ldots,s\} }} 
	t^{r_{\mu\mu}-r_{\mu\ka(S)}}f_{\ka(S)}(t)\\
&= -\sum_{U\subset T\subset\{2,\ldots,s\}} 
\bigg(\underbrace{t^{r_{\mu\mu}-r_{\mu\ka(T)}}f_{\ka(T)}(t)
+ t^{r_{\mu\mu}-r_{\mu\ka(\{1\}\cup T)}}f_{\ka(\{1\}\cup T)}(t)}_{(X)}\bigg)\\
&= 0 \quad\text{(since $(X)=0$ by the same reason as the above case)}.
\end{align*}
\end{proof}

\noindent{\it Remark.}
By Lemma \ref{A_la_q}, Lemma \ref{g_mu_r_expand}(1), say, can be rewritten as:
\[
	\kks{\la} = \sum_{\substack{\mu \text{ s.t.}\\ \bar\la\subset\mu\subset \Rbl \\ \mu/\bar\la \text{: v.s.}}} (-1)^{|\mu/\bar\la|} \kks{\mu} \sum_{i\ge 0}
	\binom{(|\mu/\bar\la|+r_{\mu'\bar\la'})+i-1}{i} h_{r-|\mu/\bar\la|-i}.
\]

\begin{bibdiv}
\begin{biblist}
\bib{Lam08}{article}{ 
   author={Lam, Thomas},
   title={Schubert polynomials for the affine Grassmannian},
   journal={J. Amer. Math. Soc.},
   volume={21},
   date={2008},
   number={1},
   pages={259--281},
}
\bib{MR1950481}{article}{
   author={Lapointe, Luc},
   author={Lascoux, Alain},
   author={Morse, Jennifer},
   title={Tableau atoms and a new Macdonald positivity conjecture},
   journal={Duke Math. J.},
   volume={116},
   date={2003},
   number={1},
   pages={103--146},
}

\bib{MR1851953}{article}{
   author={Lascoux, Alain},
   title={Ordering the affine symmetric group},
   conference={
      title={Algebraic combinatorics and applications (G\"o\ss weinstein,
      1999)},
   },
   book={
      publisher={Springer, Berlin},
   },
   date={2001},
   pages={219--231},
}
\bib{MR3379711}{collection}{ 
   author={Lam, Thomas},
   author={Lapointe, Luc},
   author={Morse, Jennifer},
   author={Schilling, Anne},
   author={Shimozono, Mark},
   author={Zabrocki, Mike},
   title={$k$-Schur functions and affine Schubert calculus},
   series={Fields Institute Monographs},
   volume={33},
   publisher={Springer, New York; Fields Institute for Research in
   Mathematical Sciences, Toronto, ON},
   date={2014},
   pages={viii+219},
}
\bib{MR2741963}{article}{ 
   author={Lam, Thomas},
   author={Lapointe, Luc},
   author={Morse, Jennifer},
   author={Shimozono, Mark},
   title={Affine insertion and Pieri rules for the affine Grassmannian},
   journal={Mem. Amer. Math. Soc.},
   volume={208},
   date={2010},
   number={977},
   pages={xii+82},
   isbn={978-0-8218-4658-2},
}
\bib{MR2079931}{article}{ 
   author={Lapointe, Luc},
   author={Morse, Jenniger},
   title={Order ideals in weak subposets of Young's lattice and associated
   unimodality conjectures},
   journal={Ann. Comb.},
   volume={8},
   date={2004},
   number={2},
   pages={197--219},
}
\bib{MR2167475}{article}{ 
   author={Lapointe, Luc},
   author={Morse, Jennifer},
   title={Tableaux on $k+1$-cores, reduced words for affine permutations,
   and $k$-Schur expansions},
   journal={J. Combin. Theory Ser. A},
   volume={112},
   date={2005},
   number={1},
   pages={44--81},
}
\bib{MR2331242}{article}{ 
   author={Lapointe, Luc},
   author={Morse, Jennifer},
   title={A $k$-tableau characterization of $k$-Schur functions},
   journal={Adv. Math.},
   volume={213},
   date={2007},
   number={1},
   pages={183--204},
}
\bib{MR2923177}{article}{
   author={Lam, Thomas},
   author={Shimozono, Mark},
   title={From quantum Schubert polynomials to $k$-Schur functions via the
   Toda lattice},
   journal={Math. Res. Lett.},
   volume={19},
   date={2012},
   number={1},
   pages={81--93},
}
\bib{MR2660675}{article}{ 
   author={Lam, Thomas},
   author={Schilling, Anne},
   author={Shimozono, Mark},
   title={$K$-theory Schubert calculus of the affine Grassmannian},
   journal={Compos. Math.},
   volume={146},
   date={2010},
   number={4},
   pages={811--852},
}
\bib{MR1354144}{book}{
   author={Macdonald, Ian G.},
   title={Symmetric functions and Hall polynomials},
   series={Oxford Mathematical Monographs},
   edition={2},
   publisher={The Clarendon Press, Oxford University Press, New York},
   date={1995},
}
\bib{Morse12}{article}{ 
   author={Morse, Jennifer},
   title={Combinatorics of the $K$-theory of affine Grassmannians},
   journal={Adv. Math.},
   volume={229},
   date={2012},
   number={5},
   pages={2950--2984},
}
\bib{Takigiku}{article}{ 
   author={Takigiku, Motoki},
   title={On some factorization formulas of $K$-$k$-Schur functions II},
}
\bib{MasterThesis}{article}{
   author={Takigiku, Motoki},
   title={On some factorization formulas of $K$-$k$-Schur functions},
   journal={Master's thesis at University of Tokyo},
}
\end{biblist}
\end{bibdiv}

\end{document}